\newtheorem{theorem}{Theorem}
\newtheorem{corollary}[theorem]{Corollary}
\newtheorem{lemma}[theorem]{Lemma}
\newtheorem{proposition}[theorem]{Proposition}
\newtheorem{definition}{Definition}
\newtheorem{remark}{Remark}
\theoremstyle{remark}
\theoremstyle{definition}
\newcommand{\rth}{\mathbb{R}^3}
\newcommand*{\toccontents}{\@starttoc{toc}}
\begin{document}
\title[The Landau equation with the specular boundary condition]{The Landau equation with the specular reflection boundary condition}

\author{Yan Guo}
\address{Brown University, Providence RI 02912, USA }
\email{yan\_guo@brown.edu}

\author{Hyung Ju Hwang}
\address{Department of Mathematics, POSTECH, Pohang 37673, Republic of Korea }
\email{hjhwang@postech.ac.kr}

\author{Jin Woo Jang}
\address{Center for Geometry and Physics, Institute for Basic Science (IBS), Pohang 37673, Republic of Korea }
\email{jangjinw@ibs.re.kr}

\author{Zhimeng Ouyang}
\address{Brown University, Providence RI 02912, USA }
\email{zhimeng\_ouyang@brown.edu}
\let\svthefootnote\thefootnote
\let\thefootnote\relax\footnotetext{2010 \textit{Mathematics Subject Classification.} Primary: 35Q84, 35Q20, 82C40, 35B45, 34C29,	35B65.\\
	\textit{Key words and phrases.}  Landau equation, Boltzmann equation, Kinetic Fokker-Planck equation,  Collisional kinetic theory, Specular reflection.}
	\addtocounter{footnote}{-1}\let\thefootnote\svthefootnote
\newcommand{\eqdef  }{\overset{\mbox{\tiny{def}}}{=}}
\thispagestyle{empty}
\begin{abstract}The existence and stability of the \textit{Landau equation} (1936) in a general bounded domain with a physical boundary condition is a long-outstanding open problem. This work proves the global stability of the \textit{Landau equation} with the Coulombic potential in a general smooth bounded domain with the \textit{specular reflection} boundary condition for initial perturbations of the Maxwellian equilibrium states. The highlight of this work also comes from the low-regularity assumptions made for the initial distribution. This work generalizes the recent global stability result for the \textit{Landau equation} in a periodic box \cite{2016arXiv161005346K}. Our methods consist of the generalization of the wellposedness theory for the \textit{Fokker-Planck equation}  \cite{hwang2014fokker, MR3788197} and the extension of the boundary value problem to a whole space problem, as well as the use of a recent extension of \textit{De Giorgi-Nash-Moser} theory for the \textit{kinetic Fokker-Planck equations} \cite{golse2016harnack} and the \textit{Morrey} estimates \cite{polidoro1998sobolev} to further control the velocity derivatives, which ensures the uniqueness.
Our methods provide a new understanding of the grazing collisions in the Landau theory for an initial-boundary value problem.
\end{abstract}
\maketitle
\tableofcontents 
 \section{Introduction}\label{section: introduction}
The  \textit{Landau equation}, which was proposed by Landau in 1936, is a fundamental mathematical model that describes collisions among charged particles interacting via Coulombic force.  Especially, this model describes the dynamics of a large number of particles when all collisions tend to be grazing. 
The equation takes the form of 
 \begin{equation}\label{Landau}
\partial_t F + v\cdot\nabla_{\!x} F = Q(F,F),
 \end{equation}
 where the unknown $F=F(t,x,v)$ is a non-negative function. For each time $t\geq 0$, $F(t,\cdot,\cdot)$ represents the density of particles in phase space. Throughout this research, the spatial coordinates are $x\in \Omega \subset\rth$ and the velocities are $v\in\rth$, where $\Omega \subset \rth$ is a bounded domain. The \textit{Landau collision operator} $Q$ is defined as 
 \begin{align*}
 Q(F,G)(v) \eqdef & \;\nabla_v\cdot\left\{\int_{\mathbb{R}^3}\phi(v\!-\!v')\big[F(v')\nabla_{\!v} G(v)-G(v)\nabla_{\!v} F(v')\big]dv' \right\},
 \end{align*}
 where the collision kernel for the Coulombic particle interactions
 \begin{align*}
 \phi(z) \eqdef & \left\{I-\frac{z}{|z|}\otimes\frac{z}{|z|}\right\}\cdot|z|^{-1},
 \end{align*} is a symmetric and non-negative matrix such that $\phi_{ij}(z)z_iz_j=0$. 

The \textit{Landau equation} is a limit case of the \textit{Boltzmann equation}$$
\partial_t F + v\cdot\nabla_{\!x} F = Q_B(F,F),$$ with the \textit{Boltzmann collision operator}
\begin{align*}
 Q_B(F,G)(v) \eqdef \int_{\mathbb{R}^3}dv_*\int_{\mathbb{S}^2}d\sigma\ B(v-v_*,\sigma)\big[F(v')G(v'_*)-F(v)G(v_*)\big],
 \end{align*} in the sense of the phenomenological arguments by Landau that solutions to the \textit{Boltzmann equation} tend to solutions of the \textit{Landau equation} if all collisions tend to be grazing; in order words, $B(v-v_*,\sigma)$ tends to be more singular for $\langle v-v_*,\sigma\rangle\approx0$. The examples of such physical potentials include the inverse-power-law potential and other long-range potentials, whose dynamics can further be described via the
\textit{Boltzmann equation} without the classical \textit{angular cutoff} assumptions; i.e., $B(v-v_*,\cdot)\notin L^1_{loc}(\mathbb{S}^2)$. It is worth mentioning that the dynamics that the \textit{Landau equation} describes are different from those of the \textit{Boltzmann equation} with the classical \textit{angular cutoff} in the sense that the former describes the dynamics of grazing collisions whereas the latter describes the dynamics neglecting grazing collisions. 
More detailed study of connection with the \textit{Boltzmann equation} is given in the literature
\cite{MR0434265, MR996783,  MR1167768,MR1165528}.

The \textit{Landau equation} is very interesting not just in itself but also in that we hope we can have a better understanding on the
\textit{Boltzmann equation} without the classical \textit{angular cutoff} assumptions in the case when grazing collisions are not neglected.

\subsection{Historical remarks on the well-posedness theory}The well-posedness theory for the \textit{Landau equation} is strongly related to \textit{a priori} estimates on the non-linear \textit{Landau collision operator} $Q(F,F)$ and the regularity conditions of the solutions. Unfortunately, the only ``easily-granted" \textit{a priori} estimates that one can expect from the  \textit{Landau collision operator} are the physical $L^1$-type conservation laws. Historically, this difficulty that arises from the lack of strong \textit{a priori} estimates of the solutions has been resolved via the following techniques:
\begin{itemize}
\item (Spatially homogeneous equation) The brief list of the results that considered the spatially homogeneous situation includes \cite{ MR3884792, MR3375485, 2018arXiv180608720S, desvillettes2015entropy, MR2514370, MR2745513, MR3365830, MR3407515,  MR3158719, MR1737548, MR1737547, MR2502525, MR2718931, MR3599518, MR3158719, MR1650006, MR1646502, MR1750572, MR2288534}. 
\item (Renormalized equation) One can also consider renormalizing the equation and obtain additional \textit{a priori} estimates. The brief list includes  \cite{MR972541,MR1392006,MR1278244,MR1014927,MR1127927 }. 
\item (Linearized equation nearby the Maxwellian equilibrium) The list of results includes \cite{carrapatoso2016cauchy, chen2009smoothing, guo2002landau,  2016arXiv161005346K, ha2015l2, herau2011anisotropic, herau2013global, strain2006almost, liu2014regularizing, strain2013vlasov,luo2016spectrum}. 
\end{itemize}
Unfortunately, all of the results listed above are the wellposedness theory on the simple torus or the whole space. To the best of authors' knowledge, there is no global wellposedness result yet on the initial-boundary value problem for the \textit{Landau equation} in a general bounded domain with nontrivial physical boundary conditions.

\subsection{Introduction to an initial-boudary value problem for the Landau equation}
Our main concern in this paper is on the global wellposedness and the decay of the weak solution to the \textit{Landau equation} in a general bounded domain with the \textit{specular reflection} boundary conditions in the nearby-equilibrium setting without employing high order Sobolev norms. One may say that the \textit{Landau equation}, which models the behavior of charged particles, is intrinsically an equation for the particles in a bounded domain. In spite of the importance of the theory for the initial-boundary value problem, no global wellposedness theory has been developed for nontrivial physical boundary conditions due to the difficulties arising nearby the boundary.
 When the particles approach the boundary of a bounded domain $\Omega\subset \rth$, we must impose a specific boundary condition for the probability density function $f(t,x,v)$. The boundary assumptions that we impose are as follows.
 \subsubsection{Boundary conditions}
Throughout this paper, our domain $\Omega=\{x:\zeta(x)<0\}$ is connected and bounded with $\zeta(x)$ being a smooth function. We also assume that $\nabla\zeta(x) \neq 0$ at the boundary $\zeta(x)=0.$ We define the outward normal vector $x(x)$ on the boundary $\partial \Omega$ as 
 \begin{equation}\label{out}
 n_x\eqdef  \frac{\nabla \zeta(x)}{|\nabla\zeta(x)|}.
 \end{equation}
We say that $\Omega$ has a rotational symmetry if there exist vectors $x_0$ and $\omega$ such that 
 \begin{equation}\label{rot}
 \{(x-x_0)\times \omega\}\cdot n_x=0,
 \end{equation}for all $x\in\partial \Omega$.
 
 Throughout this paper, we will denote the phase boundary of $\Omega\times \rth$ as $\gamma\eqdef  \partial\Omega\times \rth.$ Additionally we split this boundary into an outgoing boundary $\gamma_+$, an incoming boundary $\gamma_-$, and a singular boundary $\gamma_0$ for grazing velocities, defined as
 \begin{equation}
 \begin{split}
 \gamma_+&\eqdef  \{(x,v)\in\Omega\times \rth :n_x\cdot v >0\},\\
 \gamma_-&\eqdef  \{(x,v)\in\Omega\times \rth :n_x\cdot v <0\},\\
 \gamma_0&\eqdef  \{(x,v)\in\Omega\times \rth :n_x\cdot v =0\}.
 \end{split}
 \end{equation}

 In terms of the probability density function $F$, we formulate the \textit{specular reflection boundary condition} as \begin{equation}\label{specular}
 F(t,x,v)|_{\gamma_-}=F(t,x,v-2n_x(n_x\cdot v))=F(t,x,R_xv),
 \end{equation} for all $x\in\partial \Omega$
 where $$R_xv\eqdef  v-2n_x(n_x\cdot v).$$

 \subsection{Linearization and conservation laws}
 We will study the linearization of \eqref{Landau} around the Maxwellian equilibrium state
 \begin{equation}\label{perturbation}
 F(t,x,v) = \mu(v) + \mu^{1/2}(v)f(t,x,v),
\end{equation}where without loss of generality $$
 \mu(v) = (2\pi)^{-3/2}e^{-\frac{|v|^2}{2}}.
$$
 It is well-known that under the specular reflection boundary condition (\ref{specular}), both mass and energy are conserved for the \textit{Landau equation} (\ref{Landau}). Without loss of generality, we assume the mass-energy conservation laws hold for $t\geq 0$ in terms of the perturbation $f$:
 \begin{equation}
 \label{conservation laws}
 \int_{\Omega\times\rth}f(t,x,v)\sqrt{\mu}dxdv=
 \int_{\Omega\times\rth}|v|^2f(t,x,v)\sqrt{\mu}dxdv=0.
 \end{equation}
 Additionally, we further assume a corresponding conservation law of angular momentum for all $t\geq 0$ if the domain has a rotational symmetry:
 \begin{equation}
 \label{angcon}
 \int_{\Omega\times\rth}\{(x-x_0)\times \omega\}\cdot v f(t,x,v)\sqrt{\mu}dxdv=0.
 \end{equation}

\subsection{Main theorem and our strategy}
We may now state our main result as follows:
\begin{theorem}
[Main theorem]\label{Thm : main result} There exist $\vartheta^{\prime}$ and 
$0<\varepsilon_{0}\ll 1$ such that for some $\vartheta \ge \vartheta^{\prime}$ if $f_{0}$ satisfies
\begin{equation}
\label{Eq : initial condition}
	\|f_{0}\|_{\infty,\vartheta} \le \varepsilon_0, \quad \|f_{0t}\|_{\infty, \vartheta} + \|D_v f_0\|_{\infty, \vartheta}  < \infty,
\end{equation}
where $f_{0t} \eqdef -v\cdot \nabla_x f_{0} + \bar A_{f_{0}}f_{0}$ and $\bar{A}_g$ is defined in \eqref{A_g}.
\begin{itemize}
\item (Existence and Uniqueness) Then there exists a unique weak solution $f$ of \eqref{linearized eq},
\eqref{initial}, and \eqref{specular} on $(0,\infty)\times\Omega \times\mathbb{R}^{3}$.
\item (Positivity) Let $F(t,x,v) = \mu(v) + \sqrt \mu(v) f(t,x,v)$.
If $F(0) \ge 0$, then $F(t) \ge 0$ for every $t\ge 0$.

\item (Decay of solutions in $L^2$ and $L^\infty$) Moreover, for any $t>0$, $\vartheta_{0}\in\mathbb{N}$, and $\vartheta\ge \vartheta'$, there exist $C_{\vartheta, \vartheta_{0}}>0$ and
$l_0(\vartheta_0)>0$ such that $f$ satisfies
\begin{equation*}
\label{Eq : energy estimate}
	\sup_{0 \le s\le\infty}\mathcal{E}_{\vartheta}(f(s)) \le C 2^{2\vartheta}\mathcal{E}_{\vartheta}(0),
\end{equation*}
\begin{equation*}
\label{Eq : decay estimate}
	\|f(t)\|_{2,\vartheta} \le C_{\vartheta,\vartheta_{0}} \mathcal{E}_{\vartheta+\vartheta_{0}/2}(0)^{1/2}\left(  1+ \frac{t}{\vartheta_{0}}\right)  ^{-\vartheta_{0}/2},
\end{equation*}and
\begin{equation*}
\label{Eq : L^infty estimate}
	\|f(t)\|_{\infty, \vartheta} \le C_{\vartheta,\vartheta_{0}}(1+t)^{-\vartheta_{0}}\|f_{0}\|_{\infty,\vartheta+l_{0}}.
\end{equation*}
\item (Boundedness in $C^{0,\alpha}$ and $W^{1,\infty}$) In addition, there exist $C>0$ and $0<\alpha<1$ such that $ f$ satisfies
\begin{equation*}
\label{Eq : Holder}
	\|f\|_{C^{0,\alpha}\big((0,\infty)\times\Omega \times\mathbb{R}^{3})\big)} \le C\left(\|f_{0t}\|_{\infty, \vartheta} + \|f_{0}\|_{\infty, \vartheta}\right),
\end{equation*}
and
\begin{equation*}
\label{Eq : D_v f bddness}
	\|D_{v} f\|_{L^{\infty}\big((0,\infty)\times\Omega \times\mathbb{R}^{3})\big)}  \le C\left(\|f_{0t}\|_{\infty, \vartheta} + \|D_v f_0\|_{\infty, \vartheta} + \|f_{0}\|_{\infty, \vartheta}\right).
\end{equation*}
\end{itemize}
\end{theorem}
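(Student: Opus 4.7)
The plan is to reduce the nonlinear boundary value problem to a sequence of linear kinetic Fokker-Planck problems on the whole space. Writing the perturbation equation in the form $\partial_t f + v\cdot\nabla_x f = \bar A_f f$, I would first freeze the nonlinear coefficient and study, for a given $g$, the linear equation $\partial_t f + v\cdot\nabla_x f = \bar A_g f$ with initial data \eqref{initial} and the specular boundary condition \eqref{specular}. Because $R_x$ is an isometric involution of velocities across $\partial\Omega$, the specular boundary condition can be converted into a symmetry constraint, allowing a solution on $\Omega\times\mathbb{R}^3$ to be extended across $\partial\Omega$ to a whole-space problem. For that whole-space linear problem I would invoke the Fokker-Planck wellposedness machinery from \cite{hwang2014fokker, MR3788197}. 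Then a contraction-mapping argument in the weighted $L^\infty_{x,v}$ norm $\|\cdot\|_{\infty,\vartheta}$ closes on the nonlinear equation, using the smallness $\|f_0\|_{\infty,\vartheta}\le\varepsilon_0$ and the coercive structure of the linearized Landau operator. Positivity of $F$ is propagated by a maximum-principle argument on the extension, since specular reflection preserves the sign.

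For the quantitative decay, I would first establish an $L^2$ energy inequality by testing against $\langle v\rangle^{2\vartheta}f$. The coercivity of the linearized Landau operator survives the reflection extension, and the conservation laws \eqref{conservation laws}--\eqref{angcon} eliminate the hydrodynamic nullspace, yielding a dissipation that, via an interpolation between higher weights and the basic $L^2$ bound, produces the polynomial decay $\|f(t)\|_{2,\vartheta}\lesssim \mathcal{E}_{\vartheta+\vartheta_0/2}(0)^{1/2}(1+t/\vartheta_0)^{-\vartheta_0/2}$. The $L^\infty$ decay is obtained via an $L^2\to L^\infty$ bootstrap along the specular characteristics, of the type introduced by Guo for Boltzmann-type equations in bounded domains, with the integrated Duhamel representation closed through the weighted norm and the already-established $L^2$ polynomial decay.

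The main obstacle is the regularity portion, namely $f\in C^{0,\alpha}$ and $D_v f\in L^\infty$. For the Hölder estimate I would apply the De Giorgi--Nash--Moser theory for kinetic Fokker-Planck equations from \cite{golse2016harnack} to the extended whole-space equation; the delicate point is that the reflection extension must be compatible with the local ellipticity in $v$ and with the transport structure required by that theory, and one must ensure that trajectories grazing $\partial\Omega$ (i.e.\ those hitting $\gamma_0$) do not accumulate singularities. Once a Hölder modulus is available, the Morrey-type embedding for kinetic Kolmogorov operators from \cite{polidoro1998sobolev} upgrades the already-controlled diffusive derivatives into an $L^\infty$ bound on $D_v f$ in terms of $\|f_{0t}\|_{\infty,\vartheta}$, $\|D_v f_0\|_{\infty,\vartheta}$, and $\|f_0\|_{\infty,\vartheta}$ (the term $f_{0t}$ enters precisely because one differentiates in $t$ to gain a source with the right weight).

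Finally, uniqueness of the weak solution constructed above is \emph{not} a by-product of the contraction argument alone, because the fixed point was taken in the class of weak solutions with bounded weighted $L^\infty$ norm, and the quadratic dependence of $\bar A_f$ on $f$ would normally require controlling one velocity derivative to close the energy identity for the difference of two weak solutions. The $D_v f\in L^\infty$ bound from the Morrey step supplies exactly this missing Lipschitz-in-$v$ control of the coefficient, so a Grönwall argument applied to the $L^2$ norm of $f_1-f_2$ closes and delivers uniqueness. I expect the reflection-extension/De Giorgi step for the grazing set $\gamma_0$ to be the most technically demanding piece, since the specular characteristics lose smoothness there and the kinetic De Giorgi theory is naturally stated in the whole space.
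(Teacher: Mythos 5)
Your high-level architecture matches the paper's: freeze the coefficient to get a linear kinetic Fokker--Planck problem; boot\-strap $L^2 \to L^\infty \to$ H\"older $\to S^p$; recognize that uniqueness of the nonlinear weak solution genuinely requires the $D_v f \in L^\infty$ bound (not the contraction estimate alone); and use a reflection-type extension of the boundary-value problem to a whole-space one so that the De Giorgi--Nash--Moser and Morrey machinery from \cite{golse2016harnack, polidoro1998sobolev} can be invoked. You also correctly flag $\gamma_0$ and the loss of smoothness of the characteristics there as the central technical obstruction, which is precisely what the paper points out.

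Where you depart from the paper, and where I see a real gap, is in the construction of the weak solution to the \emph{linear} problem $\partial_t f + v\cdot\nabla_x f = \bar{A}_g f$ with specular boundary condition. You propose to extend across $\partial\Omega$ \emph{first} and then ``invoke the Fokker-Planck wellposedness machinery from \cite{hwang2014fokker, MR3788197}.'' The paper does not do this, and for good reasons the order cannot simply be reversed. The boundary-flattening map is only \emph{locally} defined near $\partial\Omega$, so the mirror extension does not directly produce a single whole-space equation without a partition of unity and attendant commutator terms; the extended first-order coefficient $\mathbb{B}$ is genuinely \emph{discontinuous} across $\{y_3' = 0\}$, which is harmless for the hypoelliptic De Giorgi estimates used later but is not covered by the classical machinery you would be invoking to construct the solution; and the cited references \cite{hwang2014fokker, MR3788197} are bounded-domain theories for the Fokker--Planck operator with absorbing (not specular) boundary conditions, so they are a template rather than a black box. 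The paper instead builds the linear weak solution \emph{directly} on $\Omega\times\mathbb{R}^3$: it truncates the transport term near the grazing set, replaces the diffusion operator by a discrete integral operator $Q^\varepsilon$, relaxes specular reflection to an inflow condition $\gamma_- f^n = (1-a)\mathcal{R}[\gamma_+ f^{n-1}]$, solves the regularized problem by characteristics plus a fixed-point argument, derives uniform $L^1$ and $L^\infty$ bounds by constructing smooth solutions to the \emph{adjoint} problem and proving a maximum principle, and only then passes to the limit $\varepsilon, a \to 0$, $n\to\infty$. The extension to the whole space enters exclusively at the $L^\infty$/H\"older/$S^p$ stage, where it is needed to make the interior De Giorgi lemmas applicable.

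Your $L^2$ decay sketch is also too generic at the one point where the boundary actually bites. Testing against $\langle v\rangle^{2\vartheta}f$ controls the microscopic part, but the macro--micro decomposition produces boundary integrals $\int_\gamma \psi f\, (v\cdot n_x)\, dS_x\, dv$ that must vanish identically; the paper achieves this by choosing the Burnette functions $A_j$, $B_{kl}$ as bases and by solving tailored elliptic boundary-value problems for the potentials $\phi_a$, $\phi_b$, $\phi_c$ (Neumann for $\phi_a,\phi_c$, a mixed tangential/normal condition for $\phi_b$), so that the parity of the integrand under $v \mapsto R_x v$ combined with these boundary conditions kills every boundary term. It also needs the additional angular-momentum constraint \eqref{angcon} when $\Omega$ has a rotational symmetry to eliminate a residual kernel for the $\phi_b$ system; that nullspace does not appear in the torus case and your sketch does not account for it.
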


We will now make a few comments on Theorem \ref{Thm : main result}. Our main concern throughout this paper is on the study of (low-regularity) global well-posedness for the Landau equation in
a general bounded domain with a physical boundary condition: namely, the specular reflection boundary condition. The \textit{Landau equation} has been extensively studied in either a simple periodic domain or the whole domain (See \cite{boblylev2013particle}, \cite{bobylev2015some}, \cite{carrapatoso2016cauchy}, \cite{chen2009smoothing}, \cite{desvillettes2015entropy}, \cite{guo2002landau}, \cite{ha2015l2}, \cite{herau2011anisotropic}, \cite{herau2013global}, \cite{liu2014regularizing}, \cite{luo2016spectrum}, \cite{strain2006almost}, \cite{strain2013vlasov}, and \cite{MR3158719}), and these were via employing Sobolev norms of sufficiently high orders.  However, in a bounded domain, the solutions
cannot be smooth up to the grazing set \cite{hwang2014fokker} even though we have the diffusion term in $v$ variable. Hence some new mathematical
tools involving much weaker norms must be developed. As the first step, a $L^{2}\rightarrow L^{\infty }$ framework has been
developed to construct the unique global solution in a periodic box in \cite{2016arXiv161005346K}. Our work generalizes \cite{2016arXiv161005346K} to the problem on a general bounded domain with a more physical boundary condition, the specular reflection boundary condition.

Our starting point is to linearize the \textit{Landau equation} \eqref{Landau} around the perturbation \eqref{perturbation}. Then the first step is to construct a weak solution to the \textit{linearized Landau equation} \eqref{linearized eq}, as the notion of a weak solution is no longer simple if we consider a nontrivial boundary. An alternative form of the \textit{linearized Landau equation} is given as 
\begin{equation}\label{g-eq}
\partial_t f + v\cdot\nabla_{\!x}f = \bar{A}_g f + \bar{K}_g f,\end{equation} for some given function $g$, where  $\bar{A}_g f$  consists of the terms which contain at least one momentum-derivative of $f$ while  $\bar{K}_g f$ consists of the rest. Then, in Section \ref{section: linear} through Section \ref{section: wellposedness lin}, we first consider constructing weak solutions for the linearized equation \begin{equation}\label{eq Agf only}
\partial_t f + v\cdot\nabla_{\!x}f = \bar{A}_g f,\end{equation} without the presence of the lower order term $\bar{K}_g f$. The wellposedness for the linearized equation \eqref{eq Agf only} is then obtained via regularizing the problem, constructing approximate solutions, and showing $L^1$ and $L^\infty$ estimates for the adjoint problem to the approximate problem. This method is a generalization of the work \cite{hwang2014fokker} and \cite{MR3788197} for the Fokker-Planck equation with the absorbing boundary condition.

Once we are equipped with Theorem \ref{main-thm}, i.e., the wellposedness of \eqref{eq Agf only} with the \textit{specular reflection boundary condition} in the sense of distribution, we may associate a continuous semigroup of linear and bounded operators $U(t)$ such that $f(t) = U(t)f_0$ is the unique weak solution of \eqref{eq Agf only}.
Then by the Duhamel principle, the solution $\bar{f}$ of the whole linearized equation \eqref{linearized eq} can further be written as
\begin{equation}
\label{Eq : Duhamel principle}
\bar{f}(t) = U(t)\bar{f}_{0} + \int_{0}^{t} U(t-s) \bar K_{g} \bar{f}(s) ds.
\end{equation}
After we construct the notion of solutions to the linearized Landau equation, we continue developing further estimates as in the following diagram:
\begin{figure}[H]
\begin{equation*}
\xymatrix@C=0.3pc{
	& *+[F]{\txt{\small Energy estimate\\ \small \& $L^2$ time-decay}} \ar[d]_{{\color{red} \txt{\footnotesize \emph{Boundary flattening}\\ \footnotesize \emph{+ Moser's iteration}}}}
	& *+[F--]{\txt{\small $L^2$ weak-sol.\! of  (\ref{g-eq}) :\\ \small Well-posedness\\ \small (Theorem \ref{main-thm})}} \ar[l] \ar@{=>}@(r,ul)[dr] \\
	& *+[F-:<5pt>]{\txt{$L^2\!\rightarrow\!L^\infty$ estimate}} \ar[d]_{{\color{red} \txt{\footnotesize \emph{De Giorgi's method}}}} \ar@{-->}@/^/[drr] \ar@{.>}@/_1pc/[u] \ar@{.>}[rr]^(.45){\scriptsize \big( g\eqdef f^{(m)},\;  \|g\|_{\infty}< \varepsilon  \big)}
	&
	& *+[F.:<7pt>]{\txt{\small {\color{red} Iteration argument :}\\ \small construct iterating   \\ \small $f^{(m)}$ through  (\ref{linearized-eq})}} \ar@{->>}[d]^(.46){\txt{\scriptsize Pass to the limit\\ \scriptsize (Cauchy sequence)}} \\
	& *+[F]{\txt{H\"{o}lder estimate}} \ar[d]_{\color{red} \txt{\footnotesize \emph{Linear}\\ \footnotesize \emph{$L^2$ and $L^\infty$ decay}\\ \footnotesize \emph{+ interpolation}}} \ar@{.>}@/^1pc/[d]^(.5){\txt{\scriptsize $\big( g\eqdef f^{(m)},$ \\ \scriptsize $\|g\|_{C^{0,\alpha}}\!\leq\! C  \rightarrow \|\sigma_{\!G}\|_{C^{0,\alpha}}\!\leq\! C  \big)$}}
	&
	& *+[F=]{\txt{\small Global $L^2\cap L^{\!\infty}$ weak-sol.\\ \small of  (\ref{linearized eq}) :\\ \small  Existence \& Uniqueness \\ \small (Theorem \ref{Thm : main result})}} \\
	& *+[F]{\txt{$S^p$ estimate}} \ar@{=>}[r]
	& *+[F-:<5pt>]{\txt{$L^\infty$ bound of\;$\nabla_{\!v}f$\;}} \ar@{-->}@(r,dl)[ur]}
\end{equation*}
\caption{$L^2\rightarrow L^\infty$ approach.}
\end{figure}
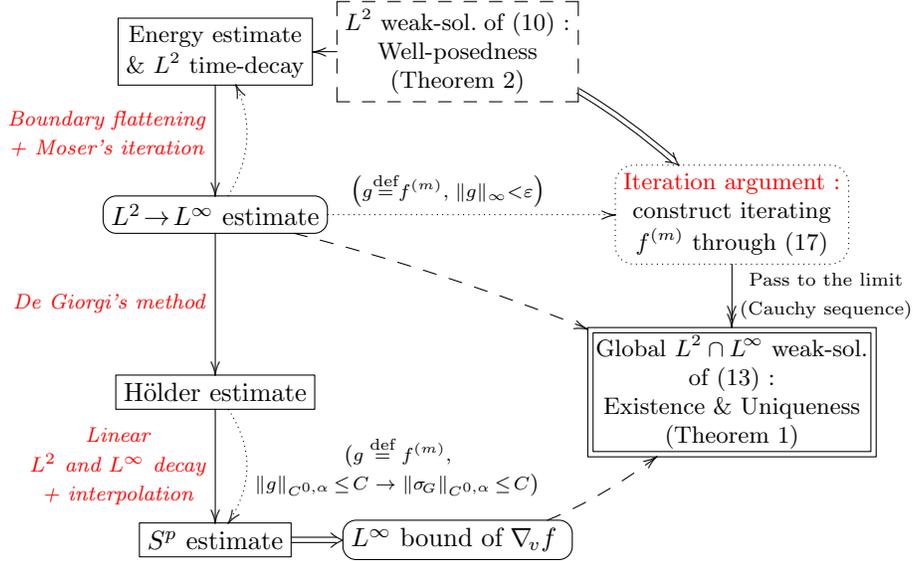
We may make a few comments on the diagram. Unfortunately, to the best of our knowledge, there is
still no construction for $L^{\infty }$ global weak solutions to the Landau
equation with nontrivial physical boundary conditions. One of the major difficulties behind the $L^\infty$ theory for the initial-boundary value problem for the \textit{Landau equation} arises from the fact that one must obtain the boundedness of a higher-order derivative norm $||\nabla_v f||_{L^\infty}$ in order to prove the uniqueness. It is worth comparing it with the $L^\infty$ wellposedness theory for the initial-boundary value problem for the \textit{Boltzmann equation} with \textit{angular cutoff} \cite{MR2679358}, which does not involve the estimates on the higher order derivative norms.  

Since our goal throughout this paper is to construct a $L^\infty$ global weak solution to the nonlinear \textit{Landau equation} with the specular-reflection boundary condition, the main approach that we take is mainly the $L^2-L^\infty$ bootstrapping after the $L^2$ energy estimates. In order for this, we first obtain the $L^2$ decay estimates in Section \ref{sec: L2decay}, motivated by the constructive method in \cite{MR3712934} where some extra efforts are put into the corresponding elliptic problem with certain boundary conditions. The key idea for the $L^2-L^\infty$ bootstrapping is to flatten the general $C^1$ boundary and to adapt the mirror extension of the boundary so that our argument can be analyzed as the one in the whole space. 

However, in order to close the wellposedness argument for the nonlinear problem, we must have the boundedness of $||\nabla_v f||_{L^\infty}$ as we discussed above. The key idea that we are based on is to follow so-called \textit{Morrey estimates} \cite{polidoro1998sobolev} (what we call as \textit{$S_p$ estimate} throughout this paper), which results in the boundedness of $||\nabla_v f||_{L^\infty}$ from the boundedness of the $C^{0,\alpha}$ norm and the $L^\infty$ bounds of the terms in the initial data. Hence we also adapt the $L^\infty-C^{0,\alpha}$ bootstrap via recently developed so-called \textit{De Giorgi}-type methods: local Harnack-type arguments from \cite{golse2016harnack}, \cite{golse2015holder}, and \cite{mouhot2015holder}, and the uniform $L^\infty-C^{0,\alpha}$ bootstrap from \cite{2016arXiv161005346K}. With all of the ingredients ready, we close the global wellposedness argument for the nonlinear \textit{Landau equation} in a bounded domain with the specular reflection boundary condition.

Regarding the \textit{De Giorgi-Nash-Moser} iteration methods, we briefly summarize the recent development. The theory for elliptic or parabolic equations in divergence form \cite{MR0093649, MR0082045, MR0170091, MR0159139, MR0100158} has been extended to the study of hypoelliptic PDEs of divergent types to obtain H\"older regularity in \cite{ MR1825690, MR1386366, MR1662349, MR2068847,  mouhot2015holder, MR2530175, MR2409660, golse2016harnack, MR2773175, MR2277064, 2016arXiv160807571I, MR2386472, MR1289901, MR1463798}. The full summary of the story is introduced in \cite{2018arXiv180800194M}.

\subsection{Notations and function spaces}
We may now introduce the notations and the function spaces that we use throughout this paper. Below is the table of notations:

\paragraph{$\bullet$ Domain:}
\begin{align*}
& \Omega\subset\mathbb{R}_x^3  \text{: bounded domain with smooth boundary} \\
& \bar{Q}_T \eqdef  [0,T] \times \bar{\Omega} \times \mathbb{R}_v^3 \;\;\ni  (t,x,v)
\end{align*}

\paragraph{$\bullet$ Phase-boundary:}
\begin{align*}
& \gamma \eqdef  \partial\Omega \times \mathbb{R}_v^3 & \\
& \gamma_\pm \eqdef  \left\{ (x,v)\in\gamma:  \pm  (v\!\cdot\! n_x) >0 \right\} & \text{outcoming/incoming set} \\
& \gamma_0 \eqdef  \left\{ (x,v)\in\gamma:  v\cdot n_x =0 \right\} & \text{grazing (singular) set} \\
&  & (n_x\!: \text{outward unit normal at } x\!\in\!\partial\Omega) \\
& \Sigma^{ T} \eqdef  [0,T] \times \gamma & \\
& \Sigma_\pm^{ T} \eqdef  [0,T] \times \gamma_\pm &
\end{align*}

\paragraph{$\bullet$ Trace:}
\begin{equation*}
\gamma f \eqdef  f|_{\gamma}, \quad \gamma_\pm f \eqdef  f|_{\gamma_\pm}
\end{equation*}

\paragraph{$\bullet$ Spaces \& norms:}
\begin{align*}
& L^p\!\left(\Sigma_\pm^{ T}\right) \eqdef  L^p \!\left( \Sigma_\pm^{ T}  ;  |v\!\cdot\! n_x|  dS_xdvdt  \right) \\[5pt]
& L^p(\gamma_\pm) \eqdef  L^p (\gamma_\pm  ;  |v\!\cdot\! n_x|  dS_xdv) \\
& \left\| \gamma_\pm f \right\|_{L^p(\gamma_\pm)}^p \eqdef  \iint_{\gamma_\pm} \left| \gamma_\pm f \right|^p |v\!\cdot\! n_x|  dS_xdv
\end{align*}
\paragraph{$\bullet$ Weighted spaces \& norms:}
\begin{equation*}		\label{weighted L^p}
w\eqdef  (1+|v|),\quad|f|_{p,\vartheta}^{p}\eqdef  \int_{\mathbb{R}^{3}}w^{p\vartheta}|f|^{p}dv,\quad\Vert f\Vert_{p,\vartheta}^{p}\eqdef  \int_{\Omega\times\mathbb{R}^{3}}w^{p\vartheta}|f|^{p}dxdv.
\end{equation*}
\begin{equation*}		\label{weighted sigma 1}
|f|_{\sigma,\vartheta}^{2}\eqdef  \int_{\mathbb{R}^{3}}w^{2\vartheta}\left[\sigma^{ij}\partial_{i}f\partial_{j}f+\sigma^{ij}v_{i}v_{j}f^{2}\right]  dv,
\end{equation*}
\begin{equation*}		\label{weighted sigma 2}
\Vert f\Vert_{\sigma,\vartheta}^{2}\eqdef  \iint_{\Omega\times\mathbb{R}^{3}}w^{2\vartheta}\left[  \sigma^{ij}\partial_{i}f\partial_{j}f+\sigma^{ij}v_{i}v_{j}f^{2}\right]  dvdx,
\end{equation*}
\begin{equation*}		\label{Rw : weighted L^infty}
|f|_{\infty,\vartheta}=\sup_{\mathbb{R}^{3}}w^{\vartheta}(v)f(v),\quad\Vert f\Vert_{\infty,\vartheta}=\sup_{\Omega\times\mathbb{R}^{3}}w^{\vartheta}(v)f(x,v).
\end{equation*}

\[%
\begin{split}
|f|_{2}\eqdef  |f|_{2,0},  &  \quad\Vert f\Vert_{2}\eqdef  \Vert f\Vert_{2,0},\\
|f|_{\sigma}\eqdef  |f|_{\sigma,0},  &  \quad\Vert f\Vert_{\sigma}\eqdef  \Vert
f\Vert_{\sigma,0},\\
|f|_{\infty}\eqdef  |f|_{\infty,0},  &  \quad\Vert f\Vert_{\infty}\eqdef  \Vert
f\Vert_{\infty,0},
\end{split}
\]%

\paragraph{$\bullet$ $L^2$ inner products \& $L^2$ energy:}
\begin{equation*}		\label{inner L^2}
\langle f,g\rangle\eqdef  \int_{\mathbb{R}^{3}}fgdv,\quad(f,g)\eqdef  \int_{\Omega\times\mathbb{R}^{3}}fgdxdv,
\end{equation*}
\begin{equation*}		\label{inner sigma 1}
\langle f,g \rangle_{\sigma}\eqdef  \int_{\mathbb{R}^{3}}\left[\sigma^{ij}\partial_{i}f\partial_{j}g+\sigma^{ij}v_{i}v_{j}fg\right]  dv,
\end{equation*}
\begin{equation*}		\label{inner sigma 2}
( f, g)_{\sigma}\eqdef  \iint_{\Omega\times\mathbb{R}^{3}}\left[  \sigma^{ij}\partial_{i}f\partial_{j}g+\sigma^{ij}v_{i}v_{j}fg\right]  dvdx,
\end{equation*}

\begin{equation*}		\label{E}
\mathcal{E}_{\vartheta}(f(t)) \eqdef   \big|  f(t)\big|_{2,\vartheta}^{2} + \int_{0}^{t} \big|  f(s)\big|_{\sigma,\vartheta}^{2} ds.
\end{equation*}

\subsection{Reformulation}\subsubsection{Linearization}
We linearize the \textit{Landau equation} \eqref{Landau} around the perturbation \eqref{perturbation}. This grants an equation for the perturbation $f(t,x,v)$ as
 \begin{equation}\label{linearized eq}
 \partial_t f+v\cdot\nabla_{x}f+Lf=\Gamma(f,f),
 \end{equation} and 
 \begin{equation}
 f(0,x,v)=f_{0}(x,v), \label{initial}%
 \end{equation}
 where $f_{0}$ is the
 initial data satisfying the mass-energy conservation laws:$$
 \int_{\Omega\times\mathbb{R}^{3}}f_{0}(x,v)\sqrt{\mu}=\int
 _{\Omega\times\mathbb{R}^{3}}|v|^{2}f_{0}(x,v)\sqrt{\mu}=0.$$
 The linear operator $L$ is further defined as%
 \begin{equation}		\label{L}
 L = -A -K,
 \end{equation} where the linear operator $A$ consists of the terms with at least one momentum derivative on $f$ as$$
 Af \eqdef \mu^{-1/2}\partial_{i}\left\{  \mu^{1/2}\sigma^{ij}[\partial_{j}f+v_{j}f]\right\} =\partial_{i}[\sigma^{ij}\partial_{j}f]-\sigma^{ij}v_{i}v_{j}f+\partial_{i}\sigma^{i}f,
$$ whereas the linear operator $K$ consists of the rest of the operator $L$ which does not contain any momentum derivative of $f$ as$$
 Kf\eqdef -\mu^{-1/2}\partial_{i}\left\{  \mu\left[  \phi^{ij}\ast\left\{  \mu^{1/2}[\partial_{j}f+v_{j}f]\right\}  \right]  \right\}.
$$On the other hand, the nonlinear operator $\Gamma$ is defined as
 \begin{equation}		\label{Gamma}
 \begin{split}
 \Gamma\lbrack g,f]  &  \eqdef \partial_{i}\left[  \left\{  \phi^{ij}\ast\lbrack\mu^{1/2}g]\right\}  \partial_{j}f\right]  -\left\{  \phi^{ij}\ast\lbrack	v_{i}\mu^{1/2}g]\right\}  \partial_{j}f\\
 &  \quad-\partial_{i}\left[  \left\{  \phi^{ij}\ast\lbrack\mu^{1/2}%
 \partial_{j}g]\right\}  f\right]  +\left\{  \phi^{ij}\ast\lbrack v_{i}\mu^{1/2}\partial_{j}g]\right\}  f,
 \end{split}
 \end{equation}where the diffusion matrix (collision frequency) $\sigma^{ij}_u$ is defined as
$$
 \sigma_{u}^{ij}(v) \eqdef  \phi^{ij}*u = \int_{\mathbb{R}^{3}}\phi^{ij}(v-v^{\prime})u(v^{\prime})dv^{\prime}.
 $$We also denote the special case when $u=\mu$ as
$$
 \sigma^{ij} = \sigma^{ij}_{\mu}, \quad \sigma^{i} = \sigma^{ij}v_{j}.
$$
 \subsubsection{Alternative formulation}
 In Section \ref{section: linear} \-- \ref{section: wellposedness lin} where we prove the wellposedness of the linear problem, we use an alternative representation \eqref{linearized-eq} of the \textit{Landau equation}:
 \begin{equation}
  \partial_t f + v\cdot\nabla_{\!x} f = \bar{A}_g f + \bar{K}_g f 
 \label{linearized-eq}
 \end{equation}
 where $\bar{A}_g f$  consists of the terms which contain at least one momentum-derivative of $f$ while  $\bar{K}_g f$ consists of the rest as follows:
 \begin{equation}\begin{split}\label{A_g} \bar{A}_g f \eqdef &\; \partial_i\left[\left\{\phi^{ij}\!\ast[\mu+\mu^{\!1/2}g]\right\}\partial_j f  \right] \\
 & - \left\{\phi^{ij}\!\ast[v_j\mu^{1/2}g]\right\}\partial_i f 
 - \left\{\phi^{ij}\!\ast[\mu^{1/2}\partial_j g]\right\}\partial_i f \\
 =:&\; \nabla_v\cdot\big(\sigma_{\!G}\nabla_{\!v} f\big)+a_g\cdot\nabla_{\!v}f, \ \text{and}\\[3pt]
 \bar{K}_g f \eqdef &\; K f + \partial_i \sigma^i f - \sigma^{ij}v_i v_j  f \\
 & -\partial_i\left\{\phi^{ij}\!\ast[\mu^{1/2}\partial_j g]\right\}f + \left\{\phi^{ij}\!\ast[v_i\mu^{1/2}\partial_j g]\right\}f.
 \end{split}\end{equation}
 \subsubsection{Weighted equation.}
 We will also utilize the extra weights on $v$ variable when we show the $L^\infty$ decay estimates. If $f$ satisfies \eqref{linearized-eq}, then the weighted function $f^\theta\eqdef w^\theta f$ satisfies the following equation:
 \begin{equation}
 \text{ (\ref{linearized-eq})}  \xrightarrow{\cdot  w^\theta} \;
 \partial_t f^{ \theta} + v\cdot\nabla_{\!x} f^{ \theta} = \bar{A}_g^{ \theta} f^{ \theta} + \bar{K}_g^{ \theta} f  ,
 \label{weighted-eq}
 \end{equation}where the terms are defined as
 \begin{equation*}
 f^{ \theta}\eqdef w^{\theta}f,\quad w\eqdef (1+|v|),\quad (\theta\in\mathbb{R})
 \end{equation*}
 \begin{align*} 
 \bar{A}_g^{ \theta} \eqdef &\; \bar{A}_g - 2\frac{_{\partial_{i} w^{\theta}}}{^{w^{\theta}}}\sigma_{\!G}^{ij}\partial_j = \bar{A}_g - 2\frac{_{\nabla w^{\theta}}}{^{w^{\theta}}}\sigma_{\!G} \cdot\nabla_{\!v}, \\[3pt]
 \bar{A}_g^{ \theta} f =&\; \nabla_v\cdot\big(\sigma_{\!G}\nabla_{\!v} f\big) + \left(a_g-2\frac{_{\nabla w^{\theta}}}{^{w^{\theta}}}\sigma_{\!G} \right)\cdot\nabla_{\!v}f \\
 =:&\; \nabla_v\cdot\big(\sigma_{\!G}\nabla_{\!v} f\big)+a_g^{ \theta}\cdot\nabla_{\!v}f, \ \text{and} \\[5pt]
 \bar{K}_g^{ \theta} f \eqdef &\; w^{\theta}\bar{K}_g f 
  + \left(2\frac{_{\partial_i w^{\theta}\partial_j w^{\theta}}}{^{w^{2\theta}}}\sigma_{\!G}^{ij} - \frac{_{\partial_{ij} w^{\theta}}}{^{w^{\theta}}}\sigma_{\!G}^{ij} - \frac{_{\partial_{j} w^{\theta}}}{^{w^{\theta}}}\partial_i\sigma_{\!G}^{ij} - \frac{_{\partial_{i} w^{\theta}}}{^{w^{\theta}}}a_g^{ i} \right) f^{ \theta}.
 \end{align*}For some estimates, we will also utilize the \textit{homogeneous} part of the weighted equation only:
 \begin{equation}
  \partial_t h + v\cdot\nabla_{\!x}h = \bar{A}_g^{ \theta}h  .
 \label{weighted-eq-h}
 \end{equation}

 \subsection{Further difficulties and ideas}
 \subsubsection{Further difficulties of the boundary value problem.}
 In the paper \cite{2016arXiv161005346K}, the authors worked on a simplest periodic box. When generalizing it to the bounded domain case, we would encounter some major difficulties of ``getting things {\em uniform}'' as we approach the boundary. More precisely, in the proof of the $L^\infty$ estimate, we transfer the regularity from $v$ to $t$ and $x$ by an averaging lemma, which is based on some Fourier analysis methods (e.g. the Fourier transform) and only makes sense in the whole domain. Also, for the H\"{o}lder estimate, the local estimate requires a {\em full} neighborhood at each point in the bounded domain including the boundary. Additionally, the $S^p$ estimate is in general developed in the whole space as well. All of above would become technically difficult if we have to directly work on a closed domain with boundary.

 \subsubsection{Extending outside the boundary.}
 To overcome these difficulties, we consider a local-flattening of the boundary and a proper extension of the interior domain beyond the flattened boundary, so we can see that the boundary extension still keeps the modified equation in a similar form.  Then this would allow us to see that the previous arguments for the whole space case can be applied to our extended domain. 

 \subsubsection{A few principles.}
 When designing the proper extension, we cannot just choose an arbitrary ``smooth'' one. There are some guidelines we should follow:
 \begin{itemize}
 	\item[$\bullet$] First of all, the transformation-extension at the boundary is supposed to be compatible with the original specular reflection boundary condition, making it a well-defined and {\em continuous} extension in the whole space.
 	 
 	\item[$\bullet$] Secondly, the transformed equation with respect to the new variables is expected to be in the same form as the original one, at least the characteristics/transport operator should be {\em invariant} under change of coordinates.
 	 
 	\item[$\bullet$] Moreover, we should check the (H\"{o}lder) continuity of the highest-order coefficient to make sure the existing theory can apply. 
 \end{itemize}
  \subsubsection{Our strategy: the mirror extension of the locally flattened domain}In Section \ref{boundary flattening and mirror extension}, the authors implement the (local) flattening of the boundary and observe how the original equation is converted via the boundary flattening and a proper extension. It turns out that, for the specular reflection boundary condition, a reflection-type extension (or so-called the \textit{mirror} extension) allows one to enjoy a similar kinetic equation to the \textit{Landau equation} in the whole space situation, and hence one can apply previously known techniques for the \textit{Landau equation} in a whole space or in a periodic box. The authors believe that the work developed in Section \ref{boundary flattening and mirror extension} would suggest a new method of approaching to the specular-reflection boundary value problem for varied diffusive kinetic equations, including the \textit{Landau equation}, the \textit{Fokker-Planck equation}, and the \textit{Boltzmann equation} without \textit{angular cutoff}.
 \section{Linear problem}\label{section: linear}
 As the start of the study of the Landau boundary value problem, we first construct a weak solution of the linearized problem.

 \subsection{The Landau initial-boundary value problem}
 \subsubsection{A homogeneous equation}Since the operator $\bar{A}_g f$  contains all terms with the derivatives of  $f$, we will first consider constructing weak solutions for the homogeneous equation
 \begin{equation} \label{linearized-eq-homo}
 \partial_t f + v\cdot\nabla_{\!x}f = \bar{A}_g f 
 \end{equation}
 with the same initial condition (\ref{initial}) and the following boundary condition, as introduced in Section \ref{section: introduction} above \eqref{Eq : Duhamel principle}.

 \subsubsection{Specular-reflection boundary condition.}
 We can rewrite the boundary condition~(\ref{specular}) as
 \begin{equation*}
 \gamma_- f = \mathcal{R}\left[\gamma_+ f\right] 
 \end{equation*} 
 in terms of the specular-reflection operator $\mathcal{R}$ defined by
 \begin{equation*}
 \mathcal{R}\!\left[\gamma_+ f\right] (t,x,v) = \gamma_+ f(t,x,R_x v)
 \end{equation*}
 for any $(t,x,v)\!\in\!\Sigma_{-}^{ T}$.

 \subsubsection{Basic estimates.}We would like to remark that (\ref{linearized-eq}) is in the form of a class of kinetic Fokker-Planck equations (also known as hypoelliptic   or ultraparabolic   of Kolmogorov type) with rough coefficients  (cf. \cite{golse2016harnack}): 
 \begin{equation*}
 \partial_t f + v\cdot\nabla_{\!x} f = \nabla_v\cdot(\mathbf{A} \nabla_{\!v} f ) + \mathbf{B}\cdot\nabla_{\!v}f + \mathbf{C}f
 \end{equation*}
 When  $\|g\|_{\infty}$  is sufficiently small, we have the following results regarding the coefficients:

 $\mathbf{A} \eqdef  \sigma_{\!G}(t,x,v)$  is a  $3\times3$  non-negative matrix (but \textit{not uniformly} elliptic in $v$)  satisfying
 \;(cf. Lemma 2.4 in \cite{2016arXiv161005346K})
 $$0 < (1\!+\!|v|)^{-3} \mathbf{I}  \lesssim  \mathbf{A}(v)  \lesssim  (1\!+\!|v|)^{-1} \mathbf{I} .$$  
 
 $\mathbf{B} \eqdef  a_g(t,x,v)$  is a uniformly bounded $3$-dimensional vector with \;(cf. Appendix A in \cite{golse2016harnack}) 
 $$\|\mathbf{B}[g]\|_{\infty}  \lesssim  \|g\|_{\infty}^{2/3}  \ll  1 .$$
 
 $\mathbf{C} \eqdef  \bar{K}_g$  is an operator bounded on $L^\infty$  with  $\|\mathbf{C}\|_{L^\infty \rightarrow L^\infty}\lesssim 1$. \;(cf. Lemma 2.9 in \cite{2016arXiv161005346K})

 \subsection{Notion of weak solutions}
 We first give the notion of weak solutions to the Landau initial-boundary value problem (\ref{linearized-eq-homo}), (\ref{initial}), and (\ref{specular}).
 \begin{definition} [Weak solutions] \label{Def:weak-sol}
 	$f \in L^\infty \!\left([0,T]  ; L^1\cap L^\infty (\Omega \times \mathbb{R}^3) \right)$ is a weak solution of the Landau initial-boundary value problem (\ref{linearized-eq-homo}), (\ref{initial}), and (\ref{specular}) if for any test function $\psi \in C_{t,x,v}^{1,1,2}\left((0,T)\times\Omega \times \mathbb{R}^3\right) \cap  C\!\left([0,T]\times\bar{\Omega} \times \mathbb{R}^3\right) $ such that $\psi(t)$ is compactly supported in $\bar{\Omega}\!\times\!\mathbb{R}^3$ for all $t\in [0,T]$ and with the dual specular reflection boundary condition
 	\begin{equation} \label{dual-specular-BC}
 	\begin{array}{rcl} 
 	& \gamma_+ \psi = \mathcal{R^*}\!\left[\gamma_- \psi\right] & \\[5pt]
 	\text{i.e.,}\; & \gamma_+ \psi(t,x,v) = \gamma_- \psi(t,x,R_x v) & \quad \forall\; (t,x,v)\in\Sigma_{+}^{ T},
 	\end{array}
 	\end{equation}
 	it satisfies that the function
 	\begin{equation*}
 	t  \mapsto \iint_{\Omega\times\mathbb{R}^3} f(t,x,v)\psi(t,x,v) dxdv
 	\end{equation*}
 	is continuous on $[0,T]$, and for every  $t\in [0,T]$,
 	\begin{equation} \label{Def:weak-formulation}
 	\begin{split}
 	& \iint_{\Omega\times\mathbb{R}^3} f(t,x,v) \psi(t,x,v) dxdv  - \iint_{\Omega\times\mathbb{R}^3} f_0(x,v) \psi(0,x,v) dxdv \\[3pt]
 	& = \int_0^t\! \iint_{\Omega\times\mathbb{R}^3} f(\tau,x,v) \Big[ \partial_t \psi + v\cdot\nabla_{\!x}\psi - \nabla_v\cdot\big(a_g \psi \big) + \nabla_v\cdot\big(\sigma_{\!G}\nabla_{\!v}\psi \big) \Big] dxdvd\tau. 
 	\end{split}
 	\end{equation}
 \end{definition}

 \subsection{Main theorem}
 The main theorem of Section \ref{sec: reg}--\ref{section: wellposedness lin} concerns the global well-posedness of the weak solution to (\ref{linearized-eq-homo}), (\ref{initial}), and (\ref{specular}).
 \begin{theorem} [Well-posedness of weak solutions] \label{main-thm}
 	Let  $T\!>\!0$  and  $f_0\in L^1\cap L^\infty (\Omega \times \mathbb{R}^3)$  be given. Then there exists a unique weak solution $f \in L^\infty \!\big([0,T]  ; $\newline$L^1\cap L^\infty (\Omega \times \mathbb{R}^3) \big)$ of the linearized \textit{Landau equation} with the specular-reflection boundary condition (\ref{linearized-eq-homo}), (\ref{initial}), and (\ref{specular}) in the sense of Definition \ref{Def:weak-sol}. Moreover, it satisfies the $L^\infty$ bound
 	\begin{equation*}
 	\|f(t)\|_{L^\infty (\Omega \times \mathbb{R}^3)}  \leq  \|f_0\|_{L^\infty (\Omega \times \mathbb{R}^3)}
 	\end{equation*}
 	and the $L^1$ bound
 	\begin{equation*}
 	\|f(t)\|_{L^1 (\Omega \times \mathbb{R}^3)}  \leq  \|f_0\|_{L^1 (\Omega \times \mathbb{R}^3)}
 	\end{equation*}
 	for each $t\in [0,T]$.
 \end{theorem}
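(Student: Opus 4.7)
The plan is to adapt the regularization-duality scheme of \cite{hwang2014fokker, MR3788197}, developed for the Fokker-Planck equation with absorbing boundary, to the specular-reflection setting. I would approximate \eqref{linearized-eq-homo} on three levels: replace the degenerate diffusion $\sigma_G$ by the uniformly elliptic $\sigma_G^\epsilon \eqdef \sigma_G + \epsilon I_3$, truncate the velocity domain to $B_R\eqdef\{|v|\le R\}$ with a homogeneous Neumann condition on $|v|=R$, and mollify the coefficients so that the approximate problem is a classical uniformly-parabolic linear kinetic Fokker-Planck equation on $[0,T]\times\Omega\times B_R$. Since $R_x$ is an isometry of $\mathbb{R}^3$ pointwise in $x$, the specular condition $\gamma_- f^{\epsilon,R}=\mathcal{R}[\gamma_+ f^{\epsilon,R}]$ can be imposed literally for the classical problem, and existence of a smooth approximate solution $f^{\epsilon,R}$ follows from standard parabolic theory after local flattening of $\partial\Omega$ in the spirit of Section \ref{boundary flattening and mirror extension}.

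For the uniform-in-$(\epsilon,R)$ estimates I would exploit the interplay between the primal equation and the backward adjoint problem
$$ -\partial_t\psi - v\cdot\nabla_{\!x}\psi - \nabla_v\cdot(a_g\psi) + \nabla_v\cdot(\sigma_G^\epsilon\nabla_v\psi)=\phi,\quad \psi(T)=\psi_T, $$
with the dual specular condition $\gamma_+\psi=\mathcal{R}^*[\gamma_-\psi]$. Because $R_x$ is an involution and an isometry, $\mathcal{R}^*=\mathcal{R}$, so the dual boundary condition is again specular, and the backward problem has the same structural form as the primal. The $L^\infty$ bound $\|f^{\epsilon,R}(t)\|_\infty\le\|f_0\|_\infty$ follows from the weak maximum principle applied to the primal: the only flux of $f^{\epsilon,R}$ through $\partial\Omega$ comes from the transport term, and under the specular identification the boundary contribution $\int_{\gamma_+}(v\cdot n_x)|f|^{2k}-\int_{\gamma_-}|v\cdot n_x||f|^{2k}$ cancels exactly via the isometric change of variables $v\mapsto R_x v$ (which preserves $|v\cdot n_x|$), so passage to the limit $k\to\infty$ in the resulting $L^{2k}$ estimate yields the $L^\infty$ bound. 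The $L^1$ bound is obtained in the same spirit by testing against a smooth approximation of $\mathrm{sgn}(f^{\epsilon,R})$, where once again the boundary terms cancel by specular symmetry and the interior terms are absorbed using the divergence-form structure of $\bar A_g$ together with $\|g\|_\infty\ll 1$.

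With uniform bounds on $f^{\epsilon,R}$ in $L^\infty_t(L^1\cap L^\infty)$, I would pass first to the limit $R\to\infty$ and then $\epsilon\to 0$, extracting a weak-$*$ convergent subsequence. Verifying the weak formulation \eqref{Def:weak-formulation} for the limit only requires passing to the limit in the dissipation term $\iint\nabla_v\cdot(\sigma_G^\epsilon\nabla_v\psi)\,f$, which is straightforward since the test function $\psi$ is smooth and compactly supported in $v$. Uniqueness follows by the standard dual argument: given two weak solutions $f_1,f_2$ with the same initial datum, for any $\phi\in C_c^\infty((0,T)\times\Omega\times\mathbb{R}^3)$ one solves the backward dual problem with source $\phi$ to produce a $\psi$ in the admissible test class of Definition \ref{Def:weak-sol}, and the weak formulation then yields $\iiint(f_1-f_2)\phi=0$ for every such $\phi$.

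The main obstacle is the incompatibility between the pointwise-defined traces available at the classical approximate level and the purely distributional role of the specular condition at the limiting $L^\infty$ level. The entire boundary content must be encoded through the dual class $\{\psi:\gamma_+\psi=\mathcal{R}^*\gamma_-\psi\}$, so one must check (i) that this class is rich enough to separate functions in $L^1\cap L^\infty$, which reduces to surjectivity of the source map for the backward dual problem, and (ii) that the approximate solutions $f^{\epsilon,R}$ satisfy the weak formulation when tested against every dual-admissible $\psi$, so that this formulation survives the limit. The construction of the backward dual problem with specular BC and non-uniformly elliptic diffusion in $v$ is itself delicate and is precisely why the regularization by $\epsilon I_3$ cannot be removed until after the a priori estimates are established.
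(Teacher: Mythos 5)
Your proposal follows a genuinely different regularization route from the paper, and it contains a non-trivial gap precisely at the point where the paper introduces its most technically distinctive device.

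The paper does not regularize by adding $\epsilon I_3$ to $\sigma_G$; instead it ``discretizes'' the diffusion into a nonlocal jump operator $Q^\varepsilon[f]$ of difference-quotient type, which turns the regularized equation into a transport equation perturbed by a bounded operator, solvable by the method of characteristics plus a Banach fixed-point argument. More importantly, the paper deliberately does \emph{not} impose the exact specular condition at the approximate level. It replaces it with the damped, iterated inflow condition $\gamma_-f^{\varepsilon,a,n}=(1-a)\mathcal{R}[\gamma_+f^{\varepsilon,a,n-1}]$, and it states explicitly why: the exact specular identification causes the $\gamma_+$ and $\gamma_-$ boundary contributions to cancel in every $L^p$ energy identity, which kills the trace estimate (Lemma~\ref{Lem:trace-est}) that is needed to control the term $\bar{f_0}(X(t_0),V(t_0))$ in the mild-solution formula and hence to run the fixed-point argument. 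You instead \emph{rely} on that cancellation to get the $L^{2k}$ estimates; that is a perfectly valid route to the a priori bound, but it cannot serve as the existence mechanism, and your existence mechanism --- ``existence of a smooth approximate solution follows from standard parabolic theory after local flattening'' --- is exactly the step the paper went to considerable trouble to avoid. The specular condition is not a Dirichlet, Neumann, or Robin condition but a non-local constraint coupling $\gamma_+$ and $\gamma_-$; standard parabolic solvability theory does not cover it directly, and what makes it tractable in the paper is the full flattening-plus-mirror-extension machinery of Section~\ref{boundary flattening and mirror extension}, which reduces the specular problem to a whole-space problem and requires checking the continuity and H\"older regularity of the transformed coefficients across $\{y_3=0\}$, a partition of unity, and a consistency argument for the transport operator. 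If you want to construct approximate smooth solutions with the literal specular condition, you must carry out that extension already at the approximate level; asserting ``standard parabolic theory'' does not close this.

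Aside from that gap, the remaining pieces of your proposal are correct but take different paths than the paper: you get the $L^\infty$ bound from primal $L^{2k}$ estimates and the boundary cancellation, whereas the paper gets it by a duality argument against solutions of the adjoint (backward) problem and a bare-hands maximum principle for that adjoint (Lemma~\ref{Lem:adj-pb}, Lemma~\ref{MP-L^infty-app-sol}); you get uniqueness by solving the inhomogeneous backward dual with source $\phi$, whereas the paper gets it immediately by applying the $L^\infty$ estimate to the difference of two solutions (Corollary~\ref{Cor:uniqueness} and Step 4 of the proof). Both of your alternatives are sound in principle, but the duality step for uniqueness would require you to verify that the inhomogeneous backward problem with source is solvable within the admissible test class of Definition~\ref{Def:weak-sol}, which is an extra lemma you have not supplied. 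Your observation that $\mathcal{R}^*=\mathcal{R}$ (up to swapping $\gamma_\pm$) because $R_x$ is an involutive isometry is correct and is implicitly used in the paper as well. The order of limits (first $R\to\infty$, then $\epsilon\to 0$) and the weak-$*$ compactness argument match the paper's Step 1 of the proof in spirit, though the paper also sends $n\to\infty$ and $a\to 0$ for its boundary iteration, which you do not need because you imposed the BC exactly.
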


 \subsection{Ideas for the linear problem}
 Our goal is to establish a well-posedness theory for the Landau initial-boundary value problem, in particular, the existence of weak solutions to the linearized \textit{Landau equation}. For that purpose, we adapt the mechanism of \textit{a priori }estimates for weak solutions, followed by a limiting process based on the compactness and weak convergence of approximate solutions. More precisely, we will follow the steps outlined below: 
 \begin{enumerate}
 	\item Design a sequence of regularized approximate problem and solve the problem by using the method of characteristics combined with a standard fixed-point argument.
 	\item Derive the uniform $L^\infty$ and $L^1$ estimates for weak solutions of the approximate problem by studying its adjoint problem to exploit the maximum principle property.
 	\item Pass to the limit in the approximate problem based on the weak compactness, which is ensured by the corresponding uniform boundedness.
 \end{enumerate}

 \section{Regularization \& approximation}\label{sec: reg}
 The first step for constructing a weak solution is to regularize the problem (\ref{linearized-eq-homo}), (\ref{initial}), and (\ref{specular}), so that the resulting approximate problem is relatively easier to solve, allowing us to find a sequence of approximate solutions.

 \subsection{Regularization of the problem}
\subsubsection{ Parametrization near the boundary.} 
 We follow the parametrization introduced in Section 3.2 of \cite{MR3788197}.
 Suppose we have defined the local (flattening) charts near the boundary $\big\{\varphi_i  ;  \Omega\cap B_{\delta_i}(x_i)\big\}$. 
 Since  $\partial\Omega$  is compact, we can choose \newline  $x_1,x_2,\cdots,x_n \in\partial\Omega$  such that  $\partial\Omega \subset \bigcup_{i=1}^{ n} B_{\delta_i}(x_i)$. Take  $\delta_0 >0$  to be the maximum value satisfying
 \begin{equation} \label{delta_0}
 \big\{ x\!\in\!\bar{\Omega}:{\rm dist}(x,\partial\Omega) < \delta_0 \big\}  \subset  \bigcup_{i=1}^n B_{\delta_i}(x_i) .
 \end{equation}
 Then divide $\bar{\Omega}$ into two parts. Let $\Omega_{{\rm bd}}$ be a tubular region near the boundary where the flattening coordinates is well-defined:
 \begin{align*}
 \Omega_{{\rm bd}} &\eqdef   \big\{ x\!\in\!\bar{\Omega}:{\rm dist}(x,\partial\Omega) < \delta_0 \big\}, \\[3pt]
 \Omega_{{\rm in}} &\eqdef   \bar{\Omega} \backslash \Omega_{{\rm bd}} .
 \end{align*}
 For $(x,v)\in \Omega_{{\rm bd}}\times\mathbb{R}^3_v$, define
 \begin{align} \label{x-v-perp}
 \begin{split}
 x_\perp &\eqdef   {\rm dist}(x,\partial\Omega) , \\[2pt]
 v_\perp &\eqdef   v\cdot n_{\hat{x}},
 \end{split}
 \end{align}
 where $\hat{x}\in\partial\Omega$  is the boundary point closest to $x$, and $n_{\hat{x}}$ is the outward unit normal vector at $\hat{x}\in\partial\Omega$. 
 Let us first fix $x\in \partial\Omega$. In a small neighborhood $\Omega\cap B_{\delta_{x}}(x)$, we take a local coordinate $x_{\parallel}(\mu_1,\mu_2)$ and denote the inward unit normal vector at each boundary point $x_{\parallel}(\mu_1,\mu_2)$ by $N(\mu_1,\mu_2)$. Then we choose $\delta_x>0$ small enough such that $$x=x_{\parallel}(\mu_1,\mu_2)+x_\perp N(\mu_1,\mu_2),\ v=w_1u_1+w_2u_2+v_\perp N(\mu_1,\mu_2),$$ are well-defined in $(\Omega\cap B_{\delta_x}(x))\times \rth$ where $u_i\eqdef \frac{\partial x_{\parallel}}{\partial \mu_i}(\mu_1,\mu_2)$ forms an orthogonal basis of the tangent plane $T_{x_\parallel}\partial\Omega$ at each $x_\parallel(\mu_1,\mu_2)$.
 
 Then the singular set of the phase-boundary can also be represented in terms of $x_\perp$ and $v_\perp$ by
 $$\gamma_0 \eqdef  \left\{ (x,v)\in\bar{\Omega}\times\mathbb{R}^3 :  x_\perp = v_\perp =0 \right\}.$$
 
 \subsubsection{Cutoff away from the grazing set for the transport term.}
 We first need to regularize the transport term  $v\cdot\nabla_{\!x}f$  to avoid possible obstacles coming from the singular boundary set (e.g. grazing collisions), especially in the analysis of the adjoint problem (see the proof of Lemma~\ref{Lem:adj-pb}).
 
 Let  $\varepsilon\in \left(0,\min\{\delta_0,\frac{1}{2}\} \right)$ be a small regularization parameter, where $\delta_0$ is given in (\ref{delta_0}), be fixed. Recall from (\ref{x-v-perp}) that $x_\perp$ and $v_\perp$ represent the normal components of $x$ and $v$, respectively, when $x\in \Omega_{{\rm bd}}$. We choose a nonnegative cutoff function  $\lambda_\varepsilon \in C^\infty(\mathbb{R})$  away from the origin which is monotonous on $(-\infty,0)$ and $(0,\infty)$: 
 \begin{equation*} 
 \lambda_\varepsilon(s) 
  =  \left\{ 
 \begin{array}{ll}
 \;0,&\quad \text{if }\;  |s|\leq \varepsilon^4 \\[5pt]
 \text{smooth},&\quad \text{if }\;  \varepsilon^4 <|s|< 2\varepsilon^4 \\[5pt]
 \;1,&\quad \text{if }\;  |s|\geq 2\varepsilon^4.
 \end{array}\right.
 \end{equation*}
 For $(x,v)\in \bar{\Omega}\times\mathbb{R}^3_v$, define
 \begin{equation*} 
 \beta_\varepsilon(v) 
  =  \left\{ 
 \begin{array}{ll}
 \lambda_\varepsilon(v_\perp) v,
 &\quad \text{if }\; x\in\Omega_{{\rm bd}} \\[5pt]
 v,
 &\quad \text{if }\; x\in\Omega_{{\rm in}}
 \end{array}\right.
 \end{equation*}
 and
 \begin{align*} 
 \eta_\varepsilon(x) 
 & =  \left\{ 
 \begin{array}{ll}
 \lambda_\varepsilon\big(x_{\!\perp}^{ 4}\big),
 &\quad \text{if }\; x\in\Omega_{{\rm bd}} \\[5pt]
  1,
 &\quad \text{if }\; x\in\Omega_{{\rm in}}
 \end{array}\right. \\[5pt]
 & =  \left\{ 
 \begin{array}{ll}
 \;0,&\quad \text{if }\;  x_{\!\perp} \leq \varepsilon \\[5pt]
 \text{smooth},&\quad \text{if }\;  \text{else} \\[5pt]
 \;1,&\quad \text{if }\;  x_{\!\perp} \geq 2^{1/4}\varepsilon \;\; \text{or}\;\; x\in\Omega_{{\rm in}}.
 \end{array}\right.
 \end{align*}
 
 We make a smooth truncation away from the grazing set by replacing the term  $v\cdot\nabla_{\!x}f$  with 
 $$\big\{ \beta_\varepsilon(v) + [v\!-\!\beta_\varepsilon(v)]  \eta_\varepsilon(x) \big\} \cdot\nabla_{\!x}f,$$
 which formally converges to  $v\cdot\nabla_{\!x}f$  as  $\varepsilon$  approaches  $0$.

 \subsubsection{``Discretization'' of the diffusion term.}
 To handle the second-order differential operator, we first diagonalize (standardize) the coefficient-matrix $\mathbf{A}\eqdef$\newline$ \sigma_{\!G}(t,x,v)$  by making a change of variables. Since $\mathbf{A}$ is positive definite, it is congruent to the identity matrix. Then we design a discretized approximation of the Laplacian as a difference-quotient in the integral form:
 \begin{equation*}
 Q^\varepsilon_{\!\Delta}[f](t,x,v) \eqdef   \frac{2}{\varepsilon^2} \int_{\mathbb{R}^3}\! \big[f(t,x,v\!+\!\varepsilon u)-f(t,x,v)\big] \xi(u) du,
 \end{equation*}
 where  $\xi(u)\in C_c^\infty(\mathbb{R}^3)$  is a nonnegative bump function satisfying
 \begin{align*}
 & \int_{\mathbb{R}^3} \xi(u) du  =  1 , \\[2pt]
 & \int_{\mathbb{R}^3} u_{i }\xi(u) du  =  0 ,\   (i=1,2,3) , \\[2pt]
 & \int_{\mathbb{R}^3} u_iu_{j }\xi(u) du  =  \delta_{ij}  ,\  (i,j=1,2,3) .
 \end{align*}
 We may construct such  $\xi(u)$  by setting
 \begin{equation*}
 \xi(u) = \xi(u_1,u_2,u_3) \eqdef   \xi_1(u_1) \xi_2(u_2) \xi_3(u_3)
 \end{equation*}
 with each  $\xi_i(u_i)\in C_c^\infty(\mathbb{R})$  being a nonnegative even bump function and for each  $i\in \{1,2,3\}$,
 \begin{align*}
 & \int_{\mathbb{R}} \xi_i(u_i) du_i  =  1 , \\[3pt]
 & \int_{\mathbb{R}} u_{i }\xi_i(u_i) du_i  =  0 , \\[3pt]
 & \int_{\mathbb{R}} u_i^{ 2 } \xi_i(u_i) du_i  =  1 .
 \end{align*}
 
 From Taylor's formula, we can see that  $Q^\varepsilon_{\!\Delta}[f]$ formally converges to our diffusion term  $\nabla_{\!v}\!\cdot\!\big(\sigma_{\!G}\nabla_{\!v} f\big)$  up to a congruence transformation. For simplicity, we may adapt the notation $Q^\varepsilon[f]$  and assume that 
  $Q^\varepsilon[f]  \rightarrow  \nabla_{\!v}\!\cdot\!\big(\sigma_{\!G}\nabla_{\!v} f\big)$  as  $\varepsilon \rightarrow 0$.
 
 It is worth pointing out that with the discretized diffusion operator $Q^\varepsilon[f]$  defined in this way, we are able to follow the method of characteristics to construct an approximate solution and show the existence by the fixed-point argument. It is also convenient to carry out the ``transfer of derivatives'' from one factor to the other with the integral representation.
 
 Furthermore, the subtlety of this approximate operator lies in that, on one hand, it preserves some ``positivity/ellipticity'' of the Laplacian, which plays a key role in the derivation of the maximum principle and  $L^\infty$ estimate; on the other hand, its mean-zero property (the fact that  $\iint_{\Omega\times\mathbb{R}^3} \bar{Q}^\varepsilon_{\!\Delta}[\psi] dxdv = 0$, an inheritance feature of the original elliptic operator in divergence form) also guarantees the $L^1$ estimate of the solutions; see the proof of Lemma \ref{Lem:adj-pb}.

 \subsubsection{Modified specular reflection boundary condition: a Dirichlet boundary condition} 
 In order to get the trace estimates to control the ``boundary term'' in our construction (see the proof of Lemma \ref{existence:mild-sol}), we have to modify the original problem by a Dirichlet boundary condition (or so-called \textit{inflow} boundary condition) $\gamma_- f^n = \alpha_{{\rm r}} \mathcal{R}\left[\gamma_+ f^{n-1}\right]$ for a sequence of approximate solutions $f^n$ with $n \in \mathbb{N}$ and  $\alpha_{{\rm r}} \!<\!1$, because for the specular-reflection case ($\alpha_{{\rm r}} \!=\!1$), the boundary contributions of $\gamma_-$ and $\gamma_+$ will cancel each other out in the estimates.
 
 Let $n\in \mathbb{N}$ and $a\in (0,1)$ be fixed, and approximate the specular reflection boundary condition by a modified one:
 \begin{equation*}
 \gamma_- f^n = (1\!-\!a)  \mathcal{R}\left[\gamma_+ f^{n-1}\right],
   \forall\; (t,x,v)\in \Sigma_{-}^{ T}, \ \text{if}\ n\geq 2,
 \end{equation*}and $\gamma_-f^{1}=0$.

 \subsubsection{The approximate problem.}
 To sum up, we choose and fix $\varepsilon>0$, $n\in \mathbb{N}$ and $a\in(0,1)$. Then we consider the approximate (regularized) equation:
 \begin{equation} \label{app-eq}
 \partial_t f^{\varepsilon,a,n} + \big\{ \beta_\varepsilon(v) + [v\!-\!\beta_\varepsilon(v)]  \eta_\varepsilon(x) \big\} \cdot\nabla_{\!x}f^{\varepsilon,a,n} - \mathbf{B}\cdot\nabla_{\!v}f^{\varepsilon,a,n} 
 = Q^\varepsilon[f^{\varepsilon,a,n}] 
 \end{equation}
 with the same initial condition
 \begin{equation} \label{app-initial}
 f^{\varepsilon,a,n}(0,x,v) = f_0(x,v),
 \end{equation} 
 and the modified specular reflection boundary condition
 \begin{equation} \label{modified-BC}
 \gamma_- f^{\varepsilon,a,n} = (1\!-\!a)  \mathcal{R}\left[\gamma_+ f^{\varepsilon,a,n-1}\right],\ \text{if}\ n\geq 2,
 \end{equation} and $\gamma_-f^{\varepsilon,a,1}=0$. 
 
 Note that the approximate equation (\ref{app-eq}) is essentially a transport equation combined with the ``jump process'' $Q^\varepsilon[f]$, where the transport term is truncated in a small neighborhood of the grazing set whose area is of $O\big(\varepsilon^5\big)$.
 Also, the approximate problem (\ref{app-eq}) \-- (\ref{modified-BC}) formally converges to the original problem (\ref{linearized-eq-homo}) \-- (\ref{specular}) as both  $\varepsilon$  and  $a$  go to  $0$ and $n\rightarrow \infty$.
 
 In the following sections we will first construct (weak) solutions  $F \eqdef  f^{\varepsilon,a,n}$  to the approximate problem and obtain uniform \textit{a priori }estimates for those approximate solutions.

 \subsection{Method of characteristics}
 Since the left-hand side of  (\ref{app-eq}) is a first-order differential operator, we can solve the approximate problem (\ref{app-eq}) \-- (\ref{modified-BC}) by the method of characteristics.

 \subsubsection{Characteristics.}
 Denote by $$\big( T(s),X(s),V(s) \big) \eqdef  \big( T(s;t,x,v),X(s;t,x,v),V(s;t,x,v) \big)$$ a solution of the (backward) characteristic equations for  $s\leq t$:
 
 First, solving $$\frac{d T(s)}{ds} = 1, \quad T(t)=t$$ gives  $T(s)=s$, and so the equations of  $\big(X(s),V(s)\big)$ read
 \begin{align}
 & \frac{d X(s)}{ds}  =  \beta_\varepsilon\big(V(s)\big) + \big[V(s) - \beta_\varepsilon\big(V(s)\big)\big]  \eta_\varepsilon\big(X(s)\big),\quad 
 X(t) = x  \label{characteristics-X} \\[5pt]
 & \frac{d V(s)}{ds}  =  -\mathbf{B}\big(s,X(s),V(s)\big),\quad
 V(t) = v  \label{characteristics-V}
 \end{align}

 \paragraph{$\bullet$ \textit{The backward stopping-time.}}
 Due to the cutoff function away from the singular boundary and the modified specular reflection boundary condition,  it could be too complicated to write out the solution of the backward characteristics explicitly. 
 To get around it, for a given  $(t,x,v)$, we define  $t_0 = t_0(t,x,v) \in [0,T]$  to be
 \begin{enumerate}
 	\item[(i)] The time when the particle starting with $(t,x,v)$  first hits the boundary  $\partial\Omega$  along the backward characteristics: \\[3pt]
 	$t_0 \eqdef  \max\Big\{ \tau\in (0,t ]: X(\tau)= x - \int_\tau^t \big\{\beta_\varepsilon\big(V(s)\big) \\ + \big[V(s) - \beta_\varepsilon\big(V(s)\big)\big] \eta_\varepsilon\big(X(s)\big) \big\} ds \in \partial\Omega \Big\}$,  if it exists.
 	 
 	\item[(ii)] Otherwise (if there is no such value i.e.,the particle never hits the boundary $\partial\Omega$  back in time until $\tau\!=\!0$ ), then set  $t_0=0$.
 \end{enumerate}

 \paragraph{$\bullet$ \textit{The Jacobian.}}
 We next record and calculate the Jacobian $J_\mathcal{C}(s ;t)$  for  $t_0\!<\!s\!<\!t$  of the transformation
 $$\mathcal{C}:\;  \big(X(t),V(t)\big) = (x,v) \;\mapsto\; \big(X(s),V(s)\big) ,$$
 which is given by 
 \begin{equation*}
 J_\mathcal{C}(s ;t) \eqdef   \frac{\partial\big(X(s),V(s)\big)}{\partial\big(X(t),V(t)\big)}
  =  \det \left(
 \begin{array}{c|c}
 \frac{\partial X(s)}{\partial x}  &  \frac{\partial X(s)}{\partial v} \\[3pt]
 \hline \\[-10pt]
 \frac{\partial V(s)}{\partial x}  &  \frac{\partial V(s)}{\partial v}
 \end{array}
 \right) ,
 \end{equation*}
 where $\big(X(s),V(s)\big)$ is the characteristics defined in (\ref{characteristics-X}) and (\ref{characteristics-V}).
 
 Geometrically,  $J_\mathcal{C}(s ;t)$  measures the rate of change of the unit volume in the phase space along the characteristics, and it is bounded near $1$ in terms of  $\varepsilon$  and  $T$, as shown in the following lemma.
 
 \begin{lemma} [Estimate of the Jacobian] \label{Lem:jacobian-est}
 	Let  $s\in (t_0,t)\subset [0,T]$, and  $(x,v)\in \Omega \times\mathbb{R}^3$  be given. Then the Jacobian $J_\mathcal{C}(s ;t)$  satisfies the estimates
 	\begin{equation} \label{jacobian-est-1}
 	e^{-CT} \leq  e^{-C(1+\varepsilon^3)|t-s|}  \leq  \big|J_\mathcal{C}(s ;t)\big|  \leq  e^{C(1+\varepsilon^3)|t-s|}  \leq  e^{CT},
 	\end{equation}
 	and 
 	\begin{equation} \label{jacobian-est-2}
 	1-O(T)  \leq  \big|J_\mathcal{C}(s ;t)\big|  \leq  1+O(T) ,
 	\end{equation}
 	where  $O(T) = CT e^{CT}$, for some  $C>0$  independent of  $x, v, T$ and  $\varepsilon$.
 \end{lemma}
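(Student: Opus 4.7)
The plan is to invoke Liouville's formula for the Jacobian of an ODE flow. Writing $F = (F_x, F_v)$ for the right-hand side of the characteristic system \eqref{characteristics-X}--\eqref{characteristics-V}, with $F_x = \beta_\varepsilon(V) + [V-\beta_\varepsilon(V)]\eta_\varepsilon(X)$ and $F_v = -\mathbf{B}(s,X,V)$, the regularized cutoffs together with the bounded, smooth nature of $\mathbf{B}$ make $F$ Lipschitz in $(X,V)$, so the backward flow on $(t_0,t)$ is a $C^1$ diffeomorphism. Hence
\begin{equation*}
\frac{d}{ds}\log J_\mathcal{C}(s;t) = (\nabla_{(x,v)}\!\cdot F)(s, X(s), V(s)), \qquad J_\mathcal{C}(t;t) = 1,
\end{equation*}
which in particular forces $J_\mathcal{C}(s;t)>0$, and integrating yields $\log J_\mathcal{C}(s;t) = -\int_s^t (\nabla_{(x,v)}\!\cdot F)(\tau, X(\tau), V(\tau))\,d\tau$.

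The essential step is to establish the pointwise divergence bound $|\nabla_{(x,v)}\!\cdot F| \leq C(1+\varepsilon^3)$ with $C$ independent of the state variables and of $\varepsilon, T$. The contribution $-\nabla_v\!\cdot\mathbf{B}$ is uniformly bounded by a constant because $\mathbf{B}=a_g$ is a convolution of $g$ against the smooth Landau kernel $\phi^{ij}$, so its $v$-derivative inherits uniform bounds from the decay of $\phi^{ij}$. The nontrivial piece is $\nabla_x\!\cdot F_x$, which vanishes off $\Omega_{\mathrm{bd}}$. Inside $\Omega_{\mathrm{bd}}$, I would differentiate each factor and exploit two structural smallness mechanisms. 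Since $\nabla_x\eta_\varepsilon$ is parallel to $n_{\hat X}$ and $|\nabla_x\eta_\varepsilon|\lesssim x_\perp^3/\varepsilon^4 \lesssim \varepsilon^{-1}$ on its support, only the normal component of $V-\beta_\varepsilon(V)$ survives in the dot product, so using $|V_\perp|\leq 2\varepsilon^4$ on the support of $1-\lambda_\varepsilon(V_\perp)$ gives
\begin{equation*}
\big| [V-\beta_\varepsilon(V)]\cdot \nabla_x\eta_\varepsilon(X) \big| \lesssim |V_\perp|\cdot|\partial_{x_\perp}\eta_\varepsilon| \lesssim \varepsilon^4 \cdot \varepsilon^{-1} = \varepsilon^3.
\end{equation*}
The $x$-dependence of $\beta_\varepsilon(V) = \lambda_\varepsilon(V\cdot n_{\hat X})V$ through $n_{\hat X}$ produces a $\lambda_\varepsilon'$-factor of size $\varepsilon^{-4}$ multiplying $V\cdot\nabla_x n_{\hat X}$; the smoothness of $\partial\Omega$ bounds $|\nabla_x n_{\hat X}|$ by the curvature, and on the support of $\lambda_\varepsilon'$ we again have $|V_\perp|\in [\varepsilon^4, 2\varepsilon^4]$, yielding another $O(\varepsilon^3)$ contribution after projecting $V$ onto $n_{\hat X}$.

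Combining these estimates with $|t-s|\leq T$ and $\varepsilon<1/2$ proves \eqref{jacobian-est-1}, with the outermost inequalities following from $(1+\varepsilon^3)|t-s|\leq CT$. For \eqref{jacobian-est-2}, I would apply the elementary inequality $|e^{\pm\alpha}-1| \leq \alpha e^\alpha$ with $\alpha = C(1+\varepsilon^3)|t-s| \leq CT$ to obtain $|J_\mathcal{C}(s;t)-1| \leq CTe^{CT} = O(T)$. The hardest part will be the careful bookkeeping of all terms produced by differentiating $\beta_\varepsilon(V)$ and $\eta_\varepsilon(X)$ through the boundary-adapted normal $n_{\hat X}$; the delicate balance between the smallness of $V_\perp\sim\varepsilon^4$ and the singularity of the cutoff derivatives $\sim\varepsilon^{-4}$ has to be tracked component-by-component, and the fact that $\nabla_x\eta_\varepsilon$ is purely normal is what prevents the estimate from picking up an unbounded $|V|$ factor.
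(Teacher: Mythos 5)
Your overall strategy — Liouville's formula for $\log J_\mathcal{C}$, followed by a pointwise bound on the divergence of the characteristic vector field — is exactly the paper's. Your treatment of the two terms the paper records, namely $\nabla_v\cdot\mathbf{B}$ (uniformly bounded) and $[V-\beta_\varepsilon(V)]\cdot\nabla_x\eta_\varepsilon$ (of size $\varepsilon^4\cdot\varepsilon^{-1}=\varepsilon^3$ by the same support-matching argument), matches the paper's computation, as does the elementary Taylor-expansion step passing from \eqref{jacobian-est-1} to \eqref{jacobian-est-2}.

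Where you depart from the paper is in tracking the $x$-dependence of $\beta_\varepsilon$ through $n_{\hat X}$ in $v_\perp = v\cdot n_{\hat x}$. The paper's proof treats $\beta_\varepsilon$ as a pure $v$-quantity (effectively freezing $n_{\hat x}$), so no $\nabla_x\cdot\beta_\varepsilon$ term appears in its trace computation. You attempt to include it, but the claimed $O(\varepsilon^3)$ bound ``after projecting $V$ onto $n_{\hat X}$'' is not justified. Computing directly,
\begin{equation*}
\nabla_x\cdot\beta_\varepsilon \;=\; \lambda_\varepsilon'(V_\perp)\sum_{j,k}V_j V_k\,\partial_{x_j} n_{\hat x,k}.
\end{equation*}
The matrix $(\partial_{x_j} n_{\hat x,k})$ is purely \emph{tangential}: the closest-point projection $\hat x$ has zero normal derivative, and derivatives of the unit normal are orthogonal to $n_{\hat x}$. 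Hence the quadratic form sees only the tangential part $V_\parallel$ of $V$, and there is no factor of $V_\perp$ in it to cancel the $\varepsilon^{-4}$ from $\lambda_\varepsilon'$. On $\mathrm{supp}\,\lambda_\varepsilon'$ the expression is of size $\varepsilon^{-4}|V_\parallel|^2$ times the second fundamental form, which is \emph{not} uniformly bounded in $v$, whereas the lemma requires $C$ independent of $v$. So this step of your argument does not close: either argue that the term is genuinely absent (e.g.\ because $\lambda_\varepsilon$ is evaluated against a locally frozen reference normal, as the paper's trace computation implicitly assumes), or identify an additional cancellation; as written, the claimed $O(\varepsilon^3)$ estimate for this contribution is incorrect.
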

 
 \begin{proof}
 	We denote the right-hand side of  (\ref{characteristics-X}) by  $\mathbf{W_{\!\varepsilon}}\big(X(s),V(s)\big)$  for brevity, where
 	$$\mathbf{W_{\!\varepsilon}}(x,v) \eqdef   \beta_\varepsilon(v) + [v\!-\!\beta_\varepsilon(v)]  \eta_\varepsilon(x) .$$
 	Differentiating the characteristic equations (\ref{characteristics-X}) and (\ref{characteristics-V}) with respect to $(x,v)$  by the chain rule yields a first-order system of homogeneous ODEs of dimension six
 	\begin{equation} \label{system-ODE}
 	\frac{d}{ds} \left(
 	\begin{array}{c|c}
 	\frac{\partial X(s)}{\partial x}  &  \frac{\partial X(s)}{\partial v} \\[3pt]
 	\hline \\[-10pt]
 	\frac{\partial V(s)}{\partial x}  &  \frac{\partial V(s)}{\partial v}
 	\end{array}
 	\right)
 	= \left. \left(
 	\begin{array}{c|c}
 	 &  \\[-12pt]
 	\nabla_{\!x}\mathbf{W_{\!\varepsilon}}  &  \nabla_{\!v}\mathbf{W_{\!\varepsilon}} \\[3pt]
 	\hline \\[-10pt]
 	-\nabla_{\!x}\mathbf{B}  &  -\nabla_{\!v}\mathbf{B} \\[-12pt]
 	 & 
 	\end{array}
 	\right) \!\right\vert_{\left(X(s),V(s)\right)}
 	\!\!\!\left(
 	\begin{array}{c|c}
 	\frac{\partial X(s)}{\partial x}  &  \frac{\partial X(s)}{\partial v} \\[3pt]
 	\hline \\[-10pt]
 	\frac{\partial V(s)}{\partial x}  &  \frac{\partial V(s)}{\partial v}
 	\end{array}
 	\right) ,
 	\end{equation}
 	where we call the coefficient-matrix  $\mathbf{M}(s)$.
 	
 	Observe that the Jacobian matrix is a matrix-valued solution of (\ref{system-ODE}) on $[t_0,t]$. 
 	By applying the Liouville's formula, its determinant $J_\mathcal{C}(s ;t)$  satisfies the identity
 	\begin{equation} \label{Liouville-formula}
 	J_\mathcal{C}(s ;t)  =  J_\mathcal{C}(t ;t)  \exp\!\left(\int_t^s{\rm tr}\left[\mathbf{M}(\tau)\right]d\tau \right) ,
 	\end{equation}
 	where  $J_\mathcal{C}(t ;t) = \det( \mathbf{I}_{ 6\times 6}) =1$, noting that  $\big(X(t),V(t)\big) = (x,v)$.
 	
 	Now it remains to compute and estimate the trace of  $\mathbf{M}(s)$, the sum of its diagonal entries:
 	\begin{align*}
 	{\rm tr}\left[\mathbf{M}(\tau)\right]
 	&  =  \big[\nabla_{\!x}\cdot\mathbf{W_{\!\varepsilon}} - \nabla_{\!v}\cdot\mathbf{B}\big] \big(\tau,X(\tau),V(\tau)\big) \\[2pt]
 	&  =  \big[V(\tau) - \beta_\varepsilon\big(V(\tau)\big)\big]\cdot \nabla_{\!x}\eta_\varepsilon\big(X(\tau)\big) \\
 	&  \; -\big(\nabla_{\!v}\cdot\mathbf{B}\big) \big(\tau,X(\tau),V(\tau)\big)
 	\end{align*}
 	For the first term, since it survives only near the grazing set and $\eta_\varepsilon$ depends only on the normal component of  $X(\tau)$, 
 	\begin{align*}
 	\Big| \big[V(\tau) - \beta_\varepsilon\big(V(\tau)\big)\big]\cdot \nabla_{\!x}\eta_\varepsilon\big(X(\tau)\big) \Big|
 	&  =  \big[v\!-\!\beta_\varepsilon(v)\big]_{\perp} \big(V(\tau)\big) 
 	\left|\frac{\partial\eta_\varepsilon}{\partial x_{\!\perp}} \big(X(\tau)\big) \right| \\[3pt]
 	&  =  O(\varepsilon^4)  O(1/\varepsilon)  =  O(\varepsilon^3) .
 	\end{align*}
 	Also the second term is uniformly bounded recalling the definition of  $\mathbf{B}\eqdef a_g(t,x,v)$  in (\ref{A_g}). 
 	So we get (\ref{jacobian-est-1}) from (\ref{Liouville-formula}). 
 	
 	Finally, (\ref{jacobian-est-2}) follows from (\ref{jacobian-est-1}) with Taylor's expansion of the exponential function.
 \end{proof}

 \subsubsection{Mild solutions.}
 For a fixed $(\varepsilon,a,n)$, let  $F \eqdef  f^{\varepsilon,a,n}$  denote the approximate solution. Also, denote the given function $G=f^{\varepsilon,a,n-1}$ for each fixed $(\varepsilon,a,n)$. With the definition of the characteristics,  (\ref{app-eq}) can be rewritten as an ODE (via chain rule and (\ref{characteristics-X}) and (\ref{characteristics-V}))
 \begin{equation*}
 \frac{d}{ds}  F\big(s,X(s),V(s)\big)  =  Q^\varepsilon[F]\big(s,X(s),V(s)\big) .   
 \end{equation*}
 Integrating this equation from $t_0$ to $t$ in $s$  along the characteristics, we give the following definition of the mild solutions represented by an explicit formula. 
 
 \begin{definition} [Mild solutions to approximate problem] \label{Def:mild-sol-app-pb}
 	 $F \eqdef f^{\varepsilon,a,n} \in L^\infty \big([0,T]  ;$\newline $  L^\infty (\Omega \times \mathbb{R}^3) \big)$ is a mild solution of the approximate problem (\ref{app-eq}) \-- (\ref{modified-BC}) if it satisfies that for all  $t\in [0,T]$,
 	\begin{align} \label{mild-sol-integral-formula}
 	\begin{split}
 	F(t,x,v) & =  \bar{f_0}\big(X(t_0),V(t_0)\big)  + \int_{t_0}^t Q^\varepsilon[F]\big(s,X(s),V(s)\big)ds \\ 
 	& =:  \mathcal{T}[F] (t,x,v),
 	\end{split}
 	\end{align}
 	where
 	\begin{align*} 
 	& \bar{f_0}\big(X(t_0),V(t_0)\big) \\[7pt]
 	=& \left\{ 
 	\begin{array}{ll}
 	& f_0\big(X(0),V(0)\big), 
 	\ \text{if}\;\;\; t_0=0 \;\;\text{and}\;\; X(0)\in\Omega \\[7pt]
 	& \gamma_- F\big(t_0,X(t_0),V(t_0)\big) = (1\!-\!a) \gamma_+ G\big(t_0,X(t_0),R_{X(t_0)}V(t_0)\big), 
 	\ \text{otherwise.}
 	\end{array}\right.
 	\end{align*}
 \end{definition}

 \subsection{Construction of approximate solutions}

 Now we aim to construct weak solutions to the approximate problem by first showing that a mild solution in the sense of Definition~\ref{Def:mild-sol-app-pb}  indeed exists. We therefore need to deduce some \emph{a priori} estimates concerning the trace of the approximate solutions, especially in view of the expression for  $\bar{f_0}\big(X(t_0),V(t_0)\big)$  in the formula (\ref{mild-sol-integral-formula}).
 
 Thanks to the modified specular reflection boundary condition, we are able to bound the trace term by performing a formal procedure of the  $L^p$ estimate and finally letting  $p$  go to $\infty$. In fact, the required regularity here is already assured when used in the context of the fixed-point argument.
 
 \begin{lemma} [Trace estimates] \label{Lem:trace-est}
 	For each $n\in \mathbb{N}$, the solution  $f^{\varepsilon,a,n}$ to the approximate problem (\ref{app-eq}) \-- (\ref{modified-BC}) has trace values satisfying the estimate
 	\begin{equation}\label{trace-est-L^p}||\gamma_\pm f^{\varepsilon,a,n}||_{L^p(\Sigma_+)}
 \leq e^{\left(\frac{\nabla_{\!v}\cdot\mathbf{B}}{p}  +  \frac{4}{\varepsilon^2}\right) T}  \|f_0\|_{L^p (\Omega \times \mathbb{R}^3)}\left(\frac{1-(1-a)^{pn}}{1-(1-a)^p}\right)^{\frac{1}{p}},
 \end{equation} and
 	\begin{equation} \label{trace-est-L^infty}||\gamma_\pm f^{\varepsilon,a,n}||_{L^\infty(\Sigma_+)}
 	\leq e^{\left( \frac{4}{\varepsilon^2}\right) T}  \|f_0\|_{L^\infty (\Omega \times \mathbb{R}^3)}.
 	\end{equation} 
 \end{lemma}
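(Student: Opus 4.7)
The plan is to establish a differential inequality for the $L^p(\Omega\times\mathbb{R}^3)$ energy $\|f^{\varepsilon,a,n}(t)\|_p^p$ together with its outgoing trace flux, to convert the incoming flux into one involving $f^{\varepsilon,a,n-1}$ via the modified specular boundary condition (\ref{modified-BC}), to set up a two-term linear recurrence in $n$, and finally to iterate it into a geometric series and send $p\to\infty$ to recover the $L^\infty$ bound.

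Writing $F\eqdef f^{\varepsilon,a,n}$ and $G\eqdef f^{\varepsilon,a,n-1}$, I would test (\ref{app-eq}) against $p|F|^{p-2}F$ and integrate over $\Omega\times\mathbb{R}^3$. Integration by parts on the truncated transport yields the phase-boundary flux $\iint_\gamma (W_\varepsilon\!\cdot\! n_x)|\gamma F|^p\,dS_xdv$ plus an $O(\varepsilon^3)\|F\|_p^p$ remainder from $\nabla_x\!\cdot\! W_\varepsilon$ (bounded exactly as in Lemma~\ref{Lem:jacobian-est}); since $\eta_\varepsilon$ vanishes on $\partial\Omega$, the flux reduces to $\iint_\gamma\lambda_\varepsilon(v_\perp)v_\perp|\gamma F|^p\,dS_xdv$. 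Integration by parts in $v$ on the drift gives $\iint(\nabla_v\!\cdot\!\mathbf{B})|F|^p \leq \|\nabla_v\!\cdot\!\mathbf{B}\|_\infty\|F\|_p^p$, and the discretized diffusion is controlled from its integral representation via H\"older's inequality and translation invariance of Lebesgue measure in $v$:
\[
p\iint|F|^{p-2}F\cdot Q^\varepsilon[F]\,dxdv \;\leq\; \frac{4p}{\varepsilon^2}\|F\|_p^p.
\]
Applying (\ref{modified-BC}) on $\gamma_-$ and the measure-preserving change of variables $v\!\mapsto\!R_xv$ (which swaps $\gamma_-\!\leftrightarrow\!\gamma_+$ and leaves $\lambda_\varepsilon(v_\perp)|v_\perp|$ invariant) rewrites the incoming contribution as $(1\!-\!a)^p\iint_{\gamma_+}\lambda_\varepsilon|v_\perp||\gamma_+G|^p$, producing
\[
\frac{d}{dt}\|F\|_p^p + \iint_{\gamma_+}\lambda_\varepsilon|v_\perp||\gamma_+F|^p\,dS_xdv \;\leq\; \Bigl(\|\nabla_v\!\cdot\!\mathbf{B}\|_\infty + \frac{4p}{\varepsilon^2}\Bigr)\|F\|_p^p + (1\!-\!a)^p\iint_{\gamma_+}\lambda_\varepsilon|v_\perp||\gamma_+G|^p\,dS_xdv.
\]

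Integrating in $t\in[0,T]$ with a Gr\"onwall-type weight produces the recurrence
\[
\|\gamma_+f^{\varepsilon,a,n}\|_{L^p(\Sigma_+^T)}^p \;\leq\; e^{(\|\nabla_v\cdot\mathbf{B}\|_\infty + \frac{4p}{\varepsilon^2})T}\|f_0\|_p^p \;+\; (1\!-\!a)^p\|\gamma_+f^{\varepsilon,a,n-1}\|_{L^p(\Sigma_+^T)}^p;
\]
since $\gamma_-f^{\varepsilon,a,1}\!\equiv\!0$ the recurrence starts clean at $n\!=\!1$, and iterating it sums a geometric series of ratio $(1-a)^p$, producing the factor $\tfrac{1-(1-a)^{pn}}{1-(1-a)^p}$. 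Extracting the $p$-th root gives (\ref{trace-est-L^p}), and sending $p\to\infty$---so that $(1-a)^p\!\to\!0$, the geometric sum collapses to $1$, and $\|f_0\|_p\!\to\!\|f_0\|_\infty$---delivers (\ref{trace-est-L^infty}). The main obstacle is that $F$ is only known a priori as a mild solution through (\ref{mild-sol-integral-formula}), so the $L^p$ testing is not literally legitimate; I would regularize $F$ by a mollification in $v$ (which commutes with $Q^\varepsilon$ and respects the smooth transport cutoff $W_\varepsilon$ away from the grazing set), carry out the identity at the regularized level, and pass to the limit. A secondary subtlety is that the energy identity controls only the $\lambda_\varepsilon$-weighted boundary integral; on the grazing slab $\{|v_\perp|\!<\!\varepsilon^4\}$ the transport is frozen, so the trace there is controlled pointwise by the mild formulation itself, which closes the remaining gap.
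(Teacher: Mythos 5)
Your argument is essentially the paper's own proof: multiply (\ref{app-eq}) by $p|F|^{p-2}F$, integrate by parts to extract the $\beta_\varepsilon$-weighted boundary fluxes, bound the drift and the jump operator $Q^\varepsilon$ by $\nabla_v\!\cdot\!\mathbf{B} + 4p/\varepsilon^2$, use (\ref{modified-BC}) to rewrite the incoming flux as $(1-a)^p$ times the previous iterate's outgoing flux, iterate the resulting recurrence into the geometric factor $\tfrac{1-(1-a)^{pn}}{1-(1-a)^p}$, and send $p\to\infty$. The two ``subtleties'' you flag (justifying the $L^p$ testing for a mild solution, and the fact that the energy identity controls only the $\lambda_\varepsilon$-weighted boundary integral while the grazing slab is handled by the frozen transport) are points the paper glosses over, and your observations about them are correct and useful.
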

 
 \begin{proof}  
 	Multiply the approximate equation (\ref{app-eq}) by  $p (f^{\varepsilon,a,n})^{ p-1}$, integrate over  $\Omega \times \mathbb{R}^3$, and then use the divergence theorem (i.e.,integration by parts), we get
 	\begin{align*} 
 	& \frac{d}{dt} \|f^{\varepsilon,a,n}(t)\|_{L^p (\Omega \times \mathbb{R}^3)}^{ p} + \int_{\gamma_+} \big(\gamma_+ f^{\varepsilon,a,n}\big)^p |\beta_\varepsilon(v)\!\cdot\! n_x|dS_xdv \\& \ \ \ \  \ \ \ \ \ \ \ \ \ -\! \int_{\gamma_-} \big(\gamma_- f^{\varepsilon,a,n}\big)^p |\beta_\varepsilon(v)\!\cdot\! n_x|dS_xdv\\
 	& \leq\;  (\nabla_{\!v}\cdot\mathbf{B}) \|f^{\varepsilon,a,n}(t)\|_{L^p (\Omega \times \mathbb{R}^3)}^{ p}  +  \frac{4p}{\varepsilon^2}  \|f^{\varepsilon,a,n}(t)\|_{L^p (\Omega \times \mathbb{R}^3)}^{ p} 	.
 	\end{align*}Then the Gr\"onwall inequality yields that we have
 	\begin{multline*}
 	||f^{\varepsilon,a,n}(t)||^{p}_{L^p(\Omega\times \rth)} +||\gamma_+f^{\varepsilon,a,n}||^p_{L^p(\Sigma_+)}\\
 	\leq ||\gamma_-f^{\varepsilon,a,n}||^p_{L^p(\Sigma_-)}+e^{\left(\nabla_{\!v}\cdot\mathbf{B} +  \frac{4p}{\varepsilon^2}\right) T} ||f^{\varepsilon,a,n}(0)||^{p}_{L^p(\Omega\times \rth)},\end{multline*} for each $t\in [0,T]$. 
We note that $||\gamma_- f^{\varepsilon,a,n}||^p_{L^p(\Sigma_-)}=(1-a)^p||\gamma_+ f^{\varepsilon,a,n-1}||^p_{L^p(\Sigma_+)}$. Then, by induction, we further have that
 	$$||\gamma_+f^{\varepsilon,a,n}||^p_{L^p(\Sigma_+)}\leq e^{\left(\nabla_{\!v}\cdot\mathbf{B}  +  \frac{4p}{\varepsilon^2}\right) T}  \|f_0\|^p_{L^p (\Omega \times \mathbb{R}^3)}\left(\frac{1-(1-a)^{pn}}{1-(1-a)^p}\right).
 	$$ Thus, we have
 	$$||\gamma_+f^{\varepsilon,a,n}||_{L^p(\Sigma_+)}
 		\leq e^{\left(\frac{\nabla_{\!v}\cdot\mathbf{B}}{p}  +  \frac{4}{\varepsilon^2}\right) T}  \|f_0\|_{L^p (\Omega \times \mathbb{R}^3)}\left(\frac{1-(1-a)^{pn}}{1-(1-a)^p}\right)^{\frac{1}{p}}.
$$ This proves the $L^p$ bounds of the traces for $1\leq p<\infty$. Since $0<1-a<1$, we can further pass to the limit $p\rightarrow \infty$ and obtain
 		$$||\gamma_+f^{\varepsilon,a,n}||_{L^\infty(\Sigma_+)}
 	\leq e^{\left( \frac{4}{\varepsilon^2}\right) T}  \|f_0\|_{L^\infty (\Omega \times \mathbb{R}^3)}.
 $$ This proves (\ref{trace-est-L^infty}) as $||\gamma_- f^{\varepsilon,a,n}||_{L^p(\Sigma_-)}<||\gamma_+ f^{\varepsilon,a,n-1}||_{L^p(\Sigma_+)}$ for any $1\leq p\leq \infty$ and $n\geq 2$.
 \end{proof}
 As a consequence, we can see that the $L^\infty$ norm of the traces for $f^{\varepsilon, a, n}$ is bounded independently of $n$.
As we can see from the proof above, only when  $p=\infty$  is the bounding coefficient controlled by  $T$, otherwise it could blow up as  $n\rightarrow \infty$  and $a\rightarrow 0$ in any short period of time. This observation suggests that we can work in a space with the $L^\infty$ norm for the specular reflection boundary, which requires the limit $n\rightarrow \infty$ and $a\rightarrow 0$. 
 
 In the following lemma, we will show the existence of a mild solution to the approximate problem (\ref{app-eq}) \-- (\ref{modified-BC}) by the fixed-point theorem and a standard continuation argument.

 \begin{lemma} [Existence of mild solutions to approximate problem] \label{existence:mild-sol}
 	For any given\newline constants $\varepsilon,$ $a>0$, $n\in \mathbb{N}$, a given $T\!>\!0$ independent of $\varepsilon$, $a$ and $n$, and a given initial distribution $f_0\in L^1\cap L^\infty (\Omega \times\mathbb{R}^3)$, there exists a unique mild solution  $F \!\eqdef  f^{\varepsilon,a,n} \in C \!\left([0,T]  ; L^1 (\Omega \times \mathbb{R}^3) \right)  \cap  L^\infty \!\left([0,T]  ; L^\infty (\Omega \times \mathbb{R}^3) \right)$ of the approximate problem (\ref{app-eq}) \-- (\ref{modified-BC}) in  $[0,T]$.
 \end{lemma}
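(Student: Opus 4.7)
The approach is a Banach contraction argument for the integral operator $\mathcal{T}$ defined in (\ref{mild-sol-integral-formula}), treating the source term $\bar{f_0}(X(t_0),V(t_0))$ as a prescribed inhomogeneity. For $n=1$ the inflow vanishes and the source is simply $f_0$ composed with the backward characteristic (on $\{t_0=0\}$); for $n\ge 2$ it additionally involves $(1-a)\gamma_+f^{\varepsilon,a,n-1}(t_0,X(t_0),R_{X(t_0)}V(t_0))$, which the $L^\infty$ trace bound (\ref{trace-est-L^infty}) of Lemma~\ref{Lem:trace-est} controls uniformly in $n$ by $e^{(4/\varepsilon^2)T}\|f_0\|_{L^\infty}$. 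Hence $\mathcal{T}$ depends on the unknown $F$ only through the discretized diffusion $Q^\varepsilon[F]$, while $\bar{f_0}(X(t_0),V(t_0))$ is a fixed, bounded function of $(t,x,v)$.

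For the $L^\infty$ existence, fix $T_\ast=\min\{T,\varepsilon^2/8\}$ and work in $Y=L^\infty([0,T_\ast]\times\Omega\times\mathbb{R}^3)$. The definition of $Q^\varepsilon$ together with $\int_{\mathbb{R}^3}\xi(u)\,du=1$ gives the pointwise Lipschitz bound $|Q^\varepsilon[F]-Q^\varepsilon[\tilde F]|\le (4/\varepsilon^2)\|F-\tilde F\|_{L^\infty}$, so
\[
\|\mathcal{T}[F]-\mathcal{T}[\tilde F]\|_Y\le \frac{4T_\ast}{\varepsilon^2}\|F-\tilde F\|_Y\le \tfrac12\|F-\tilde F\|_Y.
\]
Banach's fixed-point theorem supplies a unique fixed point in $Y$, and since $T_\ast$ depends only on $\varepsilon$, we restart at $T_\ast$ with data $F(T_\ast,\cdot)$ and iterate finitely many times to reach time $T$. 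Uniqueness on each subinterval is inherited from the contraction.

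For the $L^1$ bound, multiplying (\ref{app-eq}) by $\operatorname{sgn}(F)$ and integrating over $(x,v)$, the divergence theorem together with the identity $\iint[|F(v+\varepsilon u)|-|F(v)|]\xi(u)\,du\,dv=0$ (change of variables, $\int\xi=1$) yields
\[
\frac{d}{dt}\|F(t)\|_{L^1} + \|\gamma_+F\|_{L^1(\gamma_+)} - \|\gamma_-F\|_{L^1(\gamma_-)} \le (\nabla_v\!\cdot\!\mathbf{B})\,\|F(t)\|_{L^1}.
\]
The modified boundary condition provides $\|\gamma_-F\|_{L^1(\gamma_-)}=(1-a)\|\gamma_+G\|_{L^1(\gamma_+)}$, so Gr\"onwall combined with induction on $n$ (controlled by the $p=1$ trace bound (\ref{trace-est-L^p})) yields a uniform-in-time $L^1$ bound. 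Continuity $t\mapsto F(t)\in L^1$ then follows by re-running the contraction in the mixed norm $C_tL^1\cap L^\infty_{t,x,v}$: pointwise continuity of $\mathcal{T}[F](t,\cdot)$ in $t$ is immediate from (\ref{mild-sol-integral-formula}) and continuity of the flow, while the $L^1$ bound just derived supplies the required uniform majorant.

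The principal technical obstacle is ensuring that the backward characteristic $(X(s),V(s))$ and the stopping time $t_0(t,x,v)$ give rise to a well-defined, measurable composition in $\bar{f_0}$. The cutoffs $\beta_\varepsilon,\eta_\varepsilon$ are designed precisely so that a characteristic whose normal velocity is near-grazing is prevented from reaching $\partial\Omega$ by the vanishing of $\eta_\varepsilon$ in the $\varepsilon$-tube around the boundary; whenever $t_0>0$, the hit is therefore transversal with $|V(t_0)\cdot n_{X(t_0)}|$ bounded below in terms of $\varepsilon$. With this transversality, $t_0$ depends continuously on $(t,x,v)$ off a negligible set, the specular composition $R_{X(t_0)}V(t_0)$ is regular, and Lemma~\ref{Lem:jacobian-est} justifies the changes of variables used above. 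Once these geometric facts are in place, both the contraction estimate and the $L^1$ argument proceed routinely.
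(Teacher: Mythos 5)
Your proposal follows the same basic route as the paper: a Banach fixed-point argument applied to the mild-solution operator $\mathcal{T}$ from (\ref{mild-sol-integral-formula}), with $\bar{f_0}$ treated as a fixed inhomogeneity (so it cancels in $\mathcal{T}[F_1]-\mathcal{T}[F_2]$), the crucial pointwise Lipschitz bound $\frac{4}{\varepsilon^2}$ for $Q^\varepsilon$, and a continuation argument exploiting that the local time only depends on $\varepsilon$, not on the initial datum. The noteworthy deviation is in how you obtain the $L^1$ control and the $C([0,T];L^1)$ membership. The paper works from the start in the complete metric space $\mathcal{X}$ (a ball in $C([0,T_1];L^1)\cap L^\infty$) and derives the $L^1$ self-mapping bound directly from the integral representation by a change of variables along the characteristic flow, invoking Lemma~\ref{Lem:jacobian-est} to control the Jacobian; continuity in $t$ then follows from dominated convergence using continuity of $X,V,t_0$ in $t$. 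You instead first run the contraction in $L^\infty$ alone, and then propose to derive an $L^1$ Gr\"onwall bound by multiplying (\ref{app-eq}) by $\operatorname{sgn}(F)$ and integrating. That route works (it is, in essence, the $p=1$ case of the $L^p$ computation in Lemma~\ref{Lem:trace-est}), but it is a formal a posteriori energy estimate on a mild solution whose only known regularity is $L^\infty$; the paper's change-of-variables approach avoids having to justify that integration by parts. You do ultimately observe that re-running the contraction in the mixed norm $C_tL^1\cap L^\infty$ is needed, at which point you are essentially back to the paper's $\mathcal{X}$. So the gap is not fatal but your write-up would be cleaner and fully rigorous if you derived the $L^1$ self-map bound from the integral formula and Lemma~\ref{Lem:jacobian-est} directly, as the paper does, rather than via the $\operatorname{sgn}$-multiplier route. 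Your discussion of transversality of the stopping time $t_0$ and the role of $\beta_\varepsilon,\eta_\varepsilon$ near the grazing set is correct and a useful complement to the paper's presentation, which treats this somewhat tersely.
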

 
 \begin{proof}  We first construct a solution local in time by a fixed-point argument. Let 
 		\begin{align*}
 		\mathcal{X} \eqdef &\; \bigg\{F\in C \!\left([0,T_1]  ; L^1 (\Omega \times \mathbb{R}^3) \right)  \cap  L^\infty \!\left([0,T_1]  ; L^\infty (\Omega \times \mathbb{R}^3) \right) : \\
 		&\; \sup_{t\in [0,T_1]}\|F(t)\|_{ L^\infty}\leq 2\|f_0\|_{L^\infty},\; 
 		\sup_{t\in [0,T_1]}\|F(t)\|_{ L^1}\leq 2\|f_0\|_{L^1}
 		\bigg\} 
 		\end{align*}
 		be our work space, and it is obviously a complete metric space.
 		See also the right-hand side of (\ref{mild-sol-integral-formula}) for the definition of  $\mathcal{T}[F] (t,x,v)$.
 		Now we aim to show that  $\mathcal{T}$  maps  $\mathcal{X}$  to itself and is a contraction, if  $T_1=T_1(\varepsilon,a,n) >0$  is sufficiently small. 
 		
 		 First, for all  $F\in\mathcal{X}$, 
 		\begin{equation*}
 		\big|Q^\varepsilon[F]\big(s,X(s),V(s)\big)\big|  \leq  
 		\frac{4}{\varepsilon^2}  \|F(s)\|_{ L^\infty}\! \int_{\mathbb{R}^3}\!\xi(u) du  \leq 
 		\frac{8}{\varepsilon^2} \|f_0\|_{L^\infty},
 		\end{equation*}
 		recalling the expression of  $Q^\varepsilon[F]$.
 		Together with the bound for the term of  $\bar{f_0}$  by using Lemma~\ref{Lem:trace-est}\footnote{Note that the trace estimate (\ref{trace-est-L^infty}) is actually uniform in the iteration process generated by the contraction mapping $\mathcal{T}$, and therefore the applicability can be justified.}, we get for all  $t\in[0,T_1]$,
 		\begin{align*}
 		\|\mathcal{T}[F](t)\|_{L^\infty} 
 		&  \leq  \|\bar{f_0}\|_{L^\infty} + T_1\big|Q^\varepsilon[F]\big| \\[2pt]
 		&  \leq  e^{C(\varepsilon) T_1}\|f_0\|_{L^\infty} + \frac{8T_1}{\varepsilon^2}\|f_0\|_{L^\infty} \\
 		&  \leq  2\|f_0\|_{L^\infty},
 		\end{align*}
 		provided  $T_1$  is chosen in such a way that  $e^{C(\varepsilon) T_1} \!+\! \frac{8T_1}{\varepsilon^2} \leq 2$, where  $C(\varepsilon)=\frac{4}{\varepsilon^2}\sim O(\frac{1}{\varepsilon^2})$  is given in (\ref{trace-est-L^infty}). 
 		This means 
 		\begin{equation*}
 		\sup_{t\in [0,T_1]}\|\mathcal{T}[F](t)\|_{ L^\infty} \leq  2\|f_0\|_{L^\infty} .
 		\end{equation*}
 		We go on to estimate the  $L^1$ norm of  $\mathcal{T}[F](t)$  for any  $t\in[0,T_1]$:
 		\begin{align*}
 		\|\mathcal{T}[F](t)\|_{L^1} 
 		&  \leq  \iint_{\Omega\times\mathbb{R}^3} \big|\bar{f_0}\big(X(t_0),V(t_0)\big)\big|dxdv \\
 		&  + \iint_{\Omega\times\mathbb{R}^3} \int_{t_0}^t \big|Q^\varepsilon[F]\big(s,X(s),V(s)\big)\big| ds dxdv \\[3pt]
 		&  =:  I + II ,
 		\end{align*}
 		where the first integral
 		\begin{align*}
 		I 
 		&  =  \iint_{\Omega\times\mathbb{R}^3} \big|\bar{f_0}\big(X(t_0),V(t_0)\big)\big|dxdv \\
 		&  \leq  e^{C T_1}\! \iint_{\Omega\times\mathbb{R}^3} \big|f_0(x,v)\big|dxdv ,
 		\end{align*}
 		by a change of variables  $(x,v)\mapsto\big(X(t_0),V(t_0)\big)$  and Lemma~\ref{Lem:jacobian-est} for the bound of its Jacobian, where the constant  $C>0$  can be found in (\ref{jacobian-est-1}).  
 		For a similar reason, the second integral can be estimated as follows.
 		\begin{align*}
 		II 
 		&  =  \iint_{\Omega\times\mathbb{R}^3} \int_{t_0}^t \big|Q^\varepsilon[F]\big(s,X(s),V(s)\big)\big| ds dxdv \\
 		&  \leq  \frac{2}{\varepsilon^2} \iint_{\Omega\times\mathbb{R}^3}  \int_{t_0}^t \int_{\mathbb{R}^3} \Big[\big|F\big(s,X(s),V(s)+\varepsilon u\big)\big| \\&\ \ + \big|F\big(s,X(s),V(s)\big)\big|\Big]\xi(u) du ds dxdv \\
 		&  \leq  \frac{4}{\varepsilon^2}  e^{C T_1}  \|\xi\|_{L^1}\! \int_0^t\iint_{\Omega\times\mathbb{R}^3} \big|F(s,x,v)\big| dxdv ds \\
 		&  \leq  \frac{4}{\varepsilon^2}  e^{C T_1}  T_1\! \sup_{s\in [0,T_1]}\!\|F(s)\|_{ L^1} \\
 		&  \leq  \frac{8T_1}{\varepsilon^2}  e^{C T_1}  \|f_0\|_{L^1} .
 		\end{align*}
 		So if  $T_1$  is further made small enough such that  $e^{C T_1}\! \left(1 + \frac{8T_1}{\varepsilon^2}\right) \leq 2$, we then have 
 		\begin{equation*}
 		\sup_{t\in [0,T_1]}\|\mathcal{T}[F](t)\|_{ L^1} 
 		 \leq  e^{C T_1}\! \left(1 + \frac{8T_1}{\varepsilon^2}\right) \|f_0\|_{L^1}
 		 \leq  2\|f_0\|_{L^1} .
 		\end{equation*}
 		For the continuity in time of  $\|\mathcal{T}[F](t)\|_{L^1}$, we argue by the absolute continuity of  $L^1$ norm and the dominated convergence theorem, observing that $X(s) \!=\! X(s;t,x,v)$, $V(s) \!=\! V(s;t,x,v)$, and  $t_0 \!=\! t_0(t,x,v)$  are continuous in $t$. 
 		So we see  $\mathcal{T}[F]\in C \!\left([0,T_1]  ; L^1 (\Omega \times \mathbb{R}^3) \right)$. 
 		All these above imply that  $\mathcal{T}[F]\in \mathcal{X}$  and hence  $\mathcal{T}$  maps  $\mathcal{X}$  into  $\mathcal{X}$.
 		
 		 Secondly, for all  $F_1,F_2 \in\mathcal{X}$  and  $t\in[0,T_1]$, similar arguments yield 
 		\begin{align*}
 		\|\mathcal{T}[F_1](t) - \mathcal{T}[F_2](t)\|_{L^\infty}
 		&  =  \|\mathcal{T}[F_1\!-\!F_2](t)\|_{L^\infty} \\[5pt]
 		&  \leq  0  +  T_1\big|Q^\varepsilon[F_1\!-\!F_2]\big| \\[1pt]
 		&  \leq  \frac{4T_1}{\varepsilon^2} \sup_{t\in [0,T_1]}\|F_1(t)-F_2(t)\|_{L^\infty},
 		\end{align*}
 		\begin{equation*}
 		\|\mathcal{T}[F_1](t) - \mathcal{T}[F_2](t)\|_{L^1}  \leq  
 		\frac{4T_1}{\varepsilon^2}  e^{C T_1}\!\! \sup_{t\in [0,T_1]}\|F_1(t)-F_2(t)\|_{L^1} .
 		\end{equation*}
 		Note that the first term of  $\mathcal{T}[F_1-F_2]$  vanishes, again due to Lemma~\ref{Lem:trace-est}  for the case when  $t_0>0$.
 		Since  $\frac{4T_1}{\varepsilon^2} < 1/2 <1$  and  $\frac{4T_1}{\varepsilon^2}  e^{C T_1} <1/2 <1$  with our choice of  $T_1$  above, we conclude that  $\mathcal{T}$  is a contraction.
 	 		
 		Therefore, by the Banach fixed-point theorem (i.e.,contraction mapping principle), there exists a unique mild solution in $\mathcal{X}$  on the time interval  $[0,T_1]$  for such  $T_1=T_1(\varepsilon)$  that both  $e^{C(\varepsilon) T_1} + \frac{8T_1}{\varepsilon^2}  \leq 2$  and  $e^{C T_1}\! \left(1 + \frac{8T_1}{\varepsilon^2}\right) \leq 2$  are satisfied.
 		 
 		  For the global existence, since  $T_1=T_1(\varepsilon)$  does not depend on the initial data $f_0$, by a continuation argument, we can extend the existence time interval to an arbitrary time  $T>0$  independent of  $(\varepsilon,a,n)$.
 \end{proof}
 \section{Uniform a priori estimates}
 In this section, we will obtain the uniform estimates for the approximate solutions, which is the prerequisite for the construction of weak solutions by passing to the limit the approximate solutions.
 
 The main ingredient of the proof is the maximum principle, a property that has been extensively studied in the analysis of elliptic, parabolic, and even ``hypo-elliptic'' problems. Here we exploit this property and adapt it to the corresponding set-up of our problem.

 \subsection{Weak solutions to the approximate problem}
 We first introduce the definition of weak solutions to the approximate problem and show the existence of the weak solutions.
 
 \begin{definition} [Weak solutions to approximate problem] \label{Def:weak-sol-app-pb}
 	$F \!\eqdef \! f^{\varepsilon,a,n} \in C \!\big([0,T]  ;$\newline$ L^1 (\Omega \times \mathbb{R}^3) \big) \cap L^\infty \!\left([0,T]  ; L^\infty (\Omega \times \mathbb{R}^3) \right)$ is a weak solution of the approximate problem (\ref{app-eq}) \-- (\ref{modified-BC}) if for any test function  $\psi \in C_{t,x,v}^{1,1,1}\left((0,T)\times\Omega \times \mathbb{R}^3\right)  \cap  C\!\left([0,T]\times\bar{\Omega} \times \mathbb{R}^3\right) $ such that $\psi(t)$ is compactly supported in $\bar{\Omega}\!\times\!\mathbb{R}^3$ for all $t\in [0,T]$ and with the dual modified specular reflection boundary condition:
 	\begin{equation} \label{dual-modified-specular-BC} 
 	\begin{array}{rcl} 
 	& \gamma_+ \psi = (1\!-\!a)  \mathcal{R^*}\!\left[\gamma_- \psi\right] & \\[5pt]
 	\text{i.e.,}\; & \gamma_+ \psi(t,x,v) = (1\!-\!a)  \gamma_- \psi(t,x,R_x v) & \quad \forall\; (t,x,v)\in\Sigma_{+}^{ T},
 	\end{array}
 	\end{equation}
 	it satisfies that for every $t\in [0,T]$,
 	\begin{multline} \label{Def:weak-formulation-app-pb}
 	 \iint_{\Omega\times\mathbb{R}^3} F(t,x,v) \psi(t,x,v) dxdv  - \iint_{\Omega\times\mathbb{R}^3} f_0(x,v) \psi(0,x,v) dxdv \\[3pt]
 	 = \int_0^t\! \iint_{\Omega\times\mathbb{R}^3} F(\tau,x,v) \bigg[ \partial_t \psi + \nabla_{\!x}\cdot \Big(\big\{\beta_\varepsilon(v) + [v\!-\!\beta_\varepsilon(v)]  \eta_\varepsilon(x) \big\} \psi \Big) \\[3pt]
 - \nabla_v\cdot\big(\mathbf{B}  \psi \big)  + \frac{2}{\varepsilon^2} \int_{\mathbb{R}^3}\! \big[\psi(\tau,x,v\!-\!\varepsilon u)-\psi(\tau,x,v)\big] \xi(u) du \bigg] dxdvd\tau\\
 	 -\int_{\Sigma^t}(\beta_\varepsilon(v)\cdot n_x)\gamma F\psi dS_xdvd\tau.
 	\end{multline}
 	Here we introduce a new notation  $\bar{Q}^\varepsilon[\psi]$  for the adjoint operator
 	\begin{equation*}
 	\bar{Q}^\varepsilon[\psi](t,x,v) \eqdef   \frac{2}{\varepsilon^2} \int_{\mathbb{R}^3}\! \big[\psi(t,x,v\!-\!\varepsilon u)-\psi(t,x,v)\big] \xi(u) du .
 	\end{equation*}
 \end{definition}
 \begin{remark}
 	The reason that we adapt the modified specular reflection boundary condition \eqref{dual-modified-specular-BC} for the test function $\psi$ is that we can have 
 	\begin{multline*}\int_{\Sigma^t}(\beta_\varepsilon(v)\cdot n_x)\gamma F\psi dS_xdvd\tau\\=(1-a)\int_{\Sigma_+^t}(\beta_\varepsilon(v)\cdot n_x)\gamma_+ (f^{\varepsilon,a,n}-f^{\varepsilon,a,n-1})\psi dS_xdvd\tau
 	\rightarrow 0,\end{multline*} as $n\rightarrow \infty$ when we go back to the original specular reflection boundary problem.
 \end{remark}
  Then we prove the existence of weak solutions to the approximate problem via showing that the mild solution from Lemma \ref{existence:mild-sol} is indeed a weak solution in the following lemma. The proof is a multi-dimensional generalization of the proof in \cite{MR3614499} for one-dimensional Fokker-Planck equation with the inflow boundary condition.
 \begin{lemma} [Existence of weak solutions to the approximate problem]
 	Let  $T>0$ and  $f_0\in L^1\cap L^\infty (\Omega \times\mathbb{R}^3)$. Then there exists a weak solution  $F \!\eqdef \! f^{\varepsilon,a,n} \in C \!\left([0,T]  ; L^1 (\Omega \times \mathbb{R}^3) \right)  \cap  L^\infty \!\left([0,T]  ; L^\infty (\Omega \times \mathbb{R}^3) \right)$ of the approximate problem\newline (\ref{app-eq}) \-- (\ref{modified-BC}).
 \end{lemma}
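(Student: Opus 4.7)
The plan is to verify that the mild solution $F\eqdef f^{\varepsilon,a,n}$ produced in Lemma~\ref{existence:mild-sol} is automatically a weak solution in the sense of Definition~\ref{Def:weak-sol-app-pb}, following the multi-dimensional generalization of the one-dimensional argument of~\cite{MR3614499}. The key observation is that the characteristic representation~(\ref{mild-sol-integral-formula}) endows $F$ with enough regularity along the flow to permit direct manipulation against a test function.

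Fix an admissible test function $\psi$. Along any backward characteristic $(s,X(s),V(s))$ with terminal data $(X(t),V(t))=(x,v)$, the integral formula~(\ref{mild-sol-integral-formula}) says that $s\mapsto F(s,X(s),V(s))$ is absolutely continuous with $\frac{d}{ds}F(s,X(s),V(s))=Q^\varepsilon[F](s,X(s),V(s))$ for almost every $s\in(t_0,t)$. Combining this with the product rule and the characteristic equations~(\ref{characteristics-X})--(\ref{characteristics-V}) yields
$$\frac{d}{ds}\big[F\psi\big](s,X(s),V(s))=\Big(Q^\varepsilon[F]\psi+F\big\{\partial_s\psi+W_\varepsilon\!\cdot\!\nabla_{\!x}\psi-\mathbf{B}\!\cdot\!\nabla_{\!v}\psi\big\}\Big)\Big|_{(s,X(s),V(s))},$$
where I write $W_\varepsilon(x,v)\eqdef\beta_\varepsilon(v)+[v-\beta_\varepsilon(v)]\eta_\varepsilon(x)$. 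Integrating in $s\in[t_0,t]$ and invoking Definition~\ref{Def:mild-sol-app-pb} for the data at $s=t_0$ produces a pointwise identity for every $(t,x,v)$.

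I then integrate this pointwise identity over $(x,v)\in\Omega\times\mathbb{R}^3$. Fubini swaps the $ds$- and $dxdv$-integrals on the right-hand side, and the change of variables $(x,v)\mapsto(X(s),V(s))$ at each fixed $s\in[0,t]$---a smooth diffeomorphism with Jacobian close to unity by Lemma~\ref{Lem:jacobian-est}---transfers the spatial integral to the current-time phase variables. Integration by parts in $v$ produces no boundary contribution because $\psi(t)$ is compactly supported in $\mathbb{R}^3_v$, while integration by parts in $x$ yields a surface integral on $\gamma$ with coefficient $W_\varepsilon\!\cdot\!n_x$; since $\eta_\varepsilon(x)=0$ for $x\in\partial\Omega$ by construction, this coefficient reduces to $\beta_\varepsilon(v)\!\cdot\!n_x$, exactly matching the surface term in~(\ref{Def:weak-formulation-app-pb}). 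The nonlocal quantity $Q^\varepsilon[F]\psi$ is converted to $F\,\bar{Q}^\varepsilon[\psi]$ via Fubini together with a translation in $v$ (equivalently, a change of variable $u\mapsto-u$ using the evenness of each factor $\xi_i$).

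It remains to match the initial/boundary contribution coming from $\bar{f_0}(X(t_0),V(t_0))\psi(t_0,X(t_0),V(t_0))$. I will split $\Omega\times\mathbb{R}^3$ according to whether $t_0=0$ or $t_0>0$. On $\{t_0=0\}$, a direct change of variables $(x,v)\mapsto(X(0),V(0))$ recovers the desired initial contribution $\iint f_0(x,v)\psi(0,x,v)\,dxdv$. On $\{t_0>0\}$, where the backward characteristic first meets $\partial\Omega$ transversally---transversality being guaranteed by the cutoff $\beta_\varepsilon$, which keeps the flow a distance $O(\varepsilon^4)$ away from $\gamma_0$---a coarea-type parametrization via the hitting time rewrites the volume integral as a boundary integral over $\Sigma_-^t$ weighted by $|\beta_\varepsilon(v)\!\cdot\!n_x|$. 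The factor $\bar{f_0}$ at the hit point equals $\gamma_- F=(1-a)\mathcal{R}[\gamma_+ f^{\varepsilon,a,n-1}]$ by~(\ref{modified-BC}), so this boundary integral combines with the surface contribution from the preceding paragraph to yield precisely the single $\gamma$-integral $-\int_{\Sigma^t}(\beta_\varepsilon\!\cdot\!n_x)\gamma F\,\psi\,dS_xdvd\tau$ appearing in~(\ref{Def:weak-formulation-app-pb}). I expect the chief technical difficulty to lie in this last coarea step: one must track transversality constants and signs carefully to verify the algebraic matching of the volume-to-surface conversion with the $\gamma_-$-portion of the boundary term. The explicit cutoffs $\beta_\varepsilon,\eta_\varepsilon$, however, make the bookkeeping tractable, and continuity in time of $t\mapsto\iint F(t)\psi(t)\,dxdv$ is inherited from the $C([0,T];L^1)$ regularity of $F$ established in Lemma~\ref{existence:mild-sol}.
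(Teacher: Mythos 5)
Your overall strategy---verify that the mild solution from Lemma~\ref{existence:mild-sol} satisfies the weak formulation~(\ref{Def:weak-formulation-app-pb}) by integrating along the regularized characteristics and changing variables---is the same as the paper's, and the $\{t_0=0\}$/$\{t_0>0\}$ split that produces the initial term and the $\gamma_-$ contribution via a coarea/hitting-time parametrization is also how the paper handles the $II$ piece. However, two technical points in your plan are not quite right as stated, and both are exactly where the paper uses extra structure.

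First, the Jacobian of the map $(x,v)\mapsto(X(s),V(s))$ is not a fixed constant; it evolves in $s$ according to Liouville's formula~(\ref{Liouville-formula}), $\partial_s J_{\mathcal C}(s;t)={\rm tr}[\mathbf{M}(s)]\,J_{\mathcal C}(s;t)$ with ${\rm tr}[\mathbf{M}]=\nabla_x\!\cdot\!\mathbf{W}_\varepsilon-\nabla_v\!\cdot\!\mathbf{B}$. The weak formulation requires the divergence-form terms $\nabla_x\!\cdot\!(\mathbf{W}_\varepsilon\psi)-\nabla_v\!\cdot\!(\mathbf{B}\psi)$, whereas your chain-rule expansion of $\tfrac{d}{ds}[F\psi]$ only produces $\mathbf{W}_\varepsilon\!\cdot\!\nabla_x\psi-\mathbf{B}\!\cdot\!\nabla_v\psi$. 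The discrepancy, $F(\nabla_x\!\cdot\!\mathbf{W}_\varepsilon-\nabla_v\!\cdot\!\mathbf{B})\psi$, is precisely what the $s$-derivative of the Jacobian supplies. The paper's device is to differentiate the product $\psi(s,X(s),V(s))\cdot\tfrac{\partial(X(s),V(s))}{\partial(x,v)}$ in $s$ so that Liouville's theorem automatically contributes these terms (see the computation of $I$ via~(\ref{system-ODE})); if you do not carry the Jacobian inside the $\partial_s$, the divergence terms do not appear and the resulting identity does not match~(\ref{Def:weak-formulation-app-pb}). Your reference to the Jacobian being \emph{close to unity} (Lemma~\ref{Lem:jacobian-est}) is an error-size bound, not the ingredient needed here; what matters is its exact evolution law.

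Second, the $\gamma_+$ contribution cannot be obtained from ``integration by parts in $x$'' in the way you describe. After the change of variables $(x,v)\mapsto(X(s),V(s))$ at fixed $s$, the image of $\Omega\times\mathbb R^3$ is not all of $\Omega\times\mathbb R^3$: it is the proper subset of points whose forward trajectory stays inside $\Omega$ up to time $t$. Integrating by parts over that truncated domain does not give a surface integral over $\gamma$, and over the full $\Omega\times\mathbb R^3$ it is not the integral you have. The correct mechanism---and what the paper does in the $III$ computation---is to integrate along the characteristics up to the \emph{forward} stopping time $t_1$ and split $\{t_1<t\}$ versus $\{t_1=t\}$: the case $t_1<t$ produces the $\gamma_+$ surface integral, again via a hitting-time parametrization analogous to the $t_0>0$ case, with the factor $\beta_\varepsilon(V(t_1))\cdot n_{X(t_1)}$ emerging from the Jacobian of the map $(x,v)\mapsto(t_1,S_{X(t_1)},V(t_1))$. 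Your proposal, which only integrates $s\in[t_0,t]$, never encounters the forward exit set, so the $\gamma_+$ piece of the boundary term in~(\ref{Def:weak-formulation-app-pb}) has no source. These two gaps are the genuine content of the paper's proof; the rest of your plan (the $\bar Q^\varepsilon$ adjointness via a translation in $v$, the $L^1$-continuity in $t$) is fine.
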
 
 
 \begin{proof} 
 	We will show that the mild solution $F \!\eqdef \! f^{\varepsilon,a,n}$ (Definition~\ref{Def:mild-sol-app-pb}) obtained from Lemma \ref{existence:mild-sol} is indeed a weak solution (Definition \ref{Def:weak-sol-app-pb}) of the approximate problem (\ref{app-eq}) \-- (\ref{modified-BC}) by deriving the weak formulation (\ref{Def:weak-formulation-app-pb}) from formula~(\ref{mild-sol-integral-formula}).
 	
 	To be specific, we start by choosing a test function $\psi\!\in\! C^1$ as in Definition \ref{Def:weak-sol-app-pb}  and defining the forward stopping-time  $t_1\!=\!t_1(t,x,v)$  to be the minimum value of  $\tau \geq t$  such that  $X(\tau ;t,x,v)\in \partial\Omega$  for each given $(t,x,v)\in (0,T]\times\Omega\times\mathbb{R}^3$. Also, recall that $t_0$ is defined to be the backward stopping time of the trajectory. Then we proceed as follows.
 	
 	First, we compute
 	\begin{multline*}
 	I \eqdef  \iint_{\Omega\times\mathbb{R}^3}\int_{t_0}^{t_1} F\big(s,X(s),V(s)\big)  \frac{\partial}{\partial s} \left\{\psi\big(s,X(s),V(s)\big)  \frac{\partial\big(X(s),V(s)\big)}{\partial (x,v)}\right\} ds dxdv \\[3pt]
 	=\iint_{\Omega\times\mathbb{R}^3}\int_{t_0}^{t_1} F\big(s,X(s),V(s)\big)  \frac{\partial}{\partial s} \left\{\psi\big(s,X(s),V(s)\big) \right\} \frac{\partial\big(X(s),V(s)\big)}{\partial (x,v)} ds dxdv \\
 	+\iint_{\Omega\times\mathbb{R}^3}\int_{t_0}^{t_1} F\big(s,X(s),V(s)\big)   \left\{\psi\big(s,X(s),V(s)\big)  \frac{\partial\big(\frac{\partial}{\partial s}X(s),\frac{\partial}{\partial s}V(s)\big)}{\partial (x,v)}\right\} ds dxdv \\
 	 = \iint_{\Omega\times\mathbb{R}^3}\int_{t_0}^{t_1} F(s,x,v) \Big[ \partial_t \psi +  \Big(\big\{\beta_\varepsilon(v) + [v\!-\!\beta_\varepsilon(v)]  \eta_\varepsilon(x) \big\}  \Big)\cdot\nabla_x \psi\\ - \mathbf{B} \cdot \nabla_v\psi\Big] ds dxdv \\
 	 +\iint_{\Omega\times\mathbb{R}^3}\int_{t_0}^{t_1} F\big(s,X(s),V(s)\big)   \psi\big(s,X(s),V(s)\big)  \\\cdot\left(
 	 \begin{array}{c|c}
 	 &  \\[-12pt]
 	 \nabla_{\!x}\mathbf{W_{\!\varepsilon}}  &  \nabla_{\!v}\mathbf{W_{\!\varepsilon}} \\[3pt]
 	 \hline \\[-10pt]
 	 -\nabla_{\!x}\mathbf{B}  &  -\nabla_{\!v}\mathbf{B} \\[-12pt]
 	 & 
 	 \end{array}
 	 \right) _{\left(X(s),V(s)\right)}\frac{\partial(X(s),V(s))}{\partial(x,v)} ds dxdv, \end{multline*} by \eqref{system-ODE}. Then note that $\{(s,x,v):(x,v)\in \Omega\times\mathbb{R}^3, t_0<s<t_1\}=\{(s,x,v):s\in (0,t),\ V(s)\in \rth, \ X(s)\in \Omega\}.$ Therefore, we take a change of variables  $(s,x,v)\mapsto$\newline$\big(s,X(s),V(s)\big)$ and obtain
 	 \begin{multline*}
 	I = \iint_{\Omega\times\mathbb{R}^3}\int_{0}^{t} F(s,x,v) \Big[ \partial_t \psi + \nabla_{\!x}\cdot \Big(\big\{\beta_\varepsilon(v) + [v\!-\!\beta_\varepsilon(v)]  \eta_\varepsilon(x) \big\} \psi \Big)\\ - \nabla_v\cdot\big(\mathbf{B} \psi\big)\Big] ds dxdv \\[3pt]
 	 = \int_{0}^{t} \!\iint_{\Omega\times\mathbb{R}^3} F(\tau,x,v)\Big[ \partial_t \psi + \nabla_{\!x}\cdot \Big(\big\{\beta_\varepsilon(v) + [v\!-\!\beta_\varepsilon(v)]  \eta_\varepsilon(x) \big\} \psi \Big)\\ - \nabla_v\cdot\big(\mathbf{B} \psi\big)\Big] dxdv d\tau ,
 	\end{multline*}
 	by the chain rule with (\ref{characteristics-X}) and (\ref{characteristics-V}). 
 	
 	On the other hand, we observe
 	\begin{align*}
 	II &\eqdef  \iint_{\Omega\times\mathbb{R}^3}\int_{t_0}^{t_1} \bar{f_0}\big(X(t_0),V(t_0)\big)  \frac{\partial}{\partial s} \left\{\psi\big(s,X(s),V(s)\big)  \frac{\partial\big(X(s),V(s)\big)}{\partial (x,v)}\right\} ds dxdv \\[3pt]
 	& = \iint_{\Omega\times\mathbb{R}^3} \bar{f_0}\big(X(t_0),V(t_0)\big)  \Bigg\{\psi\big(t_1,X(t_1),V(t_1)\big)  \frac{\partial\big(X(t_1),V(t_1)\big)}{\partial (x,v)} \\
 	&\qquad\qquad\qquad\qquad\qquad\qquad - \psi\big(t_0,X(t_0),V(t_0)\big)  \frac{\partial\big(X(t_0),V(t_0)\big)}{\partial (x,v)}\Bigg\}  dxdv \\[3pt]
 	& =\iint_{\Omega\times\mathbb{R}^3} \bar{f_0}\big(X(t_0),V(t_0)\big)\psi\big(t_1,X(t_1),V(t_1)\big)  \frac{\partial\big(X(t_1),V(t_1)\big)}{\partial (x,v)}dxdv \\
 	&\quad-\iint_{\{\Omega\times\mathbb{R}^3\}\cap \{t_0>0\}}(\gamma_-F\psi)\big(t_0,X(t_0),V(t_0)\big)  \frac{\partial\big(X(t_0),V(t_0)\big)}{\partial (x,v)} dxdv\\
 	&\quad-\iint_{\{\Omega\times\mathbb{R}^3\}\cap \{t_0=0\}} f_0\big(X(0),V(0)\big)\psi\big(0,X(0),V(0)\big)  \frac{\partial\big(X(0),V(0)\big)}{\partial (x,v)} dxdv\\
 	& = \iint_{\Omega\times\mathbb{R}^3} \bar{f_0}\big(X(t_0),V(t_0)\big)  \psi(t_1,x,v)  dxdv \\
 	&\quad-\int_{\Sigma^t_-}|\beta_\varepsilon(v)\cdot n_x|\gamma_- F\psi dS_xdvd\tau
 	 - \iint_{\Omega\times\mathbb{R}^3} \bar{f_0}\big(x,v\big)  \psi(0,x,v)  dxdv,
 	\end{align*}
 	where in the last equality we made the change of variables  $(x,v)\mapsto\big(X(t_1),V(t_1)\big)$  for the first term and  $(x,v)\mapsto\big(X(t_0),V(t_0)\big)$  for the third term with the definition of  $\bar{f_0}$. Furthermore, the second term in the last equality was obtained via the change of variables $(x,v)\mapsto(t_0,S_{X(t_0)},V(t_0))$ with \begin{multline*}\frac{\partial\big(X(t_0),V(t_0)\big)}{\partial (x,v)}=\frac{\partial\big(X(t_0),V(t_0)\big)}{\partial (t_0,S_{X(t_0)},V(t_0))}\frac{\partial\big(t_0,S_{X(t_0)},V(t_0)\big)}{\partial (x,v)}\\
 	=\beta_\varepsilon (V(t_0))\cdot n_{X(t_0)}\frac{\partial\big(t_0,S_{X(t_0)},V(t_0)\big)}{\partial (x,v)}\end{multline*} for the second term as $\eta_\varepsilon(X(t_0))=0$ at the boundary. 
 	
 	Lastly, we observe that
 	\begin{align*}
 	III &\eqdef  \iint_{\Omega\times\mathbb{R}^3}\int_{t_0}^{t_1}\! 
 	\left(\int_{t_0}^{s} Q^\varepsilon[F]\big(\tau,X(\tau),V(\tau)\big)d\tau \right) \\
 	&\qquad\qquad\qquad\qquad\qquad\qquad \times
 	\frac{\partial}{\partial s} \left\{\psi\big(s,X(s),V(s)\big)  \frac{\partial\big(X(s),V(s)\big)}{\partial (x,v)}\right\} ds dxdv \\[5pt]
 	& = \iint_{\Omega\times\mathbb{R}^3}\int_{t_0}^{t_1}
 	Q^\varepsilon[F]\big(\tau,X(\tau),V(\tau)\big)\\
 	 &\qquad\qquad\qquad\qquad\qquad \times
 	\int_{\tau}^{t_1}\! \frac{\partial}{\partial s} \left\{\psi\big(s,X(s),V(s)\big)  \frac{\partial\big(X(s),V(s)\big)}{\partial (x,v)}\right\} ds d\tau dxdv \\[7pt]
 	& = \iint_{\Omega\times\mathbb{R}^3}\int_{t_0}^{t_1}
 	Q^\varepsilon[F]\big(\tau,X(\tau),V(\tau)\big)
 	\Bigg\{\psi\big(t_1,X(t_1),V(t_1)\big)  \frac{\partial\big(X(t_1),V(t_1)\big)}{\partial (x,v)} \\
 	&\qquad\qquad\qquad\qquad\qquad\qquad\qquad - \psi\big(\tau,X(\tau),V(\tau)\big)  \frac{\partial\big(X(\tau),V(\tau)\big)}{\partial (x,v)}\Bigg\}  d\tau dxdv \\[3pt]
 	& = \iint_{\Omega\times\mathbb{R}^3} \Big\{F\big(t_1,X(t_1),V(t_1)\big) - \bar{f_0}\big(X(t_0),V(t_0)\big) \Big\} \\[-3pt]
 	&\qquad\qquad\qquad\qquad\qquad\qquad\qquad \cdot \psi\big(t_1,X(t_1),V(t_1)\big)  \frac{\partial\big(X(t_1),V(t_1)\big)}{\partial (x,v)}  dxdv \\
 	&  \ \ \ \  - \iint_{\Omega\times\mathbb{R}^3}\int_{t_0}^{t_1}
 	Q^\varepsilon[F]\big(\tau,X(\tau),V(\tau)\big)  \\
 	&\qquad\qquad\qquad\qquad\qquad\qquad\qquad \cdot\psi\big(\tau,X(\tau),V(\tau)\big)  \frac{\partial\big(X(\tau),V(\tau)\big)}{\partial (x,v)}  d\tau dxdv \\[9pt]
 	& = \iint_{\{\Omega\times\mathbb{R}^3\}\cap \{t_1=t\}} F\big(t,X(t),V(t)\big) \psi\big(t,X(t),V(t)\big)  \frac{\partial\big(X(t),V(t)\big)}{\partial (x,v)}  dxdv\\
 	&\quad+ \iint_{\{\Omega\times\mathbb{R}^3\}\cap \{t_1<t\}} \gamma_+F\big(t_1,X(t_1),V(t_1)\big)\\&\qquad\qquad\qquad\qquad\qquad\qquad\qquad\cdot\psi\big(t_1,X(t_1),V(t_1)\big)  \frac{\partial\big(X(t_1),V(t_1)\big)}{\partial (x,v)}  dxdv\\
 	 &\quad- \iint_{\Omega\times\mathbb{R}^3} \bar{f_0}\big(X(t_0),V(t_0)\big)  \psi(t_1,x,v)  dxdv \\
 	&  \quad- \iint_{\Omega\times\mathbb{R}^3}\int_{0}^{t}
 	Q^\varepsilon[F](\tau,x,v)  \psi(\tau,x,v)  d\tau dxdv \\[5pt]
 	& = \iint_{\Omega\times\mathbb{R}^3} F(t,x,v)\psi(t,x,v)  dxdv +\int_{\Sigma^t_+}|\beta_\varepsilon(v)\cdot n_x|\gamma_+ F\psi dSdvd\tau\\
 	&\quad - \iint_{\Omega\times\mathbb{R}^3} \bar{f_0}\big(X(t_0),V(t_0)\big)  \psi(t_1,x,v)  dxdv \\
&\qquad\qquad\qquad\qquad\qquad\qquad\qquad 	 - \int_{0}^{t}\! \iint_{\Omega\times\mathbb{R}^3}
 	F(\tau,x,v)  \bar{Q}^\varepsilon[\psi](\tau,x,v)  dxdv d\tau .
 	\end{align*}
 	In the first step we use the Fubini theorem to interchange two integrals. The fourth equality is due to a substitution using (\ref{mild-sol-integral-formula}). The next step is by the change of variables  $(x,v)\mapsto\big(X(t_1),V(t_1)\big)$  for the first term and  $(\tau,x,v)\mapsto\big(\tau,X(\tau),V(\tau)\big)$  for the third term. Also, the second term in the last equality was obtained via the change of variables $(x,v)\mapsto(t_1,S_{X(t_1)},V(t_1))$ with \begin{multline*}\frac{\partial\big(X(t_1),V(t_1)\big)}{\partial (x,v)}=\frac{\partial\big(X(t_1),V(t_1)\big)}{\partial (t_1,S_{X(t_1)},V(t_1))}\frac{\partial\big(t_1,S_{X(t_1)},V(t_1)\big)}{\partial (x,v)}\\
 	=\beta_\varepsilon (V(t_1))\cdot n_{X(t_1)}\frac{\partial\big(t_1,S_{X(t_1)},V(t_1)\big)}{\partial (x,v)}\end{multline*} for the second term as $\eta_\varepsilon(X(t_1))=0$ at the boundary. 
 	
 	From the representation formula (\ref{mild-sol-integral-formula}) of a mild solution, we may equate $I$ with $II + III$, which leads to the verification of the weak formulation (\ref{Def:weak-formulation-app-pb}) in the time interval $[0,t]$. This completes the proof.
 \end{proof}

 \subsection{Adjoint problem}
 We will use duality argument to obtain the \emph{uniform} $L^\infty$ and $L^1$ estimates for the approximate solutions $F \!\eqdef \! f^{\varepsilon,a,n}$. To achieve that, we choose the smooth solutions of the adjoint problem as test functions in the weak formulation~(\ref{Def:weak-formulation-app-pb}) of Definition \ref{Def:weak-sol-app-pb} (see the proof of Lemma~\ref{MP-L^infty-app-sol} and Lemma~\ref{L^1-app-sol}).
 \begin{remark}
 Although these smooth solutions of the adjoint problem may not be compactly supported, since they can be approximated by test functions with compact supports as in Definition \ref{Def:weak-sol-app-pb}, they can still satisfy formula~(\ref{Def:weak-formulation-app-pb}) of Definition \ref{Def:weak-sol-app-pb}.
 \end{remark}
 \begin{definition} [Adjoint (backward) problem of approximate problem] \label{Def:adj-pb-app-pb}
 	Let \newline $\psi_T \in C_c^\infty(\Omega\times\mathbb{R}^3)$ be a smooth function satisfying the compatibility condition:
 	\begin{equation} \label{compatibility-condition}
 	\begin{array}{rcl} 
 	& \psi_T(x,v)=0, & \\[5pt]
 	\text{on }\; & \big\{ (x,v)\in\Omega\times\mathbb{R}^3 \!:  x_{\!\perp}^{ 2} + |\beta_\varepsilon(v)|^2 < \delta \text{,  for some } \delta \text{ small}  \big\}. &
 	\end{array}
 	\end{equation}
 	The adjoint equation of approximate equation (\ref{app-eq}) with the terminal condition and dual modified specular reflection boundary condition is as follows:
 	\begin{align}
 	\bar{\mathcal{L}}^\varepsilon \psi \eqdef  &\;  \partial_t \psi + \nabla_{\!x}\cdot \Big(\big\{\beta_\varepsilon(v) + [v\!-\!\beta_\varepsilon(v)]  \eta_\varepsilon(x) \big\} \psi \Big) - \nabla_{\!v}\cdot\big(\mathbf{B}  \psi \big) + \bar{Q}^\varepsilon[\psi] = 0 \nonumber \\[3pt]
 	& \bar{Q}^\varepsilon[\psi](t,x,v) \eqdef  \frac{2}{\varepsilon^2} \int_{\mathbb{R}^3}\! \big[\psi(t,x,v\!-\!\varepsilon u)-\psi(t,x,v)\big] \xi(u) du \label{adj-eq-Q-bar} \\[5pt] 
 	& \psi(T,x,v) = \psi_T(x,v) \label{adj-terminal} \\[5pt]
 	& \gamma_+ \psi = (1\!-\!a)  \mathcal{R^*}\!\left[\gamma_- \psi\right] \label{adj-dual-modified-specular-BC}
 	\end{align}
 \end{definition}

 \begin{lemma} [Adjoint Problem] \label{Lem:adj-pb}
 	Let $\psi_T \in C_c^\infty(\Omega\times\mathbb{R}^3)$ be a smooth data at  $t\!=\!T$  satisfying the compatibility condition (\ref{compatibility-condition}).
 	\begin{enumerate}
 		\item (Existence \& Regularity) Then there exists a smooth solution $\psi\in C^\infty\!\big((0,T)$
 		\newline $\times\Omega\times\mathbb{R}^3 \big)  \cap  C \!\left([0,T]  ; L^1 (\Omega \times \mathbb{R}^3) \right)  \cap  L^\infty \!\left([0,T]  ; L^\infty (\Omega \times \mathbb{R}^3) \right)$ to the adjoint problem (\ref{adj-eq-Q-bar}) \-- (\ref{adj-dual-modified-specular-BC}) backward in time.
 		\item (Max/Min principle, Non-negativity \& $L^\infty$ estimate) Moreover, with non-negative terminal data  $\psi_T\geq 0$, then  $\psi\geq 0$  in  $\bar{Q}_T = [0,T]\times\bar{\Omega}\times\mathbb{R}^3$. Generally, we have the maximum principle:
 		\begin{equation*}
 		\max_{\bar{Q}_T} |\psi| = \max_{\bar{\Omega}\times\mathbb{R}^3} |\psi_T|,
 		\end{equation*}
 		and consequently the $L^\infty$ estimate:
 		\begin{equation*}
 		\|\psi\|_{L^\infty (Q_T)}  \leq  \|\psi_T\|_{L^\infty (\Omega \times \mathbb{R}^3)}.
 		\end{equation*}
 		\item ($L^1$ estimate)  When  $\psi_T\geq 0$, $\psi$ is also integrable in $(x,v)$  for each fixed  $t\!\in\! [0,T]$, and the $L^1$ norm   $\|\psi(t)\|_{L^1 (\Omega \times \mathbb{R}^3)} = \iint_{\Omega \times \mathbb{R}^3}\psi(t,\cdot,\cdot)dxdv$  does not increase backward in time, i.e.,$\frac{d}{dt}\iint_{\Omega \times \mathbb{R}^3}\psi(t,\cdot,\cdot)dxdv \geq 0$, for all  $t\in [0,T]$. In particular,
 		\begin{equation*}
 		\iint_{\Omega \times \mathbb{R}^3}\psi(0,\cdot,\cdot ) dxdv  \leq  \iint_{\Omega \times \mathbb{R}^3}\psi(T,\cdot,\cdot ) dxdv.
 		\end{equation*}
 	\end{enumerate}
 \end{lemma}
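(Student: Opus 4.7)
The plan is to attack the three parts of the lemma in order, reusing the characteristic-plus-fixed-point machinery already deployed for the forward approximate problem and exploiting the pointwise sign structure of $\bar{Q}^\varepsilon$.

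For existence and regularity I would set up adjoint characteristics running forward in $\sigma=T-t$, namely $\frac{dX}{d\sigma}=-\mathbf{W}_\varepsilon$ and $\frac{dV}{d\sigma}=\mathbf{B}$, and treat $\bar{Q}^\varepsilon[\psi]$ as a Duhamel source. A short-time contraction in $C([T-\tau,T];L^1\cap L^\infty)$ mirroring the proof of Lemma~\ref{existence:mild-sol}, but with the dual BC now read $\gamma_+$ in terms of $\gamma_-$, produces a unique mild solution; a standard continuation extends it to $[0,T]$ since the contraction time depends only on $\varepsilon$. For smoothness I would differentiate the equation in $t,x,v$ and rerun the scheme: $\mathbf{W}_\varepsilon$ and $\mathbf{B}$ are $C^\infty$, the kernel $\xi$ in $\bar{Q}^\varepsilon$ is $C_c^\infty$, and $\psi_T\in C_c^\infty$ together with the compatibility condition \eqref{compatibility-condition} and the cutoffs $\lambda_\varepsilon,\eta_\varepsilon$ ensure that no characteristic carrying nonzero data ever grazes $\gamma_0$, so derivatives propagate cleanly and the $(1-a)$ factor in the dual BC transfers smoothness across reflection.

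For the max/min principle and the $L^\infty$ bound I would argue by contradiction after the exponential shift $\phi=e^{-\lambda(T-t)}\psi$, with $\lambda$ chosen larger than $\|\nabla_x\!\cdot\!\mathbf{W}_\varepsilon\|_\infty+\|\nabla_v\!\cdot\!\mathbf{B}\|_\infty$. If $\phi$ attained a strict positive interior maximum at $(t^*,x^*,v^*)$ with $t^*<T$, then $\partial_t\phi=0$, $\nabla_x\phi=\nabla_v\phi=0$, and crucially $\bar{Q}^\varepsilon[\phi](t^*,x^*,v^*)\le 0$ because $\phi(t^*,x^*,v^*+\varepsilon u)-\phi(t^*,x^*,v^*)\le 0$ for every $u$; plugging these into the $\phi$-equation with the above choice of $\lambda$ forces $\phi(t^*,x^*,v^*)\le 0$, a contradiction. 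The phase-boundary analysis splits into three cases: a max on $\gamma_0$ is ruled out because $\mathbf{W}_\varepsilon\equiv 0$ there and the compatibility condition \eqref{compatibility-condition} forces $\psi\equiv 0$ in a neighborhood of the grazing set; a max on $\gamma_+$ is ruled out by \eqref{adj-dual-modified-specular-BC}, which gives $\gamma_+\psi=(1-a)\gamma_-\psi(R_xv)$, strictly less than the supposed max by $1-a<1$; on $\gamma_-$ the adjoint characteristics exit so a max there would propagate inward and reduce to the interior case. Non-negativity follows by applying the same argument to $-\psi$, and then $\|\psi\|_\infty\le \|\psi_T\|_\infty$ follows from the bound on $|\psi|$.

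The $L^1$ estimate drops out by integrating the equation over $\Omega\times\mathbb{R}^3$. The term $\iint\nabla_v\cdot(\mathbf{B}\psi)\,dxdv$ vanishes, and $\iint\bar{Q}^\varepsilon[\psi]\,dxdv=0$ follows from the translation $v\mapsto v-\varepsilon u$ inside $\xi(u)$, inheriting the divergence-form mean-zero property the paper highlights. The $x$-divergence contributes $\int_\gamma\lambda_\varepsilon(v_\perp)v_\perp\gamma\psi\,dS_xdv$, since $\eta_\varepsilon\equiv 0$ on $\partial\Omega$; splitting $\gamma=\gamma_+\cup\gamma_-$, applying the BC, and changing variables $v\mapsto R_xv$ on $\gamma_+$ (which preserves $|v_\perp|$ and $\lambda_\varepsilon$ viewed as a function of $|v_\perp|$) collapses this to $-a\int_{\gamma_-}\lambda_\varepsilon|v_\perp|\gamma_-\psi\,dS_xdv$. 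Once non-negativity of $\psi$ is in hand this yields $\frac{d}{dt}\iint\psi\,dxdv=a\int_{\gamma_-}\lambda_\varepsilon|v_\perp|\gamma_-\psi\,dS_xdv\ge 0$, hence the desired monotonicity and $\iint\psi(0)\le\iint\psi(T)$. The step I expect to be the main obstacle is the maximum principle: the non-local $\bar{Q}^\varepsilon$ defeats any classical ``touching'' argument based on pointwise second-order information, and one must carefully verify that the exponential shift together with the one-sided sign $\bar{Q}^\varepsilon[\psi](\mathrm{max})\le 0$ really absorbs the zero-order coefficients uniformly in $\varepsilon,a,n$, so that the resulting $L^\infty$ bound is independent of these regularization parameters and compatible with the subsequent passage to the limit.
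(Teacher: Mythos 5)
Your overall architecture matches the paper's (characteristics-plus-contraction for the mild solution, a touching argument at an extremum for the max/min principle, direct integration for the $L^1$ estimate), but there is a genuine gap in Part~2. You pass to $\phi=e^{-\lambda(T-t)}\psi$ with $\lambda > \|\nabla_x\!\cdot\mathbf{W}_\varepsilon\|_\infty + \|\nabla_v\!\cdot\mathbf{B}\|_\infty$, establish that $\phi$ cannot attain a strictly positive maximum off $\{t=T\}$, and then claim $\|\psi\|_\infty \le \|\psi_T\|_\infty$. But the conclusion of your touching argument is $\phi(t,x,v)\le\max\psi_T$, i.e.\ $\psi(t,x,v)\le e^{\lambda(T-t)}\max\psi_T$, which carries a permanent $e^{\lambda T}$ loss since $\lambda$ is bounded below by $\|\nabla_v\!\cdot\mathbf{B}\|_\infty>0$ and cannot be sent to zero. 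This is not a cosmetic issue: Lemma~\ref{L^1-app-sol} relies on the \emph{sharp} bound $\|\psi\|_{L^\infty}\le\|\psi_T\|_{L^\infty}\le 1$ to conclude $\|F(t)\|_{L^1}\le\|f_0\|_{L^1}$ without any $e^{CT}$ factor, and that in turn feeds the $L^1$ statement of Theorem~\ref{main-thm}. The paper sidesteps this by using the \emph{additive} auxiliary function $\psi^k=\psi+\bigl[k+k(T-t)\bigr]e^{L(T-t)}$: because the perturbation is proportional to $k$, the strict-positivity conclusion $\psi^k>0$ survives the limit $k\to 0$ and delivers $\psi\ge 0$ exactly, with no multiplicative loss; the $L^\infty$ bound then follows by applying the result to $M-\psi$ and $\psi-m$. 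If you replace the multiplicative shift with this additive one, the rest of your argument goes through.

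Two smaller points in the same part. First, your claim that the compatibility condition~\eqref{compatibility-condition} ``forces $\psi\equiv 0$ in a neighborhood of the grazing set'' is false for $t<T$: the compatibility condition only kills $\psi_T$ there, and both the velocity characteristic $\dot V=\mathbf{B}$ and the nonlocal operator $\bar{Q}^\varepsilon$ transport information from outside that region, so $\psi(t)$ need not vanish near $\gamma_0$. The observation that saves the grazing case is the one you also mention, namely that $\mathbf{W}_\varepsilon\equiv 0$ in a neighborhood of $\gamma_0$ so the transport term contributes nothing at the extremum; the $\psi\equiv 0$ claim should simply be dropped. Second, the $\gamma_-$ case is stated only as ``the characteristics exit so the max propagates inward''; the actual mechanism (as in the paper) is a one-sided directional-derivative estimate: at a min on $\gamma_-$ the vector $\beta_\varepsilon(v)$ points into $\Omega$, so $\beta_\varepsilon(v)\cdot\nabla_x\psi\ge 0$, and the transport term has the right sign to force $\bar{\mathcal{L}}^\varepsilon\psi\ge 0$. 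Parts~1 and~3 of your proposal are essentially the paper's argument and are fine as written.
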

 
 \begin{proof}
 \noindent\textbf{ (1) Existence \& Regularity.}
 		  		\begin{enumerate}
 			\item[(I)] For the existence of a mild solution $$\psi\in C \!\left([0,T]  ; L^1 (\Omega \times \mathbb{R}^3) \right)  \cap  L^\infty \!\left([0,T]  ; L^\infty (\Omega \times \mathbb{R}^3) \right),$$ with an analog of formula~(\ref{mild-sol-integral-formula}), we apply a fixed-point argument similar to Lemma \ref{existence:mild-sol}.
 			 
 			\item[(I\!I)] To prove the regularity (smoothness) of the solution, since $\psi_T \in C^\infty$ and satisfies the compatibility condition (\ref{compatibility-condition}), we can get the integral equation corresponding to the derivatives of $\psi$ by differentiating the integral representation for $\psi$ itself. Then again follow a similar procedure as in the proof of Lemma~\ref{existence:mild-sol} to show that the solution is indeed smooth and thus is a classical solution.
 		\end{enumerate}
 		\noindent\textbf{(2) Max/Min principle, Non-negativity \& $L^\infty$ estimate.}
 		  For the case $\psi_T\geq 0$ (together with all previous assumptions on $\psi_T$), we will show the non-negativity of $\psi$ as follows:
 			  			\begin{enumerate}
 				\item[(i)] Assuming first ``$\bar{\mathcal{L}}^\varepsilon \psi < 0$'' with ``$\psi(T)=\psi_T+k$ ($k\!>\!0$, small)'' instead of (\ref{adj-eq-Q-bar}), (\ref{adj-terminal}) and (\ref{adj-dual-modified-specular-BC}), we will show that ``$\psi>0$  in  $\bar{Q}_T = [0,T]\times\bar{\Omega}\times\mathbb{R}^3$.''
 				Define $$ T_* \eqdef  \inf \left\{ T_1\in [0,T] :  \psi>0 \;\text{ in }  Q_{T_1,T}\eqdef [T_1,T]\times\Omega\times\mathbb{R}^3 \right\}.$$ Since $\psi|_{t=T} = \psi_T+k \geq k>0$, by continuity of $\psi$, we have $T_*<T$, so  $T_*\in [0,T)$. Also, since $\psi\!\geq\! 0$ in $Q_{T_*,T}$ and smooth, its continuous extension attains the minimum in $\overline{Q_{T_*,T}}$. We may assume (by contradiction) that this minimum value is $0$, since otherwise $\psi>0$  in  $\bar{Q}_T$ by the definition of $T_*$ and thus we are done. 
 				
 				  We claim that ``the minimum $0$  cannot be attained at  $\overline{Q_{T_*,T}} \backslash \{t\!=\!T\}\times\bar{\Omega}\times\mathbb{R}^3$'' by arguing that  $\bar{\mathcal{L}}^\varepsilon \psi \geq 0$  otherwise. More precisely, we break it down into the following cases:

 				\begin{itemize}
 					\item[$^{_\bullet}$] If ``Min $=\!0$'' is attained at some \emph{interior point} $(t_0,x_0,v_0)\in (T_*,T)\times\Omega\times\mathbb{R}^3$ or at some $(T_*,x_0,v_0)$ with $(x_0,v_0)\in \Omega\times\mathbb{R}^3$, then at this point we have  $\partial_t\psi \geq 0$,  $\psi = \nabla_{\!x}\psi = \nabla_{\!v}\psi =0$  so that 
 					\begin{align*}
 					& \nabla_{\!x}\cdot \Big(\big\{\beta_\varepsilon(v) + [v\!-\!\beta_\varepsilon(v)]  \eta_\varepsilon(x) \big\} \psi \Big) =0, \\[5pt]
 					& \nabla_{\!v}\cdot\big(\mathbf{B}  \psi \big) 
 					= \mathbf{B}\cdot\nabla_{\!v}\psi 
 					 + (\nabla_{\!v}\cdot\mathbf{B}) \psi 
 					 =  0,
 					\end{align*}
 					and  $\bar{Q}^\varepsilon[\psi] \geq 0$  by the definition of  $\bar{Q}^\varepsilon[\psi]$.
 					 
 					\item[$^{_\bullet}$] If ``Min $=\!0$'' is attained at some \emph{incoming boundary point} $(t_0,x_0,v_0)\in [T_*,T)\times\gamma_-$, then at that point  $\partial_t\psi \geq 0$,  $\psi = \nabla_{\!v}\psi =0$  so that  $\nabla_{\!v}\cdot\big(\mathbf{B}  \psi \big) = 0$,
 					and again  $\bar{Q}^\varepsilon[\psi] \geq 0$.
 					Also, we have instead
 					\begin{equation*}
 					\nabla_{\!x}\cdot \Big(\big\{\beta_\varepsilon(v) + [v\!-\!\beta_\varepsilon(v)]  \eta_\varepsilon(x) \big\} \psi \Big) =  \beta_\varepsilon(v)\cdot\nabla_{\!x}\psi  \geq 0
 					\end{equation*}
 					by observing that  $\beta_\varepsilon(v)\!\cdot\!\nabla_{\!x}\psi = \big|\beta_\varepsilon(v)\big| D_{\vec{\hat{v}}}\psi$, where the directional derivative (with respect to $x$)  $D_{\vec{\hat{v}}}\psi \geq 0$  in the direction of $\vec{v}$ at the minimum point with $(x_0,v_0)$ being in the incoming set.

 					\item[$^{_\bullet}$] If ``Min $=\!0$'' is attained at some \emph{outcoming boundary point} $(t_0,x_0,v_0)\in [T_*,T)\times\gamma_+$, then the minimum is also attained at the point\newline  $(t_0,x_0,R_{x_0}v_0)\in [T_*,T)\times\gamma_-$ on the incoming boundary by the boundary condition (\ref{adj-dual-modified-specular-BC}), which reduces to the previous case.
 					 
 					\item[$^{_\bullet}$] If ``Min $=\!0$'' is attained at some \emph{``grazing'' boundary point} $(t_0,x_0,v_0)\in [T_*,T)\times\gamma_0$, then again at that point  $\partial_t\psi \geq 0$,  $\psi = \nabla_{\!v}\psi =0$\; so  $\nabla_{\!v}\cdot\big(\mathbf{B}  \psi \big) = 0$,
 					and  $\bar{Q}^\varepsilon[\psi] \geq 0$. Moreover, since  $\beta_\varepsilon(v) + [v\!-\!\beta_\varepsilon(v)]  \eta_\varepsilon(x) \equiv \vec{0}$  in a neighborhood of the grazing set $\big\{(x,v): |x_{\perp}|, |v_{\perp}| \leq\varepsilon \big\}$, we also have
 					\begin{equation*}
 					\nabla_{\!x}\cdot \Big(\big\{\beta_\varepsilon(v) + [v\!-\!\beta_\varepsilon(v)]  \eta_\varepsilon(x) \big\} \psi \Big)  =  0 .
 					\end{equation*}
 				\end{itemize}
 				In all the cases above, we would have  $\bar{\mathcal{L}}^\varepsilon \psi \geq 0$  at some point, a contradiction. So the claim holds.
 				
 				  Combined with the fact that $\psi \!=\! \psi_T+k >0$ at $t=T$, we have  $\psi>0$  in  $\overline{Q_{T_*,T}}$. Therefore, $T_*=0$  by the definition of  $T_*$  and the continuity of  $\psi$  (because if  $T_*>0$, then by the continuity of  $\psi$,  $\psi\!>\!0$  in $Q_{T_*\!-\delta, T}$ for some $\delta\!>\!0$, which contradicts the definition of  $T_*$ ), and hence we obtain  $\psi>0$  in  $\bar{Q}_T$.
 				 
 				\item[(ii)] Now back to the general case: ``$\bar{\mathcal{L}}^\varepsilon \psi \leq 0$'' with (\ref{adj-terminal}) and (\ref{adj-dual-modified-specular-BC}), we then show that ``$\psi \geq 0$  in  $\bar{Q}_T = [0,T]\times\bar{\Omega}\times\mathbb{R}^3$'' (by reducing to the model-case of (i) with the aid of auxiliary functions), and subsequently ``$\min_{\bar{Q}_T} \psi = \min_{\bar{\Omega}\times\mathbb{R}^3} \psi_T = 0$'' as follows.				
 				  Choose  $L>0$  sufficiently large such that
 				$$L  \geq  \big(v\!-\!\beta_\varepsilon(v)\big)\cdot\nabla_{\!x}\eta_\varepsilon(x)  +  \nabla_{\!v}\cdot\mathbf{B}$$
 				for all  $(t,x,v) \in \bar{Q}_T$. 
 				Let $$\psi^k(t,x,v) \eqdef  \psi(t,x,v) + \big[k+k(T\!-\!t)\big] e^{L(T-t)}$$ for  $k>0$  small. 
 				Then 
 				\begin{align*}
 				\bar{\mathcal{L}}^\varepsilon \psi^k  
 				&= \;  \bar{\mathcal{L}}^\varepsilon \psi -k  e^{L(T-t)} \\
 				& \; +\big\{\!-L +\big(v\!-\!\beta_\varepsilon(v)\big)\cdot\nabla_{\!x}\eta_\varepsilon(x)  + \nabla_{\!v}\cdot\mathbf{B}\big\} \big[k+k(T\!-\!t)\big] e^{L(T-t)} \\[2pt] 
 				&\leq \;  \bar{\mathcal{L}}^\varepsilon \psi -k  e^{L(T-t)} \leq \; -k  e^{L(T-t)}  <  0 ,
 				\end{align*} where$$
 				\psi^k|_{t=T}  =  \psi|_{t=T}+k  =  \psi_T+k  \geq  k > 0 .$$
 				Also, with $\psi$ satisfying the boundary condition (\ref{adj-dual-modified-specular-BC}), we can repeat the arguments in the model-case (i) for $\psi^k$ with some modifications of the outcoming-set case, which leads to a contradiction as well. 
 				
 				  Therefore, applying the result of (i) to $\psi^k$, we get  $\psi^k > 0$  in  $\bar{Q}_T$. 
 				Taking the limit $k\rightarrow 0$, since $\psi^k \rightarrow \psi$, we obtain  $\psi \geq 0$  in  $\bar{Q}_T$, and thus  $\min_{\bar{Q}_T} \!\psi \geq 0$. On the other hand, notice that  $\min_{\bar{Q}_T} \!\psi \leq \min_{\bar{\Omega}\times\mathbb{R}^3} \psi_T = 0$, so  $\min_{\bar{Q}_T} \!\psi = \min_{\bar{\Omega}\times\mathbb{R}^3} \psi_T = 0$.
 			\end{enumerate}

 	  From the result of (ii) applied to ``$M - \psi$'' and ``$\psi - m$'', respectively, where $M \!\eqdef \! \max_{\bar{\Omega}\times\mathbb{R}^3} \psi_T$, $m \!\eqdef \! \min_{\bar{\Omega}\times\mathbb{R}^3} \psi_T$, and that $\bar{\mathcal{L}}^\varepsilon \psi = 0$ implies  $\bar{\mathcal{L}}^\varepsilon (M\!-\!\psi) \leq 0$  and  $\bar{\mathcal{L}}^\varepsilon (\psi\!-\!m) \leq 0$, we can also get as a corollary (without the assumption ``$\psi_T\geq 0$'') that  $\max_{\bar{Q}_T} |\psi|  =  \max_{\bar{\Omega}\times\mathbb{R}^3} |\psi_T|$.  Then the $L^\infty$ estimate  $\|\psi\|_{L^\infty (Q_T)}  \leq  \|\psi_T\|_{L^\infty (\Omega \times \mathbb{R}^3)}$  follows.

 	\noindent\textbf{(3) $L^1$ estimate.}
 		If  $\psi_T\geq 0$, then we know $\psi \geq 0$  in $\bar{Q}_T$ by the above.
 		For each $t\in [0,T]$, we have
 		\begin{align*}
 		& \frac{d}{dt} \iint_{\Omega\times\mathbb{R}^3} \psi(t,x,v) dxdv 
 		= \;  \iint_{\Omega\times\mathbb{R}^3} \partial_{t}\psi dxdv \\ 
 		= &\;  -\iint_{\Omega\times\mathbb{R}^3} \nabla_{\!x}\cdot \Big(\big\{\beta_\varepsilon(v) + [v\!-\!\beta_\varepsilon(v)]  \eta_\varepsilon(x) \big\} \psi \Big) dxdv \\
 		& \;  +\iint_{\Omega\times\mathbb{R}^3} \nabla_{\!v}\cdot\big(\mathbf{B}  \psi \big) dxdv 
 		 -\iint_{\Omega\times\mathbb{R}^3} \bar{Q}^\varepsilon[\psi] dxdv & (\text{by  (\ref{adj-eq-Q-bar})}) \\
 		= &\;  -\iint_{\partial\Omega\times\mathbb{R}^3} \psi(t,x,v) \big(\beta_\varepsilon(v)\cdot n_x\big)  dS_xdv \\
 		= &\;  -\left(  \iint_{\gamma_-}\!+\iint_{\gamma_+}  \right) \psi\big(\beta_\varepsilon(v)\cdot n_x\big)  dS_xdv \\
 		= &\;  -\big[ 1-(1-a) \big] \iint_{\gamma_-} \psi\big(\beta_\varepsilon(v)\cdot n_x\big)  dS_xdv & (\text{by (\ref{adj-dual-modified-specular-BC})}) \\
 		= &\;  -a \iint_{\gamma_-} \psi\big(\beta_\varepsilon(v)\cdot n_x\big)  dS_xdv \;\geq  0 .
 		\end{align*}
 		The third equality is due to integration by parts in $x$ for the first term and in $v$ for the second term. Also, note that $\iint_{\Omega\times\mathbb{R}^3} \bar{Q}^\varepsilon[\psi] dxdv =~\!0$. The last inequality is due to the non-negativity of $\psi$ and that  $\beta_\varepsilon(v)\cdot n_x <0$  on $\gamma_-$. 
 		
 		  From the result above we can tell that the $L^1$ norm  $$\|\psi(t)\|_{L^1 (\Omega \times \mathbb{R}^3)} = \iint_{\Omega \times \mathbb{R}^3}\psi(t,\cdot,\cdot)dxdv$$  does not increase backward in time. Therefore, $$\iint_{\Omega \times \mathbb{R}^3}\psi(0,\cdot,\cdot )dxdv  \leq  \iint_{\Omega \times \mathbb{R}^3}\psi(T,\cdot,\cdot )dxdv.$$ 		 
 \end{proof}

 \subsection{Maximum principle \& $L^\infty$ estimate}
 We now go back to our original approximate Landau problem and establish the following maximum principle for weak solutions, which provides the result of uniform $L^\infty$ estimate for approximate solutions.
 
 \begin{lemma} [Maximum principle \& $L^\infty$ estimate for weak solutions of approximate problem] \label{MP-L^infty-app-sol}
 	If $f_0\in L^\infty (\Omega \times\mathbb{R}^3)$, then the weak solution $F \!\eqdef \! f^{\varepsilon,a,n}$ of the approximate problem (\ref{app-eq}) \-- (\ref{modified-BC}) (Definition~\ref{Def:weak-sol-app-pb}) satisfies that for all $t\in [0,T]$, $$\big|F(t,x,v)\big|  \leq  \|f_0\|_{L^\infty (\Omega \times \mathbb{R}^3)}$$ up to a zero-measure set on $\Omega \times \mathbb{R}^3$, which means that
 	\begin{equation*}
 	\|F\|_{L^\infty \left([0,T] ;  L^\infty (\Omega \times \mathbb{R}^3) \right)}  \leq  \|f_0\|_{L^\infty (\Omega \times \mathbb{R}^3)}.
 	\end{equation*}
 \end{lemma}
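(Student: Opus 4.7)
The proof uses a duality argument with the smooth adjoint solution constructed in Lemma~\ref{Lem:adj-pb}, combined with induction on the iteration index $n$. Fix $t^{*}\in(0,T]$ and choose a non-negative $\psi_{T}\in C_c^\infty(\Omega\times\mathbb{R}^3)$ satisfying the compatibility condition~(\ref{compatibility-condition}). Lemma~\ref{Lem:adj-pb} produces a smooth non-negative solution $\psi\ge 0$ on $[0,t^{*}]$ of the adjoint problem~(\ref{adj-eq-Q-bar})--(\ref{adj-dual-modified-specular-BC}) with terminal value $\psi_{T}$. The computation in the proof of part~(3) of that lemma yields the exact $L^1$ dissipation identity
\[
\iint\psi(0,x,v)\,dxdv + a\int_{\Sigma_-^{t^{*}}}|\beta_\varepsilon\cdot n_x|\gamma_-\psi\,dS_xdvd\tau = \iint\psi_{T}(x,v)\,dxdv,
\]
and the dual BC combined with the change of variables $v\mapsto R_xv$---whose Jacobian has unit absolute value and under which $|\beta_\varepsilon\cdot n_x|$ is preserved since $\lambda_\varepsilon$ is even---gives $\int_{\Sigma_+^{t^{*}}}|\beta_\varepsilon\cdot n_x|\gamma_+\psi=(1-a)\int_{\Sigma_-^{t^{*}}}|\beta_\varepsilon\cdot n_x|\gamma_-\psi$.

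Substitute $\psi$ into the weak formulation~(\ref{Def:weak-formulation-app-pb}); the interior integral vanishes since $\bar{\mathcal{L}}^\varepsilon\psi=0$. Splitting the boundary integral into $\gamma_\pm$ parts, using the modified specular BC $\gamma_-F^n=(1-a)\mathcal{R}[\gamma_+F^{n-1}]$ (with the convention $F^0\equiv 0$) together with the dual BC $\gamma_+\psi=(1-a)\mathcal{R}^{*}[\gamma_-\psi]$, and performing the change $v\mapsto R_xv$ on one half produces the telescoping identity
\[
\iint F^n(t^{*},x,v)\psi_{T}\,dxdv + \int_{\Sigma_+^{t^{*}}}|\beta_\varepsilon\cdot n_x|\gamma_+F^n\,\gamma_+\psi\,dS_xdvd\tau = \iint f_0(x,v)\psi(0,x,v)\,dxdv + \int_{\Sigma_+^{t^{*}}}|\beta_\varepsilon\cdot n_x|\gamma_+F^{n-1}\,\gamma_+\psi\,dS_xdvd\tau.
\]

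Now I induct on $n$. The base case $n=1$ has vanishing last term (by $F^0\equiv 0$); combining $\iint f_0\psi(0)\le\|f_0\|_\infty\iint\psi(0)$ (from $\psi\ge 0$) with the two $\psi$-identities recorded above and absorbing the $\gamma_+F^1$-term on the left via that structure yields $\iint F^1(t^{*})\psi_{T}\le\|f_0\|_\infty\iint\psi_{T}$. For the inductive step $n\ge 2$, assume $\|F^{n-1}\|_\infty\le\|f_0\|_\infty$ together with the corresponding trace bound $\|\gamma_+F^{n-1}\|_\infty\le\|f_0\|_\infty$, which is inherited from the mild representation~(\ref{mild-sol-integral-formula}) along characteristics exiting through $\gamma_+$. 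Plugging this into the telescoping identity gives $\iint F^n(t^{*})\psi_{T}\le\|f_0\|_\infty\iint\psi_{T}$ after the same absorption. Since $\psi_{T}\ge 0$ was arbitrary, varying it over mollifications of indicator functions of small neighborhoods of Lebesgue points of $F^n(t^{*},\cdot,\cdot)$ delivers the pointwise a.e.\ upper bound $F^n(t^{*},x,v)\le\|f_0\|_\infty$, and applying the identical argument to $-F^n$ gives the matching lower bound.

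The main obstacle is the reappearance of the outgoing trace $\gamma_+F^n$ of the unknown in the telescoping identity, whose sign is a priori indeterminate. The resolution exploits the cancellation structure forced by the pairing of the modified specular BC of $F^n$ with the dual modified BC of $\psi$ (precisely the mechanism emphasized in the Remark following Definition~\ref{Def:weak-sol-app-pb}): the $\gamma_+F^n$-contribution on the left is absorbed using $\int_{\Sigma_+^{t^{*}}}|\beta_\varepsilon\cdot n_x|\gamma_+\psi=(1-a)\int_{\Sigma_-^{t^{*}}}|\beta_\varepsilon\cdot n_x|\gamma_-\psi$ together with the positive dissipation rate $a$ appearing in the $L^1$ identity, producing an estimate uniform in $\varepsilon$, $a$, and $n$ (and therefore suitable for the later passage to the limit in the approximation scheme). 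A parallel density reduction---replacing $f_0$ by smooth compactly supported approximants $f_0^\delta$ with $\|f_0^\delta\|_\infty\le\|f_0\|_\infty$ and invoking the uniqueness of weak solutions from Lemma~\ref{existence:mild-sol}---then removes the implicit regularity hypotheses on $F^n$ needed to make the trace manipulations rigorous.
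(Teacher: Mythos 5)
Your preliminary setup is correct: the $L^1$ dissipation identity for $\psi$, the flux balance $\int_{\Sigma_+^{t^*}}|\beta_\varepsilon\cdot n_x|\gamma_+\psi=(1-a)\int_{\Sigma_-^{t^*}}|\beta_\varepsilon\cdot n_x|\gamma_-\psi$, and the ``telescoping'' identity are all algebraically right (the evenness of $\lambda_\varepsilon$ indeed preserves $|\beta_\varepsilon\cdot n_x|$ under $v\mapsto R_xv$). The gap is in the ``absorption'' step, which is asserted but not justified and cannot work as stated. Set
$E_m:=\int_{\Sigma_+^{t^*}}|\beta_\varepsilon\cdot n_x|\,\gamma_+F^m\,\gamma_+\psi$ and $D:=\int_{\Sigma_-^{t^*}}|\beta_\varepsilon\cdot n_x|\,\gamma_-\psi\ge 0$. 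Your telescoping identity reads $\iint F^n(t^*)\psi_T + E_n = \iint f_0\psi(0) + E_{n-1}$, and the two $\psi$-identities give $\iint\psi(0)=\iint\psi_T - aD$ and $\int_{\Sigma_+}|\beta_\varepsilon\cdot n_x|\gamma_+\psi=(1-a)D$. Even if you \emph{grant yourself} both $\|\gamma_+F^{n-1}\|_\infty\le\|f_0\|_\infty$ and $\|\gamma_+F^n\|_\infty\le\|f_0\|_\infty$ (the latter is part of what you are trying to prove, so this is already circular), the best estimate is
\begin{equation*}
\iint F^n(t^*)\psi_T \;\le\; \|f_0\|_\infty\bigl(\iint\psi_T - aD\bigr) + \|f_0\|_\infty(1-a)D + \|f_0\|_\infty(1-a)D
\;=\; \|f_0\|_\infty\iint\psi_T + \|f_0\|_\infty(2-3a)D,
\end{equation*}
which exceeds $\|f_0\|_\infty\iint\psi_T$ for every $a<2/3$ — precisely the regime relevant for the later limit $a\to 0$. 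In short, $\gamma_+F^n$ is of a priori indeterminate sign and magnitude (the only unconditional trace bound is Lemma~\ref{Lem:trace-est}, which is $e^{4T/\varepsilon^2}\|f_0\|_\infty$ and blows up as $\varepsilon\to 0$), and the dissipation rate $a$ in the adjoint $L^1$ identity is too weak to absorb the outgoing-flux term. No amount of combining the two $\psi$-identities fixes this, so both the base case and the inductive step fail.

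Your route is also structurally different from the paper's. The paper does not attempt a direct induction on $n$: it assumes for contradiction that $F(t_*,\cdot,\cdot)>\|f_0\|_\infty+\kappa$ on a set $A$ of positive measure, picks a ball $B$ with $m(B\cap A)>(1-\delta)m(B)$, takes $\varphi$ supported in $\bar B$ with $\iint\varphi=1$, and from the duality identity $\iint F(t_*)\psi(t_*)=\iint f_0\psi(0)$ (equation~(\ref{weak-formulation-identity})) derives $\|f_0\|_\infty+\kappa - C\delta\le\|f_0\|_\infty$, a contradiction for small $\delta$. Note that the paper's identity~(\ref{weak-formulation-identity}) itself suppresses exactly the boundary flux $\int_{\Sigma^{t_*}}(\beta_\varepsilon\cdot n_x)\gamma F\,\psi$ that you correctly retain (cf.\ the remark following Definition~\ref{Def:weak-sol-app-pb}, where this term is observed to vanish only in the limit $n\to\infty$); so the paper's contradiction argument takes a different path than your induction but rests on that same cancellation, which it does not verify for finite $n$. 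If you wish to repair your proposal, you would need a genuinely new mechanism to control $E_n$ — e.g.\ a separate maximum-principle argument applied to the trace itself, or a Gr\"onwall-type bootstrap in which the bound grows with $n$ and is only uniform after the $n\to\infty$ limit — rather than ``absorbing'' it via the $\psi$-identities.
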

 
 \begin{proof}
 	We will only prove (by contradiction) that ``$F(t,x,v)  \leq  \|f_0\|_{L^\infty (\Omega \times \mathbb{R}^3)}$ up to a zero-measure set on  $\Omega \times \mathbb{R}^3$,  $\forall  t\in [0,T]$.'' The other side ``$-F(t,x,v)  \leq  \|f_0\|_{L^\infty (\Omega \times \mathbb{R}^3)}$  i.e.,$F(t,x,v)  \geq  -\|f_0\|_{L^\infty (\Omega \times \mathbb{R}^3)}$ up to a zero-measure set on  $\Omega \times \mathbb{R}^3$,  $\forall  t\in [0,T]$'' can be proved analogously.
 
  Suppose that there are  $\kappa >0$  and  $t_*\in (0,T]$  such that 
 		\begin{equation} \label{M.P.-contradiction-assumption}
 		F(t_*,x,v)  >  \|f_0\|_{L^\infty} + \kappa  
 		\end{equation}
 		on a set with positive measure, say, $A\subset \Omega\times\mathbb{R}^3$. Then for each given $\delta>0$  small, we can choose a ball $B\subset \Omega\times\mathbb{R}^3$  such that 
 		\begin{equation} \label{measure-lem}
 		m(B\cap A) > m(B)\cdot (1-\delta)  
 		\end{equation}
 		and $m(B)$ is independent of $\delta$ (cf. Lemma 8 of \cite{hwang2014fokker}).

 		  Let  $\varphi(x,v)\in C_c^\infty(\Omega\times\mathbb{R}^3)$  be a function satisfying  $\varphi\!\geq\! 0$,  $\|\varphi\|_{L^\infty} <~\!\infty$, and condition~(\ref{compatibility-condition}) such that
 		\begin{equation} \label{phi-assumption}
 		{\rm supp} \varphi \subset \bar{B}\  \text{and}   \iint_{\Omega\times\mathbb{R}^3}\varphi  dxdv = 1. 
 		\end{equation}
 		By Lemma~\ref{Lem:adj-pb}, there exists a smooth solution $\psi\in C^\infty\!\left((0,t_*)\times\Omega\times\mathbb{R}^3 \right)$ to the adjoint problem (\ref{adj-eq-Q-bar}) \-- (\ref{adj-dual-modified-specular-BC}) with the terminal condition $ \psi_{t_*}(x,v)\eqdef \psi(t_*,x,v)  = \varphi(x,v)$ such that $\psi \geq 0$  and 
 		\begin{equation} \label{psi-L^1}
 		\iint_{\Omega\times\mathbb{R}^3}\psi(0,x,v)  dxdv \leq \iint_{\Omega\times\mathbb{R}^3}\psi(t_*,x,v)  dxdv = 1. 
 		\end{equation}
 		Let $F$ be a weak solution to the approximate problem in the sense of Definition~\ref{Def:weak-sol-app-pb}, then from the weak formulation~(\ref{Def:weak-formulation-app-pb}) with test function $\psi$ chosen to be the solution of the adjoint problem (\ref{adj-eq-Q-bar}) \-- (\ref{adj-dual-modified-specular-BC}) (Definition~\ref{Def:adj-pb-app-pb}) obtained above. Then we have
 		\begin{equation} \label{weak-formulation-identity}
 		\iint_{\Omega\times\mathbb{R}^3} F(t,x,v)\psi(t,x,v)  dxdv = \iint_{\Omega\times\mathbb{R}^3} f_0(x,v)\psi(0,x,v)  dxdv, 
 		\end{equation}
 		for all $t\in [0,T]$.
 		  		  Now we estimate  $\iint_{\Omega\times\mathbb{R}^3} f_0(x,v)\psi(0,x,v)  dxdv$  and\\ $\iint_{\Omega\times\mathbb{R}^3} F(t_*,x,v)\psi(t_*,x,v)  dxdv$, respectively, and reach a contradiction. 
 	By (\ref{phi-assumption}), (\ref{psi-L^1}) and the non-negativity of $\psi$, we have
 		\begin{align} \label{MP-pf-ineq-1}
 		\begin{split}
 		& \iint_{\Omega\times\mathbb{R}^3} f_0(x,v)\psi(0,x,v)  dxdv \\ 
 		\leq &\;\; \|f_0\|_{L^\infty} \iint_{\Omega\times\mathbb{R}^3}\big|\psi(0,x,v)\big|  dxdv
 		\;\leq   \|f_0\|_{L^\infty} . 
 		\end{split}
 		\end{align}
 		Moreover, by the assumption~(\ref{M.P.-contradiction-assumption}) on $F$ at $t\!=\!t_*$, our choice of $B$ with (\ref{measure-lem}), and $\psi_{t_*}\!=\varphi$ satisfying (\ref{phi-assumption}),  we obtain
 		\begin{align} \label{MP-pf-ineq-2}
 		\begin{split}
 		& \iint_{\Omega\times\mathbb{R}^3} F(t_*,x,v)\psi(t_*,x,v)  dxdv \\ 
 		= &\;  \left( \iint_{B\cap A}+\iint_{B\backslash A} \right) F(t_*,x,v)\varphi(x,v)  dxdv \\[2pt] 
 		\geq &\;  \Big(\|f_0\|_{L^\infty}+\kappa\Big)\cdot\! \iint_{B\cap A}\varphi dxdv \\
 		& -  \|F\|_{L^\infty([0,T]\times\Omega\times\mathbb{R}^3)}\cdot\|\varphi\|_{L^\infty(\Omega\times\mathbb{R}^3)}\cdot m(B) \delta \\[3pt]
 		\geq &\;  \big(\|f_0\|_{L^\infty}+\kappa\big)\!\cdot\! \big(1 - \|\varphi\|_{L^\infty}\!\cdot m(B) \delta \big) \\
 		& -  \|F\|_{L^\infty}\!\cdot\!\|\varphi\|_{L^\infty}\!\cdot m(B) \delta \\[3pt]
 		= &\;  \|f_0\|_{L^\infty}+\kappa - C\delta , 
 		\end{split}
 		\end{align}
 		where  $C$  depends on  $\|f_0\|_{L^\infty (\Omega\times\mathbb{R}^3)}$, $\|F\|_{L^\infty ([0,T]\times\Omega\times\mathbb{R}^3)}$, $\|\varphi\|_{L^\infty (\Omega\times\mathbb{R}^3)}$, and $m(B)$. 
 	 		  On the other hand, combining (\ref{weak-formulation-identity}) with (\ref{MP-pf-ineq-1}) and (\ref{MP-pf-ineq-2}), we have
 		\begin{align} \label{MP-pf-ineq-3}
 		\begin{split}
 		\|f_0\|_{L^\infty}+\kappa - C\delta  
 		& \leq  \iint_{\Omega\times\mathbb{R}^3} F(t_*,x,v)\psi(t_*,x,v)  dxdv \\
 		& =  \iint_{\Omega\times\mathbb{R}^3} f_0(x,v)\psi(0,x,v)  dxdv \\[3pt]
 		& \leq  \|f_0\|_{L^\infty} . 
 		\end{split}
 		\end{align}
 		Thus if  $\delta$  is chosen sufficiently small in such a way that  $C\delta < \kappa/2$, we then get a contradiction. Therefore, the original claim holds.  
 \end{proof}
 
 As a direct consequence of the $L^\infty$ estimate (Lemma~\ref{MP-L^infty-app-sol}), we also deduce the uniqueness of the approximate solutions.
 
 \begin{corollary} [Uniqueness for weak solutions of approximate problem] \label{Cor:uniqueness} 
 	Let  $F_1\!\eqdef \!$\newline$  f_1^{\varepsilon,a}$, $F_2 \!\eqdef \! f_2^{\varepsilon,a}$ be two weak solutions of the approximate problem (\ref{app-eq}) \-- (\ref{modified-BC}) with the same initial and boundary conditions. Then $F_1 = F_2$  in  $L^\infty \!\left([0,T]  ; L^\infty (\Omega \times \mathbb{R}^3) \right)$. 
 \end{corollary}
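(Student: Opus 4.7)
The plan is to exploit the linearity of the approximate problem (\ref{app-eq})--(\ref{modified-BC}) and reduce uniqueness to the $L^\infty$ maximum principle already established in Lemma~\ref{MP-L^infty-app-sol}. Set $W \eqdef F_1 - F_2$. Both $F_1$ and $F_2$ lie in $C([0,T]; L^1(\Omega\times\mathbb{R}^3)) \cap L^\infty([0,T]; L^\infty(\Omega\times\mathbb{R}^3))$ and satisfy the weak formulation (\ref{Def:weak-formulation-app-pb}) of Definition~\ref{Def:weak-sol-app-pb} with the \emph{same} initial datum $f_0$ and the same inflow datum $(1-a)\mathcal{R}[\gamma_+ f^{\varepsilon,a,n-1}]$ on $\Sigma_-^{\,T}$.

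First I would verify that $W$ is itself a weak solution of the approximate problem in the sense of Definition~\ref{Def:weak-sol-app-pb}, but with \emph{zero} initial data and \emph{zero} inflow boundary data. Since every term on both sides of (\ref{Def:weak-formulation-app-pb}) is linear in the unknown $F$ (the transport coefficient $\beta_\varepsilon(v)+[v-\beta_\varepsilon(v)]\eta_\varepsilon(x)$, the drift $\mathbf{B}$, and the discretized diffusion $\bar{Q}^\varepsilon$ are all independent of the unknown), subtracting the weak formulations for $F_1$ and $F_2$ against an admissible test function $\psi$ yields the weak formulation for $W$ with $W(0,\cdot,\cdot)\equiv 0$, and the boundary contribution $\int_{\Sigma^t}(\beta_\varepsilon(v)\cdot n_x)\gamma W\,\psi\,dS_xdvd\tau$ collapses because the inflow traces of $F_1$ and $F_2$ coincide.

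Then I would apply Lemma~\ref{MP-L^infty-app-sol} directly to $W$. That lemma asserts
\begin{equation*}
\|W\|_{L^\infty([0,T];\,L^\infty(\Omega\times\mathbb{R}^3))} \;\leq\; \|W(0,\cdot,\cdot)\|_{L^\infty(\Omega\times\mathbb{R}^3)} \;=\; 0,
\end{equation*}
so $W\equiv 0$ up to a zero-measure set in $[0,T]\times\Omega\times\mathbb{R}^3$, which yields $F_1=F_2$ in $L^\infty([0,T]; L^\infty(\Omega\times\mathbb{R}^3))$.

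I do not anticipate a serious obstacle, since the heavy lifting is already done in Lemma~\ref{MP-L^infty-app-sol}. The only point that deserves a sentence of justification is the reduction to zero boundary data: the modified specular condition (\ref{modified-BC}) couples $\gamma_- f^{\varepsilon,a,n}$ only to the \emph{previous} iterate $f^{\varepsilon,a,n-1}$, which is given and identical for both $F_1$ and $F_2$, so $\gamma_- W=0$ genuinely holds and Lemma~\ref{MP-L^infty-app-sol} applies to $W$ verbatim.
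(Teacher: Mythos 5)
Your proposal is correct and follows essentially the same route as the paper: both exploit linearity to reduce to the case of zero initial (and boundary) data for the difference $F_1-F_2$, then invoke the $L^\infty$ estimate of Lemma~\ref{MP-L^infty-app-sol}. Your write-up merely spells out the boundary-data cancellation in slightly more detail than the paper's one-line argument.
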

 
 \begin{proof}
 	Since the equation (\ref{app-eq}) and boundary condition (\ref{modified-BC}) is linear, $F_0 \eqdef  F_1-F_2$  is a solution of   (\ref{app-eq}) with the initial condition  $F_0|_{t=0} = 0$  and the same boundary condition. Then applying the $L^\infty$ estimate (Lemma~\ref{MP-L^infty-app-sol}) to  $F_0$  yields  $\|F_1-F_2\|_{L^\infty} = 0$.
 \end{proof}

 \subsection{$L^1$ estimate}
 Next, we also present the $L^1$ estimate for the approximate solutions, as a dual result of Lemma~\ref{Lem:adj-pb}  as well.
 Let us remark that although it can be formally derived via integration by parts, here we are only allowed to work from the weak formulation because the regularity has yet to be shown.
 
 \begin{lemma} [$L^1$ estimate for weak solutions of approximate problem] \label{L^1-app-sol}
 	Let  $f_0\in$\newline$  L^1\cap L^\infty (\Omega \times\mathbb{R}^3)$ be given as an initial data, and  $F \!\eqdef \! f^{\varepsilon,a,n}$ a weak solution of the approximate problem \eqref{app-eq} \-- \eqref{modified-BC} (Definition~\ref{Def:weak-sol-app-pb}). Then its $L^1$ norm is non-increasing in time, i.e.,for each  $t\in [0,T]$,
 	\begin{equation*}
 	\|F(t)\|_{L^1 (\Omega \times \mathbb{R}^3)}  \leq  \|f_0\|_{L^1 (\Omega \times \mathbb{R}^3)} .
 	\end{equation*}
 \end{lemma}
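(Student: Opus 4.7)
The plan is to establish the $L^1$ bound by a duality argument with the adjoint problem, dual to the way the $L^\infty$ estimate in Lemma~\ref{MP-L^infty-app-sol} was obtained, but now exploiting the $L^1$-monotonicity of the adjoint solution in part (3) of Lemma~\ref{Lem:adj-pb} together with its $L^\infty$ maximum principle. For each fixed $t_* \in (0,T]$, I would represent
$$\|F(t_*)\|_{L^1(\Omega\times\mathbb{R}^3)} \;=\; \sup_\varphi \iint_{\Omega\times\mathbb{R}^3} F(t_*,x,v)\,\varphi(x,v)\,dx\,dv,$$
where $\varphi$ ranges over $\varphi \in C_c^\infty(\Omega\times\mathbb{R}^3)$ with $\|\varphi\|_\infty \leq 1$ satisfying the compatibility condition~\eqref{compatibility-condition}. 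A standard density argument shows this admissible class still recovers the full $L^1$ norm, since the region excluded by \eqref{compatibility-condition} has arbitrarily small Lebesgue measure as $\delta\to 0$.

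For each such $\varphi$, Lemma~\ref{Lem:adj-pb} produces a smooth adjoint solution $\psi$ on $[0,t_*]$ with terminal datum $\psi(t_*) = \varphi$ and $\|\psi\|_{L^\infty} \leq \|\varphi\|_\infty \leq 1$ by the max/min principle. Plugging $\psi$ into the weak formulation~\eqref{Def:weak-formulation-app-pb} and using $\bar{\mathcal{L}}^\varepsilon \psi = 0$ to eliminate the interior integrand yields
$$\iint F(t_*)\,\varphi\,dx\,dv \;=\; \iint f_0\, \psi(0)\,dx\,dv \;-\; \int_0^{t_*}\!\!\int_\gamma (\beta_\varepsilon\!\cdot\! n_x)\,\gamma F\, \psi\, dS_x\,dv\,d\tau,$$
and the first term on the right is immediately controlled by $\|f_0\|_{L^1}\|\psi(0)\|_\infty \leq \|f_0\|_{L^1}$. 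For the boundary contribution, I would substitute the modified specular boundary condition~\eqref{modified-BC} for $\gamma_- F^n$ and the dual condition~\eqref{adj-dual-modified-specular-BC} for $\gamma_\pm \psi$, then perform the change of variables $v\mapsto R_x v$ on the $\gamma_-$-piece (using that $\beta_\varepsilon$ commutes with $R_x$ since $\lambda_\varepsilon$ is even). The two $(1\!-\!a)$ factors from the primal and dual boundary conditions cancel, reducing the boundary term to
$$\int_0^{t_*}\!\!\int_{\gamma_+} (\beta_\varepsilon\!\cdot\! n_x)\,\bigl[\gamma_+ F^{n-1} - \gamma_+ F^n\bigr]\, \gamma_+\psi\, dS_x\,dv\,d\tau.$$

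To close the estimate I would proceed by induction on $n$. The base case $n = 1$ is clean since $\gamma_- F^1 \equiv 0$ collapses the boundary term to $-\int(\beta_\varepsilon\!\cdot\! n_x)\gamma_+ F^1\,\gamma_+\psi$; splitting $f_0 = f_0^+ - f_0^-$ and noting that the transport--jump operator preserves nonnegativity (by the maximum principle of Lemma~\ref{MP-L^infty-app-sol} applied to the positive/negative parts separately) lets one choose $\gamma_+\psi \geq 0$, so that the boundary contribution has the favorable sign. The inductive step is the main obstacle: to promote the bound from $n-1$ to $n$, the contractive factor $(1\!-\!a)<1$ in the modified BC must be exploited in tandem with the uniform trace estimates of Lemma~\ref{Lem:trace-est}, and the test function $\varphi$ must be chosen to approximate $\mathrm{sign}(F^n(t_*))$ so that the sign of $\gamma_+\psi$ inherited from the dual reflection condition forces the difference $\gamma_+ F^{n-1} - \gamma_+ F^n$ to contribute with the correct sign. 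Taking the supremum over admissible $\varphi$ then yields $\|F(t_*)\|_{L^1} \leq \|f_0\|_{L^1}$, uniformly in $n$.
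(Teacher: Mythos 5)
Your duality framework---the adjoint backward problem, the maximum principle of Lemma~\ref{Lem:adj-pb}, pairing $F(t_*)$ against $\varphi$ through the weak formulation~\eqref{Def:weak-formulation-app-pb}, and the final approximation $\varphi_k\to\mathrm{sgn}(F(t_*))\cdot\chi$ away from the singular set---is precisely the paper's. You are also right that substituting the adjoint solution into~\eqref{Def:weak-formulation-app-pb} kills only the interior integrand and leaves a residual boundary flux, and your algebra reducing it to $\int_{\Sigma_+^{t_*}}(\beta_\varepsilon\!\cdot\!n_x)\,\gamma_+(F^{n-1}\!-\!F^n)\,\gamma_+\psi$ is correct: after the change of variables $v\mapsto R_xv$, the factor $(1-a)$ coming from~\eqref{modified-BC} cancels against the $(1-a)^{-1}$ produced by~\eqref{adj-dual-modified-specular-BC}, using that $\lambda_\varepsilon$ is even so $\beta_\varepsilon(R_xv)\cdot n_x=-\beta_\varepsilon(v)\cdot n_x$. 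The paper at this point simply asserts identity~\eqref{weak-formulation-identity-t_*} with no boundary term, which, as the Remark following Definition~\ref{Def:weak-sol-app-pb} makes clear, is only literally true in the limit $n\to\infty$; so you have flagged something the paper treats implicitly.

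That said, the inductive step you yourself call ``the main obstacle'' is where the proposal genuinely breaks, so this does not prove the lemma. Nothing controls the sign of $\gamma_+F^{n-1}-\gamma_+F^n$: Lemma~\ref{Lem:trace-est} gives only $L^p$ size bounds, and its $L^1$ version degrades like $\tfrac{1-(1-a)^n}{a}$ as $a\to 0$, so the boundary remainder cannot simply be absorbed uniformly in $n$. Taking $\varphi\approx\mathrm{sgn}(F^n(t_*))$ does not produce the sign you want either: non-negativity in Lemma~\ref{Lem:adj-pb}(2) requires $\psi_T\geq 0$, which $\mathrm{sgn}(F^n(t_*))$ is not, and even when $\psi\geq 0$ the positivity of $\gamma_+\psi$ says nothing about the sign of the signed trace difference $\gamma_+F^{n-1}-\gamma_+F^n$. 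Your base case $n=1$ is plausible after splitting $f_0$ into positive and negative parts, but without a quantitative handle on the boundary difference the induction never closes. The route consistent with the paper's Remark is to treat the boundary flux as an error vanishing as $n\to\infty$ rather than to estimate it uniformly in $n$, which your plan does not do.
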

 
 \begin{proof}Let  $t_*\in [0,T]$ be given. Let $\varphi(x,v)\in C_c^\infty(\Omega\times\mathbb{R}^3)$ be a function satisfying  $\|\varphi\|_{L^\infty} \!\leq\!1$  and the compatibility condition~(\ref{compatibility-condition}).
 		By Lemma~\ref{Lem:adj-pb}, there exists a smooth solution $\psi\in C^\infty\!\left((0,t_*)\times\Omega\times\mathbb{R}^3 \right)$ to the adjoint problem (\ref{adj-eq-Q-bar}) \-- (\ref{adj-dual-modified-specular-BC}) with the terminal condition  $ \psi_{t_*}(x,v)\eqdef \psi(t_*,x,v)  = \varphi(x,v)$  such that 
 		\begin{equation} \label{psi-L^infty}
 		\|\psi\|_{L^\infty(Q_{t_*})}  \leq  \|\psi_{t_*}\|_{L^\infty(\Omega\times\mathbb{R}^3)} = \|\varphi\|_{L^\infty(\Omega\times\mathbb{R}^3)} \leq 1. 
 		\end{equation}
 		Since $F$ is a weak solution to the approximate problem in the sense of Definition~\ref{Def:weak-sol-app-pb}, from the weak formulation~(\ref{Def:weak-formulation-app-pb}) with test function $\psi$ replaced by the solution of the adjoint problem (\ref{adj-eq-Q-bar}) \-- (\ref{adj-dual-modified-specular-BC}) (Definition~\ref{Def:adj-pb-app-pb}) obtained above, we have
 		\begin{equation} \label{weak-formulation-identity-t_*}
 		\iint_{\Omega\times\mathbb{R}^3} F(t_*,x,v)\varphi(x,v)  dxdv = \iint_{\Omega\times\mathbb{R}^3} f_0(x,v)\psi(0,x,v)  dxdv. 
 		\end{equation}
 		It follows from (\ref{psi-L^infty}) and (\ref{weak-formulation-identity-t_*}) that
 		\begin{align} \label{L^1-pf-ineq-1}
 		\begin{split}
 		\iint_{\Omega\times\mathbb{R}^3} F(t_*,x,v)\varphi(x,v)  dxdv 
 		\leq &\;\; \|\psi\|_{L^\infty(Q_{t_*})} \iint_{\Omega\times\mathbb{R}^3}\big|f_0(x,v)\big|  dxdv \\
 		\leq &\;\; \|f_0\|_{L^1(\Omega\times\mathbb{R}^3)}. 
 		\end{split}
 		\end{align}
 		Since the terminal function $\varphi\in C_c^\infty(\Omega\times\mathbb{R}^3)$ with  $\|\varphi\|_{L^\infty} \!\leq\!1$  can be arbitrarily chosen (as long as it satisfies condition~(\ref{compatibility-condition})), we can take a sequence of such functions $\{\varphi_k\}$ such that $$\varphi_k(x,v)  \rightarrow   {\rm sgn}\big[F(t_*,x,v)\big]\!\cdot\chi_{\left\{(x,v)  :  x_{\!\perp}^{ 2}  +  \left|\beta_\varepsilon(v)\right|^2  \geq  \delta \right\}}$$ as  $k\!\rightarrow\!\infty$  (for some $\delta$ small). 
 		Then by the Lebesgue's dominated convergence theorem and (\ref{L^1-pf-ineq-1}),
 		\begin{align*} 
 		\begin{split}
 		& \iint_{\Omega\times\mathbb{R}^3} \big|F(t_*,x,v)\big|\cdot\chi_{\left\{(x,v)  :  x_{\!\perp}^{ 2}  +  \left|\beta_\varepsilon(v)\right|^2  \geq  \delta \right\}}  dxdv \\ 
 		= &\;\; \lim_{k\rightarrow\infty} \iint_{\Omega\times\mathbb{R}^3} F(t_*,x,v)\varphi_k(x,v)  dxdv 
 		\leq \;\; \|f_0\|_{L^1(\Omega\times\mathbb{R}^3)} . 
 		\end{split}
 		\end{align*}
 		Again, since the above inequality holds for any small $\delta\!>\!0$ and for any $t_*\in [0,T]$, we finally obtain  $\|F(t)\|_{L^1 (\Omega \times \mathbb{R}^3)}  \leq  \|f_0\|_{L^1 (\Omega \times \mathbb{R}^3)}$ for every $t\in [0,T]$.
 \end{proof}

 \section{Well-posedness for the linearized Landau Equation}\label{section: wellposedness lin}
 Finally, we give the proof of well-posedness for the original Landau initial-boundary value problem and will further discuss some additional results.

 \subsection{Proof of Theorem \ref{main-thm}: existence, uniqueness, and $L^1\cap L^\infty$ estimate}
 We are now ready to prove the main theorem:
 
 \begin{proof}[Proof of Theorem \ref{main-thm}]The proof consists of the following four steps.
 	
\noindent \textbf{Step 1: Passing to the limit:  $f^{\varepsilon,a,n}\overset{ w^* }{\rightharpoonup} f$.}
 	We first obtain the weak limit of the approximating sequence $\left\{f^{\varepsilon,a,n}\right\}$ as a candidate for a weak solution by the weak compactness (Banach-Alaoglu theorem), which is ensured by the \emph{uniform} estimates of the approximate solutions established in the previous section. From the uniform $L^\infty$ estimate (Lemma~\ref{MP-L^infty-app-sol}) and $L^1$ estimate (Lemma~\ref{L^1-app-sol}), and from taking the limit in the weak-* topology as  $\varepsilon,a \rightarrow 0$ and $n\rightarrow \infty$, we obtain that a sequence of  $\left\{f^{\varepsilon,a,n}\right\}$ converges weakly to $f$  in $L^\infty \!\left([0,T]  ; L^1\cap L^\infty (\Omega \times \mathbb{R}^3) \right)$. 
 	Again it follows from Lemma~\ref{MP-L^infty-app-sol} and Lemma~\ref{L^1-app-sol} that $f$ satisfies the $L^\infty$ bound
 	\begin{equation} \label{L^infty-bound}
 	\|f\|_{L^\infty \left([0,T]  ;  L^\infty (\Omega \times \mathbb{R}^3) \right)}  \leq  \|f_0\|_{L^\infty (\Omega \times \mathbb{R}^3)},
 	\end{equation}
 	and the $L^1$ bound
 	\begin{equation} \label{L^1-bound}
 	\|f\|_{L^\infty \left([0,T]  ;  L^1 (\Omega \times \mathbb{R}^3) \right)}  \leq  \|f_0\|_{L^1 (\Omega \times \mathbb{R}^3)} .
 	\end{equation}
 	Moreover, by the Lebesgue's dominated convergence theorem, we have for each $t\in [0,T]$,
 	\begin{equation} \label{main-thm-pf-1}
 	\iint_{\Omega\times\mathbb{R}^3} f^{\varepsilon,a,n}(t,x,v)\psi(t,x,v)  dxdv  \rightarrow  \iint_{\Omega\times\mathbb{R}^3} f(t,x,v)\psi(t,x,v)  dxdv 	 
 	\end{equation}
 	and 
 	\begin{multline} \label{main-thm-pf-2}
 	\int_0^t\!\!\iint_{\Omega\times\mathbb{R}^3} f^{\varepsilon,a,n}(\tau,x,v)\psi(\tau,x,v)  dxdvd\tau \\ \rightarrow  \int_0^t\!\!\iint_{\Omega\times\mathbb{R}^3} f(\tau,x,v)\psi(\tau,x,v)  dxdvd\tau 	 
 	\end{multline}
 	as  $\varepsilon,a \rightarrow 0$.
 	 
 \noindent \textbf{Step 2: Weak continuity of $f(t)$ : $t \mapsto\! \iint_{\Omega\times\mathbb{R}^3} f(t,x,v)\psi(t,x,v) dxdv$.  }
 	Let a test function $\psi(t,x,v)$ be given and $t_1, t_2 \in [0,T]$. 
 	Notice that for the sequence of approximate solutions $f^{\varepsilon,a,n}$, it holds that
 	\begin{align*} 
 	\begin{split}
 	& \iint_{\Omega\times\mathbb{R}^3} f^{\varepsilon,a,n}(t_1)\psi(t_1)  dxdv 
 	 - \iint_{\Omega\times\mathbb{R}^3} f^{\varepsilon,a,n}(t_2)\psi(t_2)  dxdv \\ 
 	= &\; \iint_{\Omega\times\mathbb{R}^3} f^{\varepsilon,a,n}(t_1)\big[\psi(t_1)-\psi(t_2)\big] dxdv 
 	 + \iint_{\Omega\times\mathbb{R}^3} \big[f^{\varepsilon,a,n}(t_1)-f^{\varepsilon,a,n}(t_2)\big]\psi(t_2)  dxdv . 
 	\end{split}
 	\end{align*}
 	Since $f^{\varepsilon,a,n} \in C \!\left([0,T]  ; L^1 (\Omega \times \mathbb{R}^3) \right)  \cap  L^\infty \!\left([0,T]  ; L^\infty (\Omega \times \mathbb{R}^3) \right)$ has the uniform $L^\infty$ estimate (Lemma~\ref{MP-L^infty-app-sol}) and $L^1$ estimate (Lemma~\ref{L^1-app-sol}), both terms on the right-hand side can be made small uniformly in $\varepsilon$ and $a$ if  $|t_1-t_2|$ is sufficiently small; i.e.,we have shown that $t \mapsto \iint_{\Omega\times\mathbb{R}^3} f^{\varepsilon,a,n}(t)\psi(t) dxdv$  is ``equi-continuous''.
 	Also, since $f^{\varepsilon,a,n}$ converges in weak-* topology to $f$  by taking  $\varepsilon,a \rightarrow 0$ and $n\rightarrow \infty$, we deduce the weak continuity of $f(t)$. In particular, $\iint_{\Omega\times\mathbb{R}^3} f(t)\psi(t) dxdv$  is well-defined for every $t\in [0,T]$.
 	 
 \noindent \textbf{Step 3: Weak formulation (\ref{Def:weak-formulation}).}
 	Note that for the test function $\psi$, as $\varepsilon \rightarrow 0$, we have 
 $$
 	 \nabla_{\!x}\cdot \Big(\big\{\beta_\varepsilon(v) + [v\!-\!\beta_\varepsilon(v)]  \eta_\varepsilon(x) \big\} \psi(\tau,x,v) \Big) 
 	 \rightarrow  \nabla_{\!x}\cdot \big(v \psi(\tau,x,v) \big)
 	= v\cdot\nabla_{\!x}\psi(\tau,x,v) ,
 	$$
 	 and
 	 $$
 	\bar{Q}^{\varepsilon}_{\!\Delta}[\psi](\tau,x,v) \eqdef  \frac{2}{\varepsilon^2} \int_{\mathbb{R}^3}\! \big[\psi(\tau,x,v\!-\!\varepsilon u)-\psi(\tau,x,v)\big]\xi(u) du
 	 \rightarrow  \Delta_v \psi(\tau,x,v) ,
 	 $$
 	in $L^1\!\left((0,t)\times\Omega\times\mathbb{R}^3 \right)$, for each $t\in (0,T]$.  In addition, as $a \rightarrow 0$, 
 	\begin{equation} \label{main-thm-pf-5}
 	\gamma_+ \psi = (1\!-\!a)  \mathcal{R^*}\!\left[\gamma_- \psi\right] 
 	 \rightarrow  \mathcal{R^*}\!\left[\gamma_- \psi\right] .
 	\end{equation}
 	Combined with (\ref{main-thm-pf-1}) and (\ref{main-thm-pf-2}), we see that the limit $f$ satisfies the weak formulation (\ref{Def:weak-formulation}) of  Definition~\ref{Def:weak-sol}  by taking  $\varepsilon,a \rightarrow 0$. 
 	 Therefore, summing up Step 1--3, we obtain that $f$  is indeed a weak solution of   (\ref{linearized-eq-homo}) with (\ref{initial}) and (\ref{specular}), this concludes the proof of the existence.
 	 
 	\noindent \textbf{Step 4: Uniqueness.} 
 	With the $L^1\cap L^\infty$ estimate (\ref{L^infty-bound}) and (\ref{L^1-bound}), we can easily get the uniqueness of the weak solution in a similar manner to the proof of Corollary~\ref{Cor:uniqueness}.  
 \end{proof}
 \subsection{Well-definedness of the trace}
 Additionally, a classical trace result by Ukai \cite{MR882376} also assures us that the trace functions  $\gamma_\pm f^{\varepsilon,a,n}$  are bounded in $L^\infty$ independently of $\varepsilon$, $n$ and 
 $a$ in light of the uniform $L^\infty$ bound for $f^{\varepsilon,a,n}$, so that they have enough regularity to pass to the limit. This allows us to take a subsequence such that $$\gamma_\pm f^{\varepsilon,a,n} \rightarrow  g_\pm  =: \gamma_\pm f$$ weakly-* in $L^\infty(\Sigma_\pm^{ T})$, in which sense the trace $\gamma_\pm f$  is well-defined.

 \subsection{Recovering the initial-boundary condition}
 Let us rewrite the linearized \textit{Landau equation} (\ref{linearized-eq-homo}) as  $\mathcal{L}_g f =0$, where  $\mathcal{L}_g$ denotes the linear Landau operator 
 $$\mathcal{L}_g f \eqdef   \partial_t f + v\cdot\nabla_{\!x} f - a_g\cdot\nabla_{\!v}f - \nabla_v\cdot\big(\sigma_{\!G}\nabla_{\!v} f\big) .$$
 Suppose that the following Green's identity is valid for the solution $f$ and any $\psi$ as the test function in Definition~\ref{Def:weak-sol} with  $\gamma_+ \psi = \mathcal{R^*}\!\left[\gamma_- \psi\right]$  on $\Sigma_+^{ T}$: 
 \begin{align*}
 & \langle \mathcal{L}_g f, \psi \rangle_{Q_t}  +  \langle \mathcal{L}_g^{ *} \psi, f \rangle_{Q_t} \\[2pt]
 =\; & \int_{\Omega\times\mathbb{R}^3}\! \big[(f\psi)(t,x,v) - (f\psi)(0,x,v)\big]dxdv  + 
 \int_{\Sigma^{t}}\!\gamma f \gamma\psi  (v\!\cdot\! n_x)  dS_xdvd\tau ,
 \end{align*}
 where $\langle \cdot, \cdot \rangle_{Q_t}$ stands for the natural duality pairing defined as the integration of the product over  $Q_t \!\eqdef  (0,t)\times\Omega\times\mathbb{R}^3$. The adjoint operator reads 
 $$\mathcal{L}_g^{ *} \psi  \eqdef   -\partial_t \psi - v\cdot\nabla_{\!x} \psi + \nabla_v\cdot\big(a_g \psi\big) - \nabla_v\cdot\big(\sigma_{\!G}\nabla_{\!v} \psi\big) ,$$
 and the boundary term is written out as 
 \begin{align*}
 & \int_{\Sigma^{t}}\!\gamma f \gamma\psi  (v\!\cdot\! n_x)  dS_xdvd\tau \\
 =\; & \int_{\Sigma_+^{ t}}\!\gamma_{\!+} f  \gamma_{\!+}\psi  |v\!\cdot\! n_x|  dS_xdvd\tau
  - \int_{\Sigma_-^{ t}}\!\gamma_{\!-} f  \gamma_{\!-}\psi  |v\!\cdot\! n_x|  dS_xdvd\tau \\
 =\; & \int_{\Sigma_+^{ t}}\!\gamma_{\!+} f  \mathcal{R^*}\!\left[\gamma_{\!-}\psi\right] |v\!\cdot\! n_x|  dS_xdvd\tau
  - \int_{\Sigma_-^{ t}}\!\gamma_{\!-} f  \gamma_{\!-}\psi  |v\!\cdot\! n_x|  dS_xdvd\tau \\
 =\; & \int_{\Sigma_-^{ t}}\! \big[ \mathcal{R}\!\left[\gamma_{\!+}f\right] - \gamma_{\!-} f  \big] \gamma_{\!-}\psi  |v\!\cdot\! n_x|  dS_xdvd\tau .
 \end{align*}
 Taking into account that $f$ is a weak solution to the problem (\ref{linearized-eq-homo}), (\ref{initial}), and (\ref{specular}) in the sense of distributions and satisfies the weak formulation (\ref{Def:weak-formulation}), it is straightforward to deduce that
 \begin{equation*}
 \int_{\Omega\times\mathbb{R}^3}\! \big[f_0 - f(0)\big]\psi(0)  dxdv   +
 \int_{\Sigma_-^{ t}}\! \big[ \mathcal{R}\!\left[\gamma_{\!+}f\right]  -  \gamma_{\!-} f  \big] \gamma_{\!-}\psi  |v\cdot n_x|  dS_xdvd\tau  = 0 ,
 \end{equation*}
 for any $\psi$  as in Definition~\ref{Def:weak-sol}. Therefore, $f$ verifies the initial condition  $f(0,x,v) = f_0(x,v)$  and the specular reflection boundary condition  $\gamma_- f = \mathcal{R}\left[\gamma_+ f\right]$  as desired. 
 
 The above result implies that, for any test function of  Definition~\ref{Def:weak-sol} without the dual boundary condition (\ref{dual-specular-BC}), the weak formulation (\ref{Def:weak-formulation}) can be replaced by
 \begin{align*} \label{Def:weak-formulation-alt}
 \begin{split}
 & \iint_{\Omega\times\mathbb{R}^3} \big[f(t)\psi(t) - f_0 \psi(0)\big] dxdv  
  + \int_{\Sigma^{t}}\!\gamma f \gamma\psi  (v\!\cdot\! n_x)  dS_xdvd\tau \\[3pt]
 & = \int_0^t\! \iint_{\Omega\times\mathbb{R}^3} f(\tau,x,v) \Big[ \partial_t \psi + v\cdot\nabla_{\!x}\psi - \nabla_v\cdot\big(a_g \psi \big) + \nabla_v\cdot\big(\sigma_{\!G}\nabla_{\!v}\psi \big) \Big] dxdvd\tau 
 \end{split}
 \end{align*}
 for all  $t\in [0,T]$, where  $\gamma_\pm f$  satisfies the boundary condition (\ref{specular}).

 \section{$L^2$ decay estimate}
 \label{sec: L2decay}
 Thanks to the work in the previous section, we are now equipped with the wellposedness of \eqref{eq Agf only} with the \textit{specular reflection boundary condition} in the sense of distribution. So we may associate a continuous semigroup of linear and bounded operators $U(t)$ such that $f(t) = U(t)f_0$ is the unique weak solution of \eqref{eq Agf only}.
 Then by the Duhamel principle, the solution $\bar{f}$ of the whole linearized equation \eqref{linearized eq} can further be written as$$
 \bar{f}(t) = U(t)\bar{f}_{0} + \int_{0}^{t} U(t-s) \bar K_{g} \bar{f}(s) ds.$$
 
 The main question in this section is whether we can further obtain the $L^2$ decay estimates for the solution $\bar{f}$. For the notational simplicity we use $f$ to denote $\bar{f}$, solutions to the whole linearized equation \eqref{linearized eq}, throughout this section. 
 
 For the $L^2$ decay theory, we will work directly on the bounded domain $\Omega$ and obtain the estimates by following a constructive method in the same manner as Proposition 4.1 of \cite{MR3712934}, that is, by choosing test functions suitably, the $L^2$ norm of macro-components can be controlled by the micro-components. Particularly, to handle the boundary terms, we choose the Burnette functions as orthogonal bases for the micro-components, so that the boundary integral can be reformulated in a more delicate way.

 \begin{remark}
 The proof is actually a variation of the one in Section 4 of \cite{2016arXiv161005346K} modified for specular reflection boundary case. In particular, we need to instead consider the existence of solutions to the elliptic problem with certain boundary conditions, under additional assumption of conservation law of angular momentum \eqref{angcon}, in case the domain $\Omega$ has any rotational symmetry.
 Alternatively, we might also adapt the semigroup-compactness method/contradiction argument in \cite{MR2679358}, although the former method is more preferred.
\end{remark}
Throughout this section, we consider the following linearized \textit{Landau equation}
 \begin{equation}\label{linear}
 \partial_tf+v\cdot \nabla_x f+Lf=\Gamma(g,f).
 \end{equation}
 The initial-boundary condition of $f$ is given by
 \begin{equation}\label{ib}
 \begin{cases}
 f(0,x,v)=f_0(x,v), \text{ if }x\in\Omega \text{ and }v\in\rth,\\
 f(t,x,v)=f(t,x,v-2(v\cdot n_x)n_x),\text{ if }x\in\partial\Omega\text{ and }v\cdot n_x<0.
 \end{cases}
 \end{equation}
 We note that the linear Landau operator $L$, given by \eqref{L}, is a self-adjoint nonnegative operator in $L^2$. The null space $N$ of $L$ is spanned by $$ \mu^{1/2},\  v^j \mu^{1/2}\ (j=1,2,3),\ \frac{|v|^2}{2} \mu^{1/2},$$ which are known as the collision invariants. We normalize these invariants and define \begin{equation}\label{generators}
 \begin{split}
 \chi_0&= \mu^{1/2},\\
 \chi_j&=v^j \mu^{1/2},\ j=1,2,3\\
 \chi_4&=\frac{|v|^2-3}{\sqrt{6}} \mu^{1/2}.
 \end{split}
 \end{equation}
 Then we define the projection to the null space $N$ by $P$ as follows:
 \begin{equation}
 Pf=\sum_{k=0}^{4}\langle f,\chi_k\rangle \chi_k,
 \end{equation} where $\langle g,h\rangle \eqdef  \int_\rth ghdv.$
 We will also use the following Burnette functions of the space $N^{\perp}:$
 \begin{equation}\label{Aj}A_j(v)=v^j\frac{|v|^2-5}{\sqrt{10}}\sqrt{\mu},\end{equation}
 and
 \begin{equation}\label{Bij}
 B_{kl}(v)=\left(v^kv^l-\frac{\delta_{kl}}{3}|v|^2\right)\sqrt{\mu},\end{equation}
 where $k,j=1,2,3,$ and $\delta_{kl}=1$ if $ k=l$ and $=0$ otherwise.
 
 \subsection{Technical lemmas}
 For the nonlinear collision operator $\Gamma(f,h)$, we have the following known estimates.
 \begin{lemma}[Theorem 2.8 of \cite{2016arXiv161005346K}]
 	\label{2.8}
 	Let $\Gamma$ be defined as in \eqref{Gamma}.
 	For every $\vartheta\in\mathbb{R}$, there exists $C_{\vartheta}$ such
 	that
 	\begin{equation}
 	\label{Gammav}
 	|\langle w^{2 \vartheta} \Gamma[g_{1},g_{2}], g_{3} \rangle|\le C_{\vartheta}|g_{1}|_{\infty}|g_{2}|_{\sigma, \vartheta}
 	|g_{3}|_{\sigma, \vartheta},
 	\end{equation}
 	and
 	\begin{equation}
 	\label{Gammax}%
 	\left|  \left(  w^{2 \vartheta} \Gamma[g_{1},g_{2}], g_{3} \right)  \right| \le C_{\vartheta}\|g_{1}\|_{\infty}\|g_{2}\|_{\sigma, \vartheta}\|g_{3}\|_{\sigma, \vartheta}.
 	\end{equation}
 \end{lemma}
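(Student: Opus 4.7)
The plan is to combine three ingredients: an integration by parts that exploits the quasi-divergence form of $\Gamma$, pointwise bounds on the convolutions $\phi^{ij}\!\ast[\mu^{1/2}g_1]$ and its variants, and a Cauchy-Schwarz estimate in the anisotropic $\sigma$-inner product. I will prove \eqref{Gammav} first and recover \eqref{Gammax} at the end by an $x$-integration.

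For \eqref{Gammav}, write $\Gamma[g_1,g_2]$ as in \eqref{Gamma} and pair it with $w^{2\vartheta}g_3$ in $v$. On the two divergence-form summands I integrate by parts in $v$, transferring $\partial_i$ onto $w^{2\vartheta}g_3$. Since $\partial_i w^{2\vartheta}=2\vartheta v_i w^{2\vartheta-2}$, this produces both a factor $w^{2\vartheta}\partial_i g_3$ and a factor $w^{2\vartheta-1}v_i g_3$, which are precisely the two pieces weighted inside $|g_3|_{\sigma,\vartheta}$. Denoting $\sigma^{ij}_{\mu^{1/2}g_1}(v)\eqdef(\phi^{ij}\!\ast[\mu^{1/2}g_1])(v)$ and analogously for the variants, the pairing $\langle w^{2\vartheta}\Gamma[g_1,g_2],g_3\rangle$ reduces, after combining like terms, to integrals of the schematic form
$$\int w^{2\vartheta}\,\sigma^{ij}_{\mu^{1/2}g_1}(v)\,(\partial_j g_2 + v_j g_2)\,(\partial_i g_3 + v_i g_3)\,dv,$$
plus lower-order pieces in which the $v$-derivative appearing on $g_1$ in the last two terms of \eqref{Gamma} has been removed by an additional integration by parts in $v'$ against $\mu^{1/2}$.

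The heart of the argument is the pointwise bound
$$\left|\sigma^{ij}_{\mu^{1/2}g_1}(v)\,\xi_i\xi_j\right|\lesssim |g_1|_\infty\,\sigma^{ij}(v)\,\xi_i\xi_j\qquad \forall\,\xi\in\mathbb{R}^3,$$
and the analogous scalar-valued bound for the variant with $v_i\mu^{1/2}g_1$ in the convolution. This follows from the trivial majorization $\mu^{1/2}|g_1|\le|g_1|_\infty\,\mu^{1/2}$ together with the fact that the projection structure of $\phi(v\!-\!v')$ yields the same anisotropic decay when convolved against $\mu^{1/2}$ as against $\mu$ itself, up to a multiplicative constant. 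Inserting this bound and applying the matrix Cauchy-Schwarz inequality for the positive semi-definite matrix $\sigma^{ij}$,
$$\left|\int w^{2\vartheta}\sigma^{ij}\Xi_i\Psi_j\,dv\right|\le\left(\int w^{2\vartheta}\sigma^{ij}\Xi_i\Xi_j\,dv\right)^{\!1/2}\!\left(\int w^{2\vartheta}\sigma^{ij}\Psi_i\Psi_j\,dv\right)^{\!1/2},$$
delivers exactly $|g_1|_\infty |g_2|_{\sigma,\vartheta} |g_3|_{\sigma,\vartheta}$, proving \eqref{Gammav}. The spatial version \eqref{Gammax} then follows by integrating \eqref{Gammav} over $\Omega$, pulling out $\|g_1\|_\infty=\sup_{x,v}w^{0}|g_1|$, and applying Cauchy-Schwarz in $x$ between $\|g_2\|_{\sigma,\vartheta}$ and $\|g_3\|_{\sigma,\vartheta}$.

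The main technical obstacle I anticipate is ensuring that the pointwise bound on $\sigma^{ij}_{\mu^{1/2}g_1}$ matches the \emph{anisotropic} structure of $\sigma^{ij}$ exactly, rather than some cruder isotropic bound of the form $(1+|v|)^{-1}\mathbf{I}$; since $\phi(z)=|z|^{-1}(I-\hat z\otimes\hat z)$ is degenerate along $z$, the degeneracy in $v$ of $\sigma^{ij}(v)$ is sensitive to the precise tail behavior of $\mu^{1/2}$, and only after carefully tracking this cancellation does one recover the $\sigma^{ij}$-form. This is what allows the resulting bilinear form to be controlled cleanly by the $\sigma$-norm rather than by a stronger weighted $H^1_v$-type norm. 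The lower-order pieces coming from $w^{2\vartheta-1}$ and from moving $\partial_j$ off $g_1$ are handled by the same anisotropic estimate with $v_j\mu^{1/2}$ in place of $\mu^{1/2}$; the extra polynomial factor is absorbed by the rapid decay of $\mu^{1/2}$.
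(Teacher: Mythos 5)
Your derivation of \eqref{Gammax} from \eqref{Gammav} --- integrating the velocity estimate over $\Omega$, pulling out $\|g_1\|_\infty$, and applying Cauchy--Schwarz in $x$ --- is exactly what the paper does. For \eqref{Gammav} itself the paper does not give an argument but simply invokes the proof of the corresponding estimate in \cite{2016arXiv161005346K}; your sketch (integration by parts in $v$, the pointwise quadratic-form domination $|\sigma^{ij}_{\mu^{1/2}g_1}\xi_i\xi_j|\lesssim|g_1|_\infty\,\sigma^{ij}\xi_i\xi_j$ via positivity of $\phi$ and comparability of $\sigma_{\mu^{1/2}}$ with $\sigma_\mu$, followed by anisotropic Cauchy--Schwarz in the $\sigma$-inner product) is the standard argument that citation refers to, so your route agrees with the paper's.
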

 \begin{proof}
 	The proof for \eqref{Gammav} is the same as the one for (2.11) in Theorem 2.8 of \cite{2016arXiv161005346K}. Then we use \eqref{Gammav} and H\"older's inequality and obtain
 	\begin{equation*}
 	\begin{split}
 	\left|  \left(  w^{2 \vartheta} \Gamma[g_{1},g_{2}], g_{3} \right)  \right|& = \int_\Omega|\langle w^{2 \vartheta} \Gamma[g_{1},g_{2}], g_{3} \rangle|dx\\
 	&  \le\int_\Omega C_{\vartheta}|g_{1}|_{\infty}|g_{2}|_{\sigma, \vartheta}|g_{3}|_{\sigma, \vartheta} dx\\
 	& \le C_{\vartheta}\|g_{1}\|_{\infty}\|g_{2}\|_{\sigma, \vartheta}\|g_{3}\|_{\sigma, \vartheta}.
 	\end{split}
 	\end{equation*}
 	Thus we have \eqref{Gammax}.
 \end{proof}
 
 Also, we have the coercive estimate on $L$:
 
 \begin{lemma}
 	[Lemma 5 in \cite{guo2002landau}]\label{Lemma : Guo2002 lemma5} 
 	Let $L$ be defined as in \eqref{L}. Then there is $\delta>0$, such that
 	\[
 	\langle Lg,g \rangle_v\ge\delta|(I-P) g|_{\sigma}^{2}.
 	\]
 	
 \end{lemma}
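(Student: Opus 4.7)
The plan is to exploit the standard splitting $L = -A - K$ together with a compactness/contradiction argument in the spirit of Guo's original proof. First, since $LPg = 0$ by definition of the null space $N$ and $L$ is self-adjoint, we have $\langle Lg,g\rangle_v = \langle L(I-P)g,(I-P)g\rangle_v$. It therefore suffices to establish $\langle Lh,h\rangle_v \geq \delta |h|_{\sigma}^2$ for $h \in N^\perp$, where the weighted norm $|h|_\sigma$ already reflects the anisotropic dissipation captured by the matrix $\sigma^{ij}$.

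Next, I would analyze the two pieces of $L$ separately. Rewriting $-Af = \mu^{-1/2}\partial_i\{\mu^{1/2}\sigma^{ij}[\partial_j f + v_j f]\}$ in its self-adjoint form and integrating by parts, one sees that $\langle -Ah,h\rangle_v$ produces the leading dissipation $|h|_\sigma^2$ up to weighted zeroth-order remainders involving $\partial_i\sigma^i$ and $\sigma^{ij}v_iv_j$; after cleaning these up using the pointwise bounds on $\sigma^{ij}$ (namely $\sigma^{ij}$ behaves like $(1+|v|)^{-3}$ along $v$ and like $(1+|v|)^{-1}$ transverse to $v$, cf.\ Lemma 2.4 of \cite{2016arXiv161005346K}), one obtains $\langle -Ah,h\rangle_v \geq c_0 |h|_\sigma^2 - C\,|h\,\chi_{|v|\leq R}|_2^2$ for every $R>0$ via a Young-type split. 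For the remainder $-K$, whose kernel comes from the convolution with $\phi^{ij}$ paired against exponentially decaying factors of $\mu^{1/2}$, standard Hilbert--Schmidt--type estimates show that $K$ is compact from $L^2_\sigma$ into itself, so that $|\langle Kh,h\rangle_v| \leq \varepsilon |h|_\sigma^2 + C_\varepsilon |h\,\chi_{|v|\leq R}|_2^2$ for arbitrarily small $\varepsilon>0$.

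Combining the two bounds yields $\langle Lh,h\rangle_v \geq (c_0-\varepsilon)|h|_\sigma^2 - C_{\varepsilon,R}\,|h\,\chi_{|v|\leq R}|_2^2$ on $N^\perp$, and the coercive estimate follows by a standard contradiction argument: if it failed, one would find a sequence $\{h_n\}\subset N^\perp$ with $|h_n|_\sigma = 1$ and $\langle Lh_n,h_n\rangle_v \to 0$. The inequality above then forces $|h_n\,\chi_{|v|\leq R}|_2$ to stay bounded away from $0$, so along a subsequence $h_n$ converges weakly in $L^2_\sigma$ and strongly in $L^2_{\mathrm{loc}}(\mathbb{R}^3)$ to some $h_\infty \neq 0$. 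By weak lower semicontinuity, $\langle Lh_\infty,h_\infty\rangle_v = 0$, and since $L\geq 0$ this forces $h_\infty \in N$; but $N^\perp$ is weakly closed, so $h_\infty\in N\cap N^\perp = \{0\}$, a contradiction.

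The principal obstacle is the lack of uniform ellipticity of $\sigma^{ij}$: because $\sigma^{ij}(v)$ degenerates like $(1+|v|)^{-3}$ along the $v$-direction at large velocities, the norm $|\cdot|_\sigma$ is strictly weaker than any standard $H^1_v$-norm, so extra care is required (i) to verify that the compactness of $K$ is compatible with this anisotropic weight, and (ii) to perform the local/global split so that the noncompact velocity tails are absorbed by $c_0|h|_\sigma^2$ rather than surviving as genuine error. The algebraic identity $\phi^{ij}(z)z_iz_j=0$ and the pointwise bounds on $\sigma^{ij}$ are what make both of these work.
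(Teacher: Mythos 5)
The paper does not prove this statement; it is cited verbatim from \cite{guo2002landau} (Lemma 5), and your outline faithfully reconstructs Guo's compactness--uniqueness argument: reduce to $h\in N^{\perp}$ via $LP=0$ and self-adjointness, show $\langle -Ah,h\rangle_v\ge c_0|h|_\sigma^2-C|h\chi_{|v|\le R}|_2^2$ for $R$ large, absorb $K$ via a small-plus-local quadratic-form bound, and close by contradiction with a normalized sequence and local strong $L^2$ compactness. Two minor imprecisions that do not affect the conclusion but should be cleaned up: \emph{(a)} asserting that $K$ is compact as an operator $L^2_\sigma\to L^2_\sigma$ overstates what is proved or required --- the relevant statement (Guo's Lemma 6) is the quadratic-form estimate $|\langle Kh,h\rangle_v|\le \varepsilon|h|_\sigma^2+C_{\varepsilon,R}|h\chi_{|v|\le R}|_2^2$, which is precisely what you then use; and \emph{(b)} because $\sigma^{ij}v_iv_j\sim(1+|v|)^{-1}$ at infinity, the norm $|\cdot|_\sigma$ does \emph{not} control $|\cdot|_{L^2_v}$, so ``$N^\perp$ is weakly closed'' in the ambient $L^2_\sigma$-topology is not directly available; instead, the exponential decay of the collision invariants $\chi_j$ combined with the uniform bound $|h_n|_\sigma=1$ gives tightness of the moments $\langle h_n,\chi_j\rangle$, which then pass to the limit and show $h_\infty\perp N$.
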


\subsection{Macro-micro decomposition}
 Then we can obtain the $L^2$ decay estimates for the solutions $f$ to \eqref{linear}:
 \begin{theorem}\label{linear l2}
 	Let $f$ be the weak solution of \eqref{linear} with initial-boundary value conditions \eqref{ib}, which satisfies the conservation laws \eqref{conservation laws}, and \eqref{angcon} if $\Omega$ has a rotational symmetry. 
 	Suppose that $\|g\|_{\infty} < \epsilon$ for some $\epsilon>0$.
 	For any $\vartheta\in 2^{-1}\mathbb{N} \cup\{0\}$, there exist $C$ and $\epsilon=\epsilon(\vartheta)>0$ such that 
 	\begin{equation}	\label{Eq : energy estimate linear}
 	\sup_{0 \le s< \infty}\mathcal{E}_{\vartheta}(f(s)) \le C 2^{2\vartheta} \mathcal{E}_{\vartheta}(0),
 	\end{equation}
 	and
 	\begin{equation}	\label{Eq : decay estimate linear}
 	\|f(t)\|_{2,\vartheta} \le C_{\vartheta,k} \left(\mathcal{E}_{\vartheta+\frac{k}{2}}(0) \right)^{1/2}\left(  1+ \frac{t}{k}\right)^{-k/2}%
 	\end{equation}
 	for any $t>0$ and $k\in\mathbb{N}$, where $\mathcal{E}_\vartheta(f(t))$ is defined in \eqref{E}.
 	
 \end{theorem}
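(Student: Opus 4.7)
The plan is to follow a macro-micro energy approach: decompose $f = Pf + (I-P)f$ where $P$ is the $L^2_v$ projection onto the null space of $L$ spanned by the collision invariants $\chi_0,\ldots,\chi_4$ in \eqref{generators}. Lemma \ref{Lemma : Guo2002 lemma5} only gives coercivity against the \emph{micro} part $(I-P)f$, so the heart of the argument lies in recovering control of the macro coefficients $a(t,x)\eqdef \langle f,\chi_0\rangle$, $b_j(t,x)\eqdef \langle f,\chi_j\rangle$, $c(t,x)\eqdef \langle f,\chi_4\rangle$ by testing \eqref{linear} against carefully designed auxiliary functions that respect the specular reflection boundary condition \eqref{ib}.

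First I would multiply \eqref{linear} by $w^{2\vartheta}f$ and integrate over $\Omega\times\mathbb{R}^3$. The transport contribution $v\!\cdot\!\nabla_x f$ produces a boundary term $\tfrac{1}{2}\!\int_\gamma (v\!\cdot\! n_x) w^{2\vartheta}|f|^2 dS_xdv$ that vanishes under specular reflection by the change of variables $v\mapsto R_xv$ on $\gamma_-$ (since $w$ and $|f|^2$ are preserved while $v\!\cdot\! n_x$ flips sign). The term $\langle Lf,f\rangle$ yields $\delta\|(I-P)f\|_{\sigma,\vartheta}^2$ modulo weight-commutator terms that can be absorbed at lower weight, and $\Gamma(g,f)$ is estimated by Lemma \ref{2.8} as $C\|g\|_\infty\|f\|_{\sigma,\vartheta}^2$, which is small because $\|g\|_\infty<\epsilon$. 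This differential inequality, after iterating across weight levels, gives the energy bound \eqref{Eq : energy estimate linear}; the factor $2^{2\vartheta}$ arises naturally from tracking the accumulated lower-order constants from the weight commutators $[\partial_v,w^\vartheta]$.

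The main obstacle, and the step that requires the specular boundary machinery, is establishing a coercivity lower bound of the form $\|Pf\|_2^2 \lesssim \|(I-P)f\|_\sigma^2 + \tfrac{d}{dt}\mathcal{I}(t)$ for a bounded interaction functional $\mathcal{I}$. Following Section 4 of \cite{2016arXiv161005346K} and Proposition 4.1 of \cite{MR3712934}, I would construct test functions of the schematic form $\psi_{a}= v\!\cdot\!\nabla_x\Phi_a(x)\sqrt{\mu}$, $\psi_{b}= A_j(v)\partial_k\Phi_b^{jk}(x)$, and $\psi_{c}= B_{kl}(v)\partial_l\Phi_c^k(x)$, where $A_j, B_{kl}$ are the Burnette functions \eqref{Aj}--\eqref{Bij} and the potentials $\Phi_\bullet$ solve elliptic boundary value problems in $\Omega$ whose right-hand sides are $a,b,c$ themselves. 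The boundary conditions on the $\Phi_\bullet$ must be prescribed so that (i) the associated $\psi_\bullet$ possess the \emph{even} parity with respect to $v\mapsto R_xv$ that makes them admissible test functions against solutions satisfying specular reflection, forcing the boundary integrals $\int_\gamma(v\!\cdot\! n_x)\gamma f\,\gamma\psi_\bullet\, dS_xdv$ to vanish, and (ii) the solvability conditions are satisfied by invoking \eqref{conservation laws}, supplemented by \eqref{angcon} when $\Omega$ is rotationally symmetric, which kills the extra rigid-rotation kernel element that would otherwise obstruct the elliptic problem. The odd/even structure of the Burnette functions under the reflection map $R_x$ is precisely what makes this compatibility possible and is the reason they are the natural choice here over other bases of $N^\perp$.

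Once the coercivity estimate is proved and combined with the weighted energy inequality, one obtains $\tfrac{d}{dt}\|f\|_{2,\vartheta}^2 + \lambda\|f\|_{2,\vartheta}^2 \le C\|f\|_{2,\vartheta-1/2}^2$, giving in particular exponential decay at the highest weight. To upgrade the decay rate $(1+t/k)^{-k/2}$ in \eqref{Eq : decay estimate linear}, I would interpolate the weighted $L^2$ norms: writing $|f|_{2,\vartheta}^{2+2/k} \le |f|_{2,\vartheta+k/2}^{2}\,|f|_{2,\vartheta}^{2/k}$ by H\"older in $v$ with the weight $w$, one converts the differential inequality $\tfrac{d}{dt}\|f\|_{2,\vartheta}^2 + c\|f\|_{2,\vartheta}^{2+2/k}\mathcal{E}_{\vartheta+k/2}(0)^{-2/k}\le 0$ and integrates explicitly to get polynomial decay of order $k/2$ at weight $\vartheta$, completing the proof.
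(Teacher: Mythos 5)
Your proposal follows essentially the same macro-micro energy strategy as the paper: a weighted $L^2$ energy identity whose boundary terms vanish under specular reflection, coercivity on the micro part from Lemma \ref{Lemma : Guo2002 lemma5}, control of the macro coefficients $a,b,c$ via Burnette-function test functions built from elliptic potentials whose boundary conditions are designed to kill the boundary integrals (with \eqref{conservation laws} and, when $\Omega$ is rotationally symmetric, \eqref{angcon} guaranteeing solvability), and a weight-interpolation argument for the polynomial decay---this is precisely the content of Proposition \ref{Macromicro}, Corollary \ref{Coro : coercivity}, and the weighted energy argument imported from \cite{2016arXiv161005346K}. The only cosmetic slips are that your pairing of Burnette functions with the macro coefficients is permuted relative to the paper (the paper uses $A_j$ in $\psi_c$ and a combination of $B_{ij}$ with $\chi_4$ in $\psi_b$) and your stated interpolation inequality should read $|f|_{2,\vartheta}^{2+2/k}\le |f|_{2,\vartheta+k/2}^{2/k}\,|f|_{2,\vartheta-1/2}^{2}$; neither affects the argument.
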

 
 In order to prove this theorem, the key estimate that we need to establish is the following proposition. Here we estimate $Pf$ in terms of $(I-P)f$:
 \begin{proposition}\label{Macromicro}Assume $\|g\|_{\infty} < \epsilon$ for some $\epsilon>0$.
 	Let $f$ be a weak solution of \eqref{linear} and \eqref{initial} with \eqref{conservation laws}, and \eqref{angcon} if $\Omega$ has a rotational symmetry. Then there exist $C$ and a function $|\eta(t)| \le C\|f(t)\|_{2}^{2}$, such
 	that 
 	\[
 	\int_{t_0}^{t} \|P f(\tau)\|_{\sigma}^{2}ds \le\eta(t)-\eta(t_0) + C \int
 	_{t_0}^{t} \|(I-P)f(\tau)\|_{\sigma}^{2}.
 	\]
 \end{proposition}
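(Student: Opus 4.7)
Decompose the solution as $f = Pf + (I-P)f$ with
$Pf(t,x,v) = \{a(t,x) + b(t,x)\cdot v + c(t,x)(|v|^2-3)/\sqrt{6}\}\sqrt{\mu}(v)$,
so that $\|Pf\|_\sigma^2$ is comparable to the purely spatial quantity
$\|a\|_{L^2(\Omega)}^2 + \|b\|_{L^2(\Omega)}^2 + \|c\|_{L^2(\Omega)}^2$,
and it suffices to control these three $L^2$-norms in time-average modulo $\int_{t_0}^t \|(I-P)f\|_\sigma^2 d\tau$. Testing \eqref{linear} against each collision invariant $\chi_0,\dots,\chi_4$ yields local macroscopic balance laws relating $(\partial_t a, \partial_t b, \partial_t c)$ to spatial derivatives of $(I-P)f$ and of $\Gamma(g,f)$; this is the bounded-domain analogue of the identities used in Section 4 of \cite{2016arXiv161005346K}.

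\textbf{Main step.} For each of $a,b,c$ I will introduce a test function of the form $\psi_* = \phi_*(t,x)\cdot \Psi_*(v)$, where $\Psi_*\in N^\perp$ is a linear combination of the Burnette functions $A_j$ in \eqref{Aj} and $B_{kl}$ in \eqref{Bij}, and $\phi_*$ solves an elliptic problem on $\Omega$ whose right-hand side is (respectively) $a$, $b$, or $c$. For $a$ and $c$ I take the Neumann problems $-\Delta\phi_a = a$ with $\partial_n\phi_a = 0$ on $\partial\Omega$, and analogously for $\phi_c$; for $b$ I solve a vector-valued problem $-\Delta\phi_b - \nabla(\nabla\cdot\phi_b) = b$ with the no-penetration condition $\phi_b\cdot n_x = 0$ and a compatible tangential condition on $\partial\Omega$. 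Solvability of the scalar problems is guaranteed by the mass and energy conservation laws \eqref{conservation laws}; solvability of the vector problem requires $b$ to be orthogonal to every Killing field tangent to $\partial\Omega$, which in the rotational-symmetry case is exactly the content of the angular-momentum conservation \eqref{angcon}. Inserting $\psi_*$ into the weak formulation of \eqref{linear}, the transport pairing $(v\cdot\nabla_x\psi_*, Pf)$ produces $\|a\|_{L^2}^2 + \|b\|_{L^2}^2 + \|c\|_{L^2}^2$ on the left-hand side via the elliptic equations, while the remaining interior terms contain either $(I-P)f$ (absorbed using Lemma \ref{Lemma : Guo2002 lemma5} and Young's inequality), $Lf = L(I-P)f$, or $\Gamma(g,f)$ (absorbed using Lemma \ref{2.8} with $\|g\|_\infty < \varepsilon$ small). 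The time-boundary part $(f,\psi_*)\big|_{t_0}^{t}$ is precisely the quantity $\eta(t)-\eta(t_0)$ in the statement, and satisfies $|\eta(t)|\lesssim \|f(t)\|_2\|\phi_*(t)\|_{L^2} \lesssim \|f(t)\|_2^2$ by standard elliptic regularity together with $\|\phi_*\|_{L^2}\lesssim \|a\|_{L^2}+\|b\|_{L^2}+\|c\|_{L^2}\le \|f\|_2$.

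\textbf{Expected obstacle.} The delicate point is eliminating the spatial-boundary integrals $\int_{\partial\Omega\times\mathbb{R}^3} \gamma f\,\gamma\psi_*\,(v\cdot n_x)\,dS_xdv$ generated by the integration by parts in $x$. The Burnette basis is chosen so that these integrals cancel: $A_j$ and the off-diagonal components of $B_{kl}$ have definite parity under the reflection $v\mapsto R_x v$, and combined with the specular identity $\gamma f(v)=\gamma f(R_x v)$ the change of variable $v\mapsto R_x v$ (which preserves $|v\cdot n_x|\,dv$ but flips the sign of $v\cdot n_x$) annihilates the odd-in-$(v\cdot n_x)$ pieces. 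The no-penetration condition $\phi_b\cdot n_x = 0$ is what makes this cancellation work for the vector test function, and the Neumann conditions on $\phi_a,\phi_c$ play the analogous role for the scalar pieces. I expect the genuinely hard part to be the construction and uniform control of $\phi_b$ in the rotational-symmetric case, where the vector elliptic system has to be solved in the quotient space orthogonal to the rotational Killing modes; the compatibility afforded by \eqref{angcon} is what makes the Fredholm alternative succeed and provides the uniform estimate that ultimately yields the constant $C$ in the statement.
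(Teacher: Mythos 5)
Your proposal follows essentially the same route as the paper: test the weak formulation against functions of the form $\phi_*(t,x)\Psi_*(v)$ built from Burnette functions and solutions of Poisson problems with boundary conditions chosen to annihilate the boundary integrals via the specular symmetry (change of variable $v\mapsto R_xv$), with \eqref{conservation laws}/\eqref{angcon} supplying the compatibility conditions for solvability and $\|g\|_\infty$ small used to absorb the $\Gamma$-terms. The one visible deviation is that you propose the Lam\'e-type system $-\Delta\phi_b-\nabla(\nabla\cdot\phi_b)=b$, whereas the paper takes $-\Delta\phi_b=b$ (with $\phi_b\cdot n=0$ and tangential Neumann condition $\partial_n\phi_b=(\partial_n\phi_b\cdot n)n$) and achieves this simplification by adding a $\chi_4$-multiple to the $B_{ij}$-part of the test function so that the projected transport operator collapses to the pure Laplacian; either elliptic system delivers the needed $H^1$ bound for $\phi_b$, so this is a cosmetic variant rather than a gap.
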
Using this proposition, we will obtain the coercivity of the linearized Landau operator $L$ later.
 
 \begin{proof}
 	The proof will be given in 7 steps as follows.
 	The definition of a weak solution to \eqref{linear} is defined in \eqref{L2weak}.
 	The rest of this section is devoted to the proof of Theorem \ref{linear l2}. In terms of the macro-micro decomposition, we define \begin{equation}
 	\begin{split}
 	a(t,x)&=\langle f(t,x,v),\chi_0\rangle \\
 	b^j(t,x)&=\langle f(t,x,v),\chi_j\rangle, \ j=1,2,3,\\
 	c(t,x)&=\langle f(t,x,v),\chi_4\rangle,\\
 	d(t,x,v)&=(I-P)f(t,x,v),
 	\end{split}
 	\end{equation}
 	where the generators $\{\chi_j\}_{j=0}^4 $ are defined in \eqref{generators}.
 	Then we have 
 	\begin{equation}f=\left(a\chi_0+\sum_{j=1}^3 b^j\chi_j+c\chi_4\right)+d.
 	\end{equation}
 	The conservation laws of mass and energy \eqref{conservation laws} implies that 
 	$$\int_\Omega adx=0$$ and $$\int_\Omega cdy=0.$$ 
 	
 	We will derive the estimates of $a,b,c$ in terms of $d$. 
 	We write the linear \textit{Landau equation} \eqref{linear} in the following weak formulation:
 	\begin{multline}\label{L2weak}
 	\int_{\Omega\times \rth}\{\psi f(t)-\psi f(s)\}dxdv-\int_s^t \int_{\Omega\times \rth}f\partial_\tau\psi dxdvd\tau\\
 	=\int_s^t  \int_{\Omega\times \rth}(v\cdot\nabla_x\psi )fdxdvd\tau-\int_s^t \int_\gamma f\psi d\gamma d\tau-\int_s^t  \int_{\Omega\times \rth}\psi (Lf)dxdvd\tau\\
 	+\int_{s}^{t}\int_{\Omega\times \rth}\psi\Gamma(g,f)dxdvd\tau\eqdef  I_1+I_2+I_3+I_4,
 	\end{multline}
 	where $d\gamma\eqdef  (v\cdot n_x)dS_xdv,$ and $\psi\in C^\infty \cap L^2((s,t)\times \Omega\times \rth)$ is a test function.
 	
 	In Step 1 through Step 3 below, we consider a $t$-mollification of the functions $a,b,$ and $c$ so that they are smooth in $t$. For notational simplicity we omit the explicit parameter of the regularization.
 	
 	\textbf{Step 1. Choosing a test function $\psi= \phi(x)\sqrt
 		\mu$ in \eqref{L2weak}.}
 	In this case, we have $\int\sqrt{\mu}Lf=\int\sqrt{\mu}\ \Gamma(g,f)=0.$ Thus we have 
 	\[
 	\int_{\Omega} [a(t+ \epsilon) - a(t)]\phi(x) = \int_{t}^{t+ \epsilon}\int_{\Omega}(b\cdot
 	\nabla_{x})\phi(x).
 	\]
 	Therefore, by letting $\epsilon\rightarrow 0$, we have
 	\[
 	\int_{\Omega} \phi\partial_{t} a dx=  \int_{\Omega%
 	}(b \cdot\nabla_{x})\phi dx.
 	\]
 	By taking $\phi=1$, we have $$\int_{\Omega} \partial_{t} a(t)
 	dx =0$$ for all $t>0$. On the other hand, for all $\phi(x)\in H^{1}%
 	(\Omega)$, we have
 	\[
 	\left|  \int_{\Omega} \phi(x) \partial_{t} a dx \right|
 	\lesssim\|b\|_{2} \|\phi\|_{H^{1}}.
 	\]
 	Therefore, for all $t>0$, $$\|\partial_{t} a(t)\|_{H^{-1}_0} \lesssim
 	\|b(t)\|_{2}.$$ Since $\int_{\Omega} \partial_{t} a dx =0$ for all
 	$t>0$, by standard elliptic theory, there exists a unique weak solution $\Phi_a$ to the following Poisson equation
 	\begin{equation}
 	\begin{split}
 	-\Delta\Phi_a=&\partial_ta(t),\ \text{in}\ \Omega,\\
 	\frac{\partial \Phi_a}{\partial n}=&0, \ \text{on}\ \partial \Omega\\
 	\int_\Omega\Phi_a dx=&0.
 	\end{split}
 	\end{equation} Moreover,
 	we have
 	\begin{equation}
 	\label{dtphia}\|\nabla_{x} \Phi_a \|_{2} =
 	\|\Phi_{a}\|_{H^{1}} \lesssim\|\partial_{t} a(t)\|_{H^{-1}_0}%
 	\lesssim\|b(t)\|_{2}.
 	\end{equation}

 	\textbf{Step 2. Choosing a test function $\psi=
 		\phi(x)\chi_i$,  in \eqref{L2weak}.}  The left-hand side of \eqref{L2weak} is equal to
 	$$(LHS)=\int_{\Omega} [b^{i}(t+ \epsilon
 	)-b^{i}(t)]\phi dx.$$
 	On the other hand, note that $$v\cdot\nabla_x\psi=\partial_i\phi (\chi_0+\frac{\sqrt{6}}{3}\chi_4)+\sum_{k=1}^3 \partial_k\phi B_{ki},$$ where $B_{ki}$ is defined in \eqref{Bij}. Thus,
 	$$I_1=\int_{t}^{t+\epsilon}\int_{\Omega}
 	\partial_{i} \phi[a+\frac{\sqrt{6}}{3}c]dxd\tau + \int_{t}^{t+\epsilon}\iint_{\Omega%
 		\times\mathbb{R}^{3}}\sum_{k=1}^{3} \langle B_{ki},d\rangle\partial_jdxdvd\tau,$$ because $PB_{kl}=0.$
 	Additionally, from the fact that $\langle Lf,\chi_i\rangle=0$, we have $I_3=0$. Therefore, we have
 	\begin{multline*}\int_{\Omega} [b^{i}(t+ \epsilon
 	)-b^{i}(t)]\phi dx\\=\int_{t}^{t+\epsilon}\int_{\Omega}
 	\partial_{i} \phi[a+\frac{\sqrt{6}}{3}c]dx d\tau+ \int_{t}^{t+\epsilon}\iint_{\Omega%
 		\times\mathbb{R}^{3}}\sum_{k=1}^{3} \langle B_{ki},d\rangle\partial_jdxdvd\tau\\
 	+\int_{t}^{t+\epsilon}\int_\gamma f\phi\chi_i d\gamma d\tau+ \iint_{\Omega\times\rth} \phi \chi_i \Gamma(g,f)dxdvd\tau.
 	\end{multline*}
 	Therefore, by taking $\epsilon\rightarrow 0$, we have
 	\begin{multline*}\int_{\Omega} \partial_tb^i\phi dx=\int_{\Omega}
 	\partial_{i} \phi[a+\frac{\sqrt{6}}{3}c]dx +\iint_{\Omega%
 		\times\mathbb{R}^{3}}\sum_{k=1}^{3} \langle B_{ki},d\rangle\partial_jdxdv\\
 	+\int_\gamma f\phi\chi_i d\gamma + \int_{\Omega\times\rth} \phi \chi_i \Gamma(g,f)dxdv.
 	\end{multline*} Now for fixed $t>0$, let $\phi=\Phi^i_b$ where $\Phi^i_b$ is a solution of the following Poisson equation:
 	\begin{equation}
 	\begin{split}
 	-\Delta\Phi_b=&\partial_tb(t),\ \text{in}\ \Omega,\\
 	\Phi_b\cdot n=&0, \ \text{on}\ \partial \Omega\\
 	\partial_n\Phi_b =&(\partial_n\Phi_b\cdot n)n  \ \text{on}\ \partial \Omega.
 	\end{split}
 	\end{equation} The existence of such solutions is given in Appendix A of [YZ]. Then after summation on $i=1,2,3$, we have that the boundary integral $I_2$ is equal to 
 	$$\sum_{i=1}^3 \int_\gamma f\chi_i\Phi_b^i d\gamma=2\int_{\partial \Omega}\int_{v\cdot n>0}(\Phi_b\cdot n)(v\cdot n)^2M^{1/2}fdvdS_x=0.$$
 	By Lemma \ref{2.8}, we have
 	\[
 	\left\|  \iint_{\Omega \times\mathbb{R}^{3}}\phi\chi_{i} \Gamma(g,f)(t)
 	dxdv \right\|  \le C \|g\|_{\infty}\|f\|_{\sigma}\|\phi \chi_{i}
 	\|_{\sigma} \le\|g\|_{\infty}\|f\|_{\sigma}\|\phi\|_{2}.
 	\]
 	Thus, \begin{multline*}
 	\sum_{i=1}^3 \int_\Omega |\nabla_x\Phi^i_b|^2dx = \sum_{i=1}^3 \int_\Omega-\Delta \Phi_b^i\Phi_b^idx=\sum_{i=1}^3 \int_\Omega\Phi_b^i\partial_t b^idx\\
 	\lesssim ||\nabla_x\Phi_b||_2(||a||_2+||c||_2+||d||_2+||g||_\infty ||f||_\sigma).
 	\end{multline*}
 	Together with $||\Phi_b||_2\lesssim ||\nabla_x\Phi_b||_2$, we have 
 	\begin{equation}
 	\label{dtphib}||\Phi_b||_{H^1}\lesssim(||a||_2+||c||_2+||d||_2+||g||_\infty||f||_\sigma).
 	\end{equation}

 	\textbf{Step 3. Choosing a test function $\psi= \phi(x) \chi_4$ in \eqref{L2weak}.} 
 	From \eqref{L2weak}, we have
 	$$(LHS)=\int_\Omega (c(t+\epsilon)-c(t))\phi dx.$$
 	In this case, $$v\cdot \nabla_x \psi = \sum_{k=1}^3 \frac{\sqrt{6}}{3}(\partial_k \phi)\chi_k +\frac{\sqrt{15}}{3}\partial_k\phi A_k,$$ where $A_k$ is defined in \eqref{Aj}.  Then we have
 	$$I_1=\int_{t}^{t+\epsilon}\int_{\Omega}\left(
 	\frac{\sqrt{6}}{3}b\cdot \nabla_x\phi dxd\tau + \sum_{k=1}^{3}\frac{\sqrt{15}}{3}\partial_k\phi \langle A_k,d\rangle \right)dxd\tau,$$ and $I_2=0$. Additionally, from the fact that $\langle Lf,\chi_4\rangle=0$, we have $I_3=0$. Consequently, we have
 	\begin{multline*}
 	\int_{\Omega} \phi(x) \partial_{t} c(t)
 	= \frac{\sqrt{6}}{3}\int_{\Omega} b(t)\cdot\nabla_{x} \phi dx+\int_\Omega\sum_{k=1}^{3}\frac{\sqrt{15}}{3}\partial_k\phi \langle A_k,d\rangle dx \\+\iint_{\Omega \times
 		\mathbb{R}^{3}}\phi\Gamma(g,f)(t) \chi_4dxdv.
 	\end{multline*}
 	Note that 
 	$$\left|\int_\Omega\sum_{k=1}^{3}\partial_k\phi \langle A_k,d\rangle dx\right|\lesssim ||d||_2||\nabla_x\phi||_2.$$
 	Also, by Lemma \ref{2.8}, we have
 	\[
 	\left\|  \iint_{\Omega \times\mathbb{R}^{3}}\phi\Gamma(g,f)(t) \chi_4 dxdv\right\|  \le\|g\|_{\infty
 	}\|f\|_{\sigma}\|\phi\|_{2}.
 	\]
 	Now for fixed $t>0$, let $\phi=\Phi_c$ where $\Phi_c$ is a solution of the following Poisson equation:
 	\begin{equation}
 	\begin{split}
 	-\Delta\Phi_c=&\partial_tc(t)\ \text{in}\ \Omega,\\
 	\frac{\partial\Phi_c}{\partial n}=&0 \ \text{on}\ \partial \Omega.
 	\end{split}
 	\end{equation} Then 
 	\begin{multline*} \int_\Omega|\nabla_x\Phi_c|^2dx=\int_\Omega -\Delta\Phi_c \Phi_cdx=\int_\Omega \Phi_c \partial_tcdx\\
 	\lesssim ||\nabla_x\Phi_c||_2(||b||_2+||d||_2+||g||_\infty||f||_\sigma).\end{multline*}
 	Therefore, for all $t>0$,
 	\begin{equation}
 	\label{dtphic}||\Phi_c||_{H^1}\lesssim ||b||_2+||d||_2+||g||_\infty||f||_\sigma.
 	\end{equation}

 	\textbf{Step 4. Estimate of $c$.} We choose a test function $$\psi=\psi_c= \left( |v|^{2}-5\right)
 	\sqrt{\mu} v\cdot\nabla_{x} \phi_{c}=\sqrt{10}\sum_{j=1}^3A_j \partial_j \phi_c,$$ where $A_j$ is defined in \eqref{Aj} and $\phi_c$ is a solution of the following Poisson system: \begin{equation}
 	\begin{split}
 	-\Delta\phi_c=&c(t)\ \text{in}\ \Omega,\\
 	\frac{\partial\phi_c}{\partial n}=&0 \ \text{on}\ \partial \Omega.
 	\end{split}
 	\end{equation} We have\begin{equation}
 	\label{weak2}\begin{split}
 	-\int_0^t v\cdot \nabla_x\psi fdxdvd\tau
 	&  = -\iint_{\Omega \times\mathbb{R}^{3}} (\psi f(t) -\psi f(0))\\
 	&  \quad- \int_0^t\iint_{\Omega\times \rth}\psi \partial_\tau f dxdvd\tau-\int_0^t\int_\gamma \psi f d\gamma d\tau\\
 	&\quad+ \int_{0}^{t} \iint_{\Omega \times\mathbb{R}^{3}}
 	\psi\Gamma(g,f)dxdvd\tau.
 	\end{split}
 	\end{equation} Note that we have
 	\begin{multline*}
 	v\cdot \nabla_x \psi=\sum_{j,k=1}^3 \partial_{jk}^2\phi_cv^kv^j(|v|^2-5)\sqrt{\mu}\\
 	=\frac{5\sqrt{6}}{3}\Delta\phi_c\chi_4+\sum_{j,k=1}^3 \partial_{jk}^2\phi_c(I-P)(v^kv^j(|v|^2-5)\sqrt{\mu}).
 	\end{multline*}
 	Therefore, the left-hand side of \eqref{weak2} is now equal to
 	\begin{multline*}
 	-\int_0^t v\cdot \nabla_x\psi fdxdvd\tau\\=\frac{5\sqrt{6}}{3}\int_0^t\int_\Omega -\Delta\phi_c cdxd\tau-\sum_{j,k=1}^3\int_0^t\int_\Omega \partial_{jk}^2\phi_c\langle d,v^kv^j(|v|^2-5)\sqrt{\mu}\rangle dxd\tau\\
 	=\frac{5\sqrt{6}}{3}\int_0^t\int_\Omega c^2 dxd\tau +K_1,
 	\end{multline*}
 	where, for any $\epsilon>0$,
 	$$|K_1|\leq \epsilon^2\int_0^t ||c||^2_2d\tau +\frac{1}{\epsilon^2}\int_0^t ||d||^2_2 d\tau.$$
 	We will now estimate each term on the right-hand side of \eqref{weak2}. By the estimate \eqref{dtphic}, we have
 	\begin{multline*}
 	\left|\int_0^t\iint_{\Omega\times \rth}\psi\partial_\tau f dxdvd\tau\right|\lesssim \sum_{j=1}^3 \int_0^t \int_\Omega |\partial_{\tau j}^2\phi_c\langle A_j,d\rangle |dxd\tau\\
 	\lesssim \int_0^t ||\partial_\tau \phi_c||_{H^1}||d||_2d\tau
 	\lesssim \int_0^t (||b||_2+||d||_2+||g||_\infty||f||_\sigma)||d||_2d\tau\\
 	\lesssim \epsilon^2\int_0^t ||b||^2_2d\tau+C_\epsilon\int_0^t( ||d||^2_2+||g||^2_\infty||f||^2_\sigma)d\tau.
 	\end{multline*}
 	By the boundary conditions of $\phi_c$ and $f$, we have 
 	\begin{multline*}
 	\int_0^t\int_\gamma \psi f d\gamma d\tau\\
 	=\sum_{j=1}^3 \int_{\partial \Omega}\partial_j\phi_c \left(\int_{v\cdot n>0}+\int_{v\cdot n<0}\right)(v\cdot n) v^j(|v|^2-5)\sqrt{\mu}fdvdS_x\\
 	=\sum_{j=1}^3 \int_{\partial \Omega}\partial_j\phi_c \int_{v\cdot n>0}(v\cdot n) v^j(|v|^2-5)\sqrt{\mu}fdvdS_x\\
 	+\sum_{j=1}^3 \int_{\partial \Omega}\partial_j\phi_c \int_{v\cdot n<0}(v\cdot n) v^j(|v|^2-5)\sqrt{\mu}fdvdS_x\\
 	=\sum_{j=1}^3 \int_{\partial \Omega}\partial_j\phi_c \int_{v\cdot n>0}(v\cdot n) v^j(|v|^2-5)\sqrt{\mu}fdvdS_x\\
 	+\sum_{j=1}^3 \int_{\partial \Omega}\partial_j\phi_c \int_{v\cdot n>0}(-v\cdot n) (v^j-2(v\cdot n)n^j)(|v|^2-5)\sqrt{\mu}fdvdS_x\\
 	=2\sum_{j=1}^3 \int_{\partial \Omega}\frac{\partial\phi_c}{\partial n} \int_{v\cdot n>0}(v\cdot n)^2(|v|^2-5)\sqrt{\mu}fdvdS_x
 	=0.
 	\end{multline*}
 	Also,
 	\[
 	\iint_{\Omega \times\mathbb{R}^{3}} \psi\Gamma(g,f)dxdv \le C\|g\|_{\infty
 	}\|f\|_{\sigma}\|c\|_{2}\le\epsilon\|c\|_{2}^{2} + C_{ \epsilon
 	}\|g\|_{\infty}^{2}\|f\|_{\sigma}^{2}.
 	\]
 	For a small $\epsilon>0$, we can absorb $\epsilon^2\|c\|_{2}^{2}$ on the RHS to the LHS.
 	Altogether, we have
 	\begin{multline}
 	\label{Eq : estimate c}%
 	\int_{0}^{t} \|c(s)\|^{2} ds\le C \int_{\Omega\times \rth}(-\psi_cf(t)+\psi_cf(0))dxdv \\
 	+ \int_{0}^{t} C_{\epsilon}\left\{||d||_2^2 + \|g\|_{\infty}^{2}\|f\|_{\sigma}^{2}\right\}  +\epsilon\|b\|_{2}^{2} ds.
 	\end{multline}

 	\textbf{Step 5. Estimate of $b$.} 
 	Now for fixed $t>0$, let $\phi=\phi^i_b$ where $\phi^i_b$ is a solution of the following Poisson equation:
 	\begin{equation}
 	\begin{split}
 	-\Delta\phi_b=&b(t),\ \text{in}\ \Omega,\\
 	\phi_b\cdot n=&0, \ \text{on}\ \partial \Omega\\
 	\partial_n\phi_b =&(\partial_n\phi_b\cdot n)n  \ \text{on}\ \partial \Omega.
 	\end{split}
 	\end{equation} The existence of such solutions is given in Appendix A of [YZ].  Now set
 	\begin{multline}\label{phib}\psi=\psi_b= \sum_{i,j=1}^3 \partial_j \phi_b^iv^iv^j\sqrt{\mu}    -\sum_{i=1}^3 \partial_i \phi_b^i \frac{|v|^2-1}{2}\sqrt{\mu}\\
 	=\sum_{i,j=1}^3\partial_j \phi_b^iB_{ij}-\sum_{i=1}^3\frac{\sqrt{6}}{6} \partial_i \phi_b^i \chi_4,
 	\end{multline} where $B_{ij}$ is defined in \eqref{Bij}.
 	Then we have
 	$$
 	v\cdot \nabla_x \psi=\sum_{i=1}^3 \Delta\phi^i_B\chi_i+\sum_{i,j,k=1}^3\partial^2_{jk}\phi_b^i(I-P)(v^iv^jv^k\sqrt{\mu}).  
 	$$
 	Therefore, the left-hand side of \eqref{weak2} is now equal to
 	\begin{multline*}
 	-\int_0^t v\cdot \nabla_x\psi fdxdvd\tau\\=-\sum_{i=1}^3\int_0^t\int_\Omega b^i\Delta\phi_b^idxd\tau -\sum_{i,j,k=1}^3 \int_0^t\int_\Omega \partial^2_{jk}\phi_b^i \langle v^iv^jv^k\sqrt{\mu}, d\rangle dxd\tau\\
 	=\int_0^t\int_\Omega |b|^2 dxd\tau + K_2,
 	\end{multline*}
 	where, for any $\epsilon>0$,
 	$$|K_2|\leq \epsilon^2\int_0^t ||b||^2_2d\tau +\frac{1}{\epsilon^2}\int_0^t ||d||^2_2 d\tau.$$
 	We will now estimate each term on the right-hand side of \eqref{weak2}. Note that $\Phi_b=\partial_t\phi_b$. An integration by parts, \eqref{phib}, and \eqref{dtphib} yield that
 	\begin{multline*}
 	\left|\int_0^t\iint_{\Omega\times \rth}\psi\partial_\tau f dxdvd\tau\right|\lesssim  \int_0^t \int_\Omega (||c||_2+||d||_2) \|\partial_{\tau }\nabla_x\phi_b\|_2 dxd\tau\\
 	\lesssim \epsilon^2\int_0^t ||a||^2_2d\tau+C_\epsilon\int_0^t(||c||^2_2+ ||d||^2_2+||g||^2_\infty||f||^2_\sigma)d\tau.
 	\end{multline*}
 	Also, we have 
 	\begin{multline*}
 	\int_0^t\int_\gamma \psi f d\gamma d\tau\\
 	=\int_0^t\int_\gamma\left(\sum_{i,j=1}^3\partial_j\phi^i_bv^iv^j\sqrt{\mu}-\sum_{i=1}^3\partial_i\phi^i_b\frac{|v|^2-1}{2}\sqrt{\mu}\right)(v\cdot n)fdvdS_xd\tau\\
 	=\int_0^t\int_\gamma \sum_{i,j=1}^3\partial_j\phi^i_bv^iv^j\sqrt{\mu}(v\cdot n)fdvdS_xd\tau,
 	\end{multline*} because the integration on $v\cdot n>0$ cancels out the integration on $v\cdot n<0$ for the latter sum. 
 	Define $$K_{20}\eqdef   \int_\rth\sum_{i,j=1}^3\partial_j\phi^i_bv^iv^j\sqrt{\mu}(v\cdot n)fdv.$$ Using coordinate change, we let $n=(1,0,0)$ without loss of generality. Then we have
 	\begin{multline*}
 	K_{20}=\partial_1 \phi_b^1\int_\rth (v^1)^3\sqrt{\mu}fdv+\sum_{i,j=2}^3\partial_j\phi_b^i\int_\rth v^1v^iv^j\sqrt{\mu}fdv\\
 	+\sum_{j=2}^3\left(\partial_1\phi_b^j\int_\rth (v^1)^2v^j\sqrt{\mu}fdv+\partial_j\phi_b^1\int_\rth (v^1)^2v^j\sqrt{\mu}fdv\right).\end{multline*}
 	The first two integrals in $K_{20}$ is odd in $v^1$ and the specular reflection boundary condition of $f$ gives $f(v)=f(-v^1,v^2,v^3).$ Thus, the first two integrals are zero. The third integral in $K_{20}$ contains $\partial_1\phi^i_b$ for $i=2,3$ and this is zero by the boundary condition that $\partial_n\phi_b=(\partial_n\phi_b\cdot n)n=(\partial_1\phi_b^1,0,0).$ The last term in $K_{20}$ contains $\partial_j\phi_b^1$ for $j=2,3$ and this is zero because $\phi^1_b=0$ on $\partial\Omega$ by the boundary condtion $\phi_b\cdot n=0$. Therefore, $K_{20}=0$ and hence $$\int_0^t\int_\gamma \psi f d\gamma d\tau=0.$$
 	Also,
 	\[
 	\iint_{\Omega \times\mathbb{R}^{3}} \psi\Gamma(g,f)dxdv \le C\|g\|_{\infty
 	}\|f\|_{\sigma}\|b\|_{2}\le\epsilon\|b\|_{2}^{2} + C_{ \epsilon
 	}\|g\|_{\infty}^{2}\|f\|_{\sigma}^{2}.
 	\]
 	For a small $\epsilon>0$, we can absorb $\epsilon^2\|b\|_{2}^{2}$ on the RHS to the LHS.
 	Altogether, we have
 	\begin{multline}
 	\label{Eq : estimate b}%
 	\int_{0}^{t} \|b(s)\|^{2} ds\le C \int_{\Omega\times \rth}(-\psi_bf(t)+\psi_bf(0))dxdv \\
 	+ \int_{0}^{t} C_{\epsilon}\left\{||c||_2^2+||d||_2^2 + \|g\|_{\infty}^{2}\|f\|_{\sigma}^{2}\right\}  +\epsilon(||a||_2^2+\|b\|_{2}^{2} )ds.
 	\end{multline}

 	\textbf{Step 6. Estimate of $a$.} 
 	We choose a test function $$\psi= \psi_a= \left( |v|^{2}-10\right)
 	\sqrt{\mu}\sum_{j=1}^3\partial_j\phi_av^j =\sum_{j=1}^3(\sqrt{10}A_j-5\chi_j) \partial_j \phi_a,$$ where $A_j$ is defined in \eqref{Aj} and $\phi_a$ is a solution of the following Poisson system: \begin{equation}
 	\begin{split}
 	-\Delta\phi_a=&a(t)\ \text{in}\ \Omega,\\
 	\frac{\partial\phi_a}{\partial n}=&0 \ \text{on}\ \partial \Omega.
 	\end{split}
 	\end{equation} Note that we have
 	\begin{multline*}
 	v\cdot \nabla_x \psi=\sum_{j,k=1}^3 \partial_{jk}^2\phi_av^kv^j(|v|^2-10)\sqrt{\mu}\\
 	=5\Delta\phi_a\chi_0+\sum_{j,k=1}^3 \partial_{jk}^2\phi_a(I-P)(v^kv^j(|v|^2-10)\sqrt{\mu}).
 	\end{multline*}
 	Therefore, the left-hand side of \eqref{weak2} is now equal to
 	\begin{multline*}
 	-\int_0^t v\cdot \nabla_x\psi fdxdvd\tau\\=5\int_0^t ||a||^2_2 d\tau-\sum_{j,k=1}^3\int_0^t\int_\Omega \partial_{jk}^2\phi_a\langle d,v^kv^j(|v|^2-10)\sqrt{\mu}\rangle dxd\tau\\
 	=5\int_0^t ||a||^2_2 d\tau+K_3,
 	\end{multline*}
 	where, for any $\epsilon>0$,
 	$$|K_3|\leq \epsilon^2\int_0^t ||a||^2_2d\tau +\frac{1}{\epsilon^2}\int_0^t ||d||^2_2 d\tau.$$
 	We will now estimate each term on the right-hand side of \eqref{weak2}. By the estimate \eqref{dtphia}, we have
 	\begin{multline*}
 	\left|\int_0^t\iint_{\Omega\times \rth}\psi\partial_\tau f dxdvd\tau\right|\\
 	\lesssim \int_0^t ||\partial_\tau \phi_a||_{H^1}(||b||_2+||d||_2)d\tau
 	\lesssim \int_0^t (||b||_2+||d||_2)||b||_2d\tau\\
 	\lesssim \int_0^t( ||b||^2_2+||d||^2_2)d\tau.
 	\end{multline*}
 	By the boundary conditions of $\phi_a$ and $f$, we have 
 	\begin{multline*}
 	\int_0^t\int_\gamma \psi f d\gamma d\tau\\
 	=\sum_{j=1}^3 \int_{\partial \Omega}\partial_j\phi_c \left(\int_{v\cdot n>0}+\int_{v\cdot n<0}\right)(v\cdot n) v^j(|v|^2-10)\sqrt{\mu}fdvdS_x\\
 	=\sum_{j=1}^3 \int_{\partial \Omega}\partial_j\phi_c \int_{v\cdot n>0}(v\cdot n) v^j(|v|^2-10)\sqrt{\mu}fdvdS_x\\
 	+\sum_{j=1}^3 \int_{\partial \Omega}\partial_j\phi_c \int_{v\cdot n<0}(v\cdot n) v^j(|v|^2-10)\sqrt{\mu}fdvdS_x\\
 	=\sum_{j=1}^3 \int_{\partial \Omega}\partial_j\phi_c \int_{v\cdot n>0}(v\cdot n) v^j(|v|^2-10)\sqrt{\mu}fdvdS_x\\
 	+\sum_{j=1}^3 \int_{\partial \Omega}\partial_j\phi_c \int_{v\cdot n>0}(-v\cdot n) (v^j-2(v\cdot n)n^j)(|v|^2-10)\sqrt{\mu}fdvdS_x\\
 	=2\sum_{j=1}^3 \int_{\partial \Omega}\frac{\partial\phi_c}{\partial n} \int_{v\cdot n>0}(v\cdot n)^2(|v|^2-10)\sqrt{\mu}fdvdS_x
 	=0.
 	\end{multline*}
 	Also,
 	\[
 	\iint_{\Omega \times\mathbb{R}^{3}} \psi\Gamma(g,f)dxdv \le C\|g\|_{\infty
 	}\|f\|_{\sigma}\|a\|_{2}\le\epsilon\|a\|_{2}^{2} + C_{ \epsilon
 	}\|g\|_{\infty}^{2}\|f\|_{\sigma}^{2}.
 	\]
 	For a small $\epsilon>0$, we can absorb $\epsilon^2\|a\|_{2}^{2}$ on the RHS to the LHS.
 	Altogether, we have
 	\begin{multline}
 	\label{Eq : estimate a}%
 	\int_{0}^{t} \|a(s)\|^{2}_2 ds\le C \int_{\Omega\times \rth}(-\psi_af(t)+\psi_af(0))dxdv \\
 	+ \int_{0}^{t} C_{\epsilon}\left\{||d||_2^2 + \|g\|_{\infty}^{2}\|f\|_{\sigma}^{2}+\|b\|_{2}^{2}\right\} ds.
 	\end{multline}

 	\textbf{Step 7. Proof of Proposition \ref{Macromicro}.}
 	We will combine \eqref{Eq : estimate c}, \eqref{Eq : estimate b}, and
 	\eqref{Eq : estimate a} as follows: We choose sufficiently small $\delta_a>0$ such that the coefficient $C_\epsilon$ in \eqref{Eq : estimate a} multiplied by $\delta_a$ is less than $\frac{1}{2}.$ Here $\epsilon>0$ in \eqref{Eq : estimate a} is chosen such that $\epsilon<\frac{\delta_a}{2}$. Then we obtain \begin{multline}\label{estimate ab}\min\left\{\frac{1}{2},\frac{\delta_a}{2}\right\} \int_0^t (||a||^2_2+||b||^2_2)ds \\\leq C'\int_{\Omega\times \rth}(-\psi_af(t)-\psi_bf(t)+\psi_af(0)+\psi_bf(0))dxdv \\
 	+ \int_{0}^{t} C''\left\{||d||_2^2 + \|g\|_{\infty}^{2}\|f\|_{\sigma}^{2}+\|c\|_{2}^{2}\right\} ds.\end{multline} Finally, we choose sufficiently small $\delta_b>0$ such that $\delta_bC''<\frac{1}{2}.$ Then, we choose sufficiently small $\epsilon>0$ in \eqref{Eq : estimate c} such that $\epsilon<\frac{1}{2}\delta_b\times \min\left\{\frac{1}{2},\frac{\delta_a}{2}\right\}.$ Then we take \eqref{Eq : estimate c}+$\delta_b\times$\eqref{estimate ab} and obtain 
 	\begin{multline}
 	\int_{0}^{t} \|P f\|_{\sigma}^{2}   ds \\\leq C_0\int_{\Omega\times \rth}(-\psi_af(t)-\psi_bf(t)-\psi_cf(t)+\psi_af(0)+\psi_bf(0)+\psi_cf(0))dxdv\\
 	+C'''\int
 	_{0}^{t} \left\{  \|(I-P)f(s)\|_{2}^{2} +
 	\|g(s)\|_{\infty}^{2} \|f(s)\|_{\sigma}^{2} \right\}  ds\\
 	\leq C_0\int_{\Omega\times \rth}(-\psi_af(t)-\psi_bf(t)-\psi_cf(t)+\psi_af(0)+\psi_bf(0)+\psi_cf(0))dxdv\\
 	+C'''\int
 	_{0}^{t} \left\{  (1+||g||^2_\infty)\|(I-P)f(s)\|_{\sigma}^{2} +
 	\|g(s)\|_{\infty}^{2} \|Pf(s)\|_{\sigma}^{2} \right\}  ds.
 	\end{multline}Now we choose sufficiently small $\epsilon'>0$ with $||g(s)||^2_\infty<\epsilon'$ for all $s$ such that $C'''\times ||g(s)||^2_\infty <\frac{1}{2},$ so the $||Pf||_\sigma^2$ integral on the RHS can be absorbed in the LHS. Now we define $$\eta(t)\eqdef -\int_{\Omega\times \rth}(\psi_af(t)+\psi_bf(t)+\psi_cf(t))dxdv.$$ Then $|\eta|\lesssim ||f||_2^2.$ This completes the proof for Proposition \ref{Macromicro}.
 \end{proof}
 
 Using this proposition, we can obtain the following coercivity of the linearized Landau operator $L$:\begin{corollary}
 	\label{Coro : coercivity}Assume $\|g\|_{\infty} < \epsilon$ for some $\epsilon>0$.
 	Let $f$ be a weak solution of \eqref{linear} and \eqref{initial} with \eqref{conservation laws} and \eqref{angcon}.  Then there exist a constant $0< \delta^{\prime
 	}\le1/4$ and a function $0\le\eta(t) \le C\|f(t)\|_{2}^{2}$, such that
 	\begin{equation}
 	\label{Eq : coercivity}\int_{s}^{t} (L[f(\tau)],f(\tau))d \tau\ge
 	\delta^{\prime}\left(  \int_{s}^{t} \|f(\tau)\|_{\sigma}^{2} d \tau-
 	\{\eta(t)-\eta(s)\} \right).
 	\end{equation}Here $C>0$ is the same constant as the one in Proposition \ref{Macromicro}.
 	
 \end{corollary}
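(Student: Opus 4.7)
The plan is to combine the two ingredients already assembled: Lemma \ref{Lemma : Guo2002 lemma5}, which bounds $L$ from below by the microscopic $\sigma$-norm, and Proposition \ref{Macromicro}, which controls the macroscopic $\sigma$-norm by the microscopic one up to the time-boundary term $\eta(t)-\eta(s)$. No further PDE work is needed; this is essentially an algebraic combination of these two estimates.

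Integrating the pointwise-in-$x$ bound of Lemma \ref{Lemma : Guo2002 lemma5} over $\Omega\times[s,t]$ gives
\begin{equation*}
\int_s^t (Lf(\tau),f(\tau))\,d\tau \;\ge\; \delta \int_s^t \|(I-P)f(\tau)\|_\sigma^2\,d\tau .
\end{equation*}
I would then split $\|f\|_\sigma^2 \le 2\|Pf\|_\sigma^2 + 2\|(I-P)f\|_\sigma^2$ and invoke Proposition \ref{Macromicro} on the first term to obtain
\begin{equation*}
\int_s^t \|f(\tau)\|_\sigma^2\,d\tau \;\le\; 2\big(\eta(t)-\eta(s)\big) + 2(C+1)\int_s^t \|(I-P)f(\tau)\|_\sigma^2\,d\tau .
\end{equation*}
Rearranging and chaining with the first display yields \eqref{Eq : coercivity} with $\delta' = \min\{1/4,\,\delta/(2(C+1))\}$, once the factor $2$ in front of $\eta(t)-\eta(s)$ is absorbed into a harmless rescaling of $\eta$.

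The only cosmetic issue is the sign requirement $0\le \eta(t)\le C\|f(t)\|_2^2$, whereas Proposition \ref{Macromicro} only delivers $|\eta(t)|\le C\|f(t)\|_2^2$. I would remedy this by replacing $\eta$ with $\eta + C\|f\|_2^2$; this is automatically in $[0,\,2C\|f\|_2^2]$, and the correction $C(\|f(t)\|_2^2-\|f(s)\|_2^2)$ introduced in the difference can be absorbed into the main inequality at the cost of slightly shrinking $\delta'$, using the basic $L^2$ a-priori control already established earlier in the section. The main obstacle is precisely this bookkeeping of constants, which must remain independent of $t$ and uniform in the small parameter $\varepsilon$ controlling $\|g\|_\infty$; however, since both Lemma \ref{Lemma : Guo2002 lemma5} and Proposition \ref{Macromicro} provide uniform constants, this poses no serious difficulty.
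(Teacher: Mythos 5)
Your core argument reproduces the paper's own proof: integrate the coercive lower bound of Lemma~\ref{Lemma : Guo2002 lemma5}, then use Proposition~\ref{Macromicro} to replace $\|(I-P)f\|_\sigma^2$-control with full $\|f\|_\sigma^2$-control at the price of the $\eta(t)-\eta(s)$ term. The paper does the bookkeeping by writing $\delta=\delta\frac{C}{1+C}+\delta\frac{1}{1+C}$ and (tacitly) treating the $\sigma$-norm as if it were orthogonal with respect to the $P/(I-P)$ splitting; your version with the explicit parallelogram factor $\|f\|_\sigma^2\le 2\|Pf\|_\sigma^2+2\|(I-P)f\|_\sigma^2$ is a touch more careful and costs only a different numerical constant in $\delta'$.

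The one step that does not go through is your remedy for the sign of $\eta$. Replacing $\eta$ by $\eta+C\|f\|_2^2$ changes $\eta(t)-\eta(s)$ by $C\big(\|f(t)\|_2^2-\|f(s)\|_2^2\big)$, and since $\eta$ enters \eqref{Eq : coercivity} with a minus sign in a lower bound, this correction can only be absorbed ``for free'' when $\|f(t)\|_2\ge\|f(s)\|_2$ — exactly the opposite of the decay one expects. Shrinking $\delta'$ does not help either, because the extra term is not of the form $\int_s^t\|f\|_\sigma^2$. This is, however, a blemish in the statement rather than in your argument: Proposition~\ref{Macromicro} only delivers $|\eta(t)|\le C\|f(t)\|_2^2$, and the paper's own proof of the corollary simply reuses that same $\eta$ without making it non-negative. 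The property that actually gets used downstream (in the energy/decay estimates of Theorem~\ref{linear l2}) is $|\eta(t)|\lesssim\|f(t)\|_2^2$, so the safest reading is that the ``$0\le\eta$'' in the corollary is shorthand for that absolute-value bound, and you should state your conclusion with $|\eta|$ rather than try to force non-negativity.
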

 
 \begin{proof}
 	By Lemma \ref{Lemma : Guo2002 lemma5} and Proposition \ref{Macromicro}, we have
 	\begin{multline*}
 	\int_{s}^{t} (L[f(\tau)],f(\tau))d \tau   \ge\delta\int_{s}^{t}
 	\|(I-P)f(\tau)\|_{\sigma}^{2} d \tau\\
 	\ge\delta\frac{C}{1+C} \int_{s}^{t} \|(I-P)f(\tau)\|_{\sigma}^{2} d
 	\tau+ \delta\frac{1}{1+C} \int_{s}^{t} \|(I-P)f(\tau)\|_{\sigma}^{2} d
 	\tau\\  \ge\delta\frac{C}{1+C} \int_{s}^{t} \|(I-P)f(\tau)\|_{\sigma}^{2} d
 	\tau+ \delta\frac{1}{1+C} C \left(  \int_{s}^{t} \|P f(\tau)\|_{\sigma
 	}^{2} d \tau- \{\eta(t) - \eta(s)\}\right) \\
 	= \frac{C \delta}{1+C} \left(  \int_{0}^{t} \|f(\tau)\|_{\sigma}^{2} d
 	\tau- \{\eta(t) - \eta(s)\} \right).
 	\end{multline*}
 \end{proof}
 
 Now the main ingredient, the coercivity of $L$, is ready for the proof of Theorem \ref{linear l2}. Then the rest of the proof for Theorem \ref{linear l2} follows by standard weighted energy estimates with extra weight $w^\vartheta$, which is exactly the same as the proof for Theorem 1.4 of \cite{2016arXiv161005346K} except that we replace the spatial domain $\mathbb{T}^3$ by $\Omega.$

 \section{Proof of Theorem \ref{Thm : main result}: $L^2\!\rightarrow\!L^\infty\!\rightarrow $H\"{o}lder$ \rightarrow\! S^p$ estimates and global well-posedness}
 \label{boundary flattening and mirror extension}
 Since we would like to construct an $L^2$ and $L^\infty$ global weak solution to the nonlinear \textit{Landau equation}, the strategy is basically to follow the same $L^2\!\rightarrow\!L^\infty$ framework as in \cite{2016arXiv161005346K} with some additional treatment and modifications for the boundary. 
 The modifications involve a delicate (yet natural) change of coordinates flattening the boundary and then adapt the mirror extension, so that the various estimates can be applied to our extended solution in the whole space. For the sake of simplicity, we include only the ingredients specific to the specular reflection boundary problem, while refrain from repeating the same arguments in the original paper, although the applicability is checked and explained.

In this section, we will look at a reformulation of the linearized equation (\ref{linearized-eq}) to bootstrap $L^\infty$ and H\"{o}lder estimates from $L^2$ solutions using the machinery developed for a class of kinetic Fokker-Planck equations by \cite{golse2016harnack}, thereby deriving the $S^p$ estimate to get the uniqueness. Our goal is to carry out this kind of procedure in the bounded domain with the specular reflection boundary condition.
 \subsection{Extension of solutions to the whole space}
 In this subsection, we will show step by step the way of extending our equation satisfied on a bounded domain with specular-reflection BC to a whole space problem.

 \subsubsection{``Boundary-flattening'' transformation.}
 Let
 \begin{equation}
 \begin{array}{rcl}
 \vec{\Phi} :\;\; \overline{\Omega}\times \mathbb{R}^3 & \;\rightarrow\; & \overline{\mathbb{H}}_{-}\!\times \mathbb{R}^3 \\
 (x,v) \;& \;\mapsto\; & (y,w) \eqdef  \big(\vec{\phi}(x),A v \big)
 \end{array} \label{Phi}
 \end{equation}
 be the (local) transformation that flattens the boundary, where
 \begin{equation*}
 A\eqdef \Big[\frac{\partial y}{\partial x}\Big]=D\vec{\phi}
 \end{equation*}
 is a non-degenerate $3\!\times\! 3$ Jacobian matrix, and the explicit definition of $y\!=\!\vec{\phi}(x)$ will be given below.
 Let
 \begin{equation}
 \tilde{f}(t,y,w) \eqdef  f\big(t,\vec{\Phi}^{-1}(y,w)\big) = f\big(t,\vec{\phi}^{-1}(y),A^{-1}w\big) = f(t,x,v). \label{f-trans}
 \end{equation}
 denote the solution under the new coordinates.
 
 \begin{remark}
 It is crucial that we define our transformation $\vec{\Phi}$ for both $(x,v)$ variables in this certain form so that it preserves the characteristics and the transport operator as explained in the above section (see also the subsection of \ref{Transformed-Equation} below for more details).  \end{remark}
 Suppose the boundary $\partial\Omega$ is (locally) given by the graph $x_3 = \rho(x_1,x_2)$, and $\big\{(x_1,x_2,x_3)\!\in\!\mathbb{R}^3: x_3\!<\!\rho(x_1,x_2)\big\} \subseteq \Omega$.
 Inspired by Lemma 15 in \cite{MR3592757}\footnote{The authors used spherical-type coordinates to make the map almost globally defined; here we just prefer the standard coordinates for simplicity.}, we define $y\!=\!\vec{\phi}(x)$ explicitly as follows: 
 
 \begin{align*}
 \vec{\phi}^{-1} :\;\; 
 \begin{pmatrix} 
 y_1 \\ y_2 \\ y_3
 \end{pmatrix}
 & \mapsto\; \vec{\eta}(y_1,y_2) + y_3\cdot\vec{n}(y_1,y_2) \\
 & =\;
 \begin{pmatrix} 
 y_1 \\ y_2 \\  \rho(y_1,y_2)
 \end{pmatrix}
 + y_3\cdot 
 \begin{pmatrix} 
 -\partial_1\rho \\ -\partial_2\rho \\ 1
 \end{pmatrix} \\[5pt]
 & =\;
 \begin{pmatrix} 
 y_1 - y_3\cdot\rho_1 \\ y_2 - y_3\cdot\rho_2 \\ \rho + y_3
 \end{pmatrix}
 =:
 \begin{pmatrix} 
 x_1 \\ x_2 \\ x_3
 \end{pmatrix}
 \end{align*}
 where we denote by $\rho=\rho(y_1,y_2)$, $\rho_i=\partial_i\rho(y_1,y_2),  i\!=\!1,2$, and 
 \begin{align*}
 \vec{\eta}(y_1,y_2) &\eqdef  \big(y_1, y_2, \rho(y_1,y_2) \big)  \in \partial\Omega \\[2pt]
 \partial_1\vec{\eta} &\eqdef  \frac{_{\partial\vec{\eta}}}{^{\partial y_1}} = \langle 1,0,\rho_1 \rangle \\
 \partial_2\vec{\eta} &\eqdef  \frac{_{\partial\vec{\eta}}}{^{\partial y_2}} = \langle 0,1,\rho_2 \rangle,
 \end{align*}
 then the (outward) normal vector at the point $\vec{\eta}(y_1,y_2) \in\partial\Omega$ is chosen to be
 \begin{equation*}
 \vec{n}(y_1,y_2) \eqdef  \partial_1\vec{\eta} \times \partial_2\vec{\eta} = \langle -\rho_1,-\rho_2,1  \rangle.
 \end{equation*}
 
 From the above definition we can see that the transformation $\vec{\phi}$ is ``boundary-flattening'' because it maps the points on the boundary $\{x_3 \!=\! \rho(x_1,x_2)\}$ to the plane $\{y_3\!=\!0\}$.
 We also remark that the map is locally well-defined and is a smooth homeomorphism in a tubular neighborhood of the boundary (see Lemma 15 of \cite{MR3592757} for the rigorous proof). 
 
 Directly we compute the Jacobian matrix
 \begin{align*}
 A^{-1} = D\vec{\phi}^{-1} = \Big[\frac{\partial x}{\partial y}\Big] 
 =&\; \big[ \partial_1\vec{\eta}+y_3\!\cdot\!\partial_1\vec{n} ;  \partial_2\vec{\eta}+y_3\!\cdot\!\partial_2\vec{n} ;  \vec{n} \big] \\
 =&  
 \begin{pmatrix} 
  1\!-\!y_3\!\cdot\!\rho_{11} &\;\; -y_3\!\cdot\!\rho_{12} &\;\; -\rho_1  \\[2pt]
 -y_3\!\cdot\!\rho_{12} &\;\; 1\!-\!y_3\!\cdot\!\rho_{22} &\;\; -\rho_2 \\[2pt]
 \rho_1 & \rho_2 & 1 
 \end{pmatrix} \\
 \xrightarrow{\text{on}\;\partial\Omega :\;  y_3=0}&\;  
 \big[ \partial_1\vec{\eta} ;  \partial_2\vec{\eta} ;  \vec{n} \big] \\[2pt]
 =&  
 \begin{pmatrix}
 1 &\; 0 &\; -\rho_1 \\[2pt]
 0 &\; 1 &\; -\rho_2 \\[2pt]
  \rho_1 &\;  \rho_2 & 1 
 \end{pmatrix}.
 \end{align*}
 
 So we can write out $\vec{\Phi}^{-1}$ as
 \begin{equation}
 \vec{\Phi}^{-1} :\;\; ( y,w) \; \;\mapsto\;  ( x,v) \eqdef  \big( \vec{\phi}^{-1}(y),  A^{-1}w \big)
 \end{equation}
 \begin{align*}
 \begin{pmatrix} 
 w_1 \\ w_2 \\ w_3
 \end{pmatrix}
 \mapsto &\; 
 \begin{pmatrix}
  1\!-\!y_3\!\cdot\!\rho_{11} &\;\; -y_3\!\cdot\!\rho_{12} &\;\; -\rho_1  \\[2pt]
 -y_3\!\cdot\!\rho_{12} &\;\; 1\!-\!y_3\!\cdot\!\rho_{22} &\;\; -\rho_2 \\[2pt]
 \rho_1 & \rho_2 & 1 
 \end{pmatrix}
 \begin{pmatrix} 
 w_1 \\ w_2 \\ w_3
 \end{pmatrix} \\[4pt]
 = &\; 
 \begin{pmatrix} 
 (1\!-\!y_3 \rho_{11})\cdot w_1 - y_3 \rho_{12}\cdot w_2 - \rho_1\cdot w_3 \\[2pt] 
 -y_3 \rho_{12}\cdot w_1 + (1\!-\!y_3 \rho_{22})\cdot w_2 - \rho_2\cdot w_3 \\[2pt] 
 \rho_1\cdot w_1 + \rho_2\cdot w_2 + w_3
 \end{pmatrix}
 =:
 \begin{pmatrix} 
 v_1 \\ v_2 \\ v_3
 \end{pmatrix}
 \end{align*}
 Restricted on the boundary $\partial\Omega$  i.e.,$\{y_3\!=\!0\}$, the map becomes 
 \begin{align*}
 \begin{pmatrix} 
 v_1 \\ v_2 \\ v_3
 \end{pmatrix}
 &=  w_1\cdot\partial_1\vec{\eta} + w_2\cdot\partial_2\vec{\eta} + w_3\cdot\vec{n} \\
 &= 
 \begin{pmatrix}
 1 &\; 0 &\; -\rho_1 \\[2pt]
 0 &\; 1 &\; -\rho_2 \\[2pt]
  \rho_1 &\;  \rho_2 & 1 
 \end{pmatrix}
 \begin{pmatrix} 
 w_1 \\ w_2 \\ w_3
 \end{pmatrix}
 =
 \begin{pmatrix} 
 w_1 - \rho_1\!\cdot\! w_3 \\[2pt] 
 w_2 - \rho_2\!\cdot\! w_3 \\[2pt] 
  \rho_1\!\cdot\! w_1 + \rho_2\!\cdot\! w_2 + w_3 
 \end{pmatrix}
 \end{align*}
 
 Now we are ready to show the key feature of the transformation $\vec{\Phi}$ -- preserving the ``specular symmetry'' on the boundary: it sends any two points $(x,v), (x,R_x v)$ on the phase boundary $\gamma=\partial\Omega\times\mathbb{R}^3$ with specular-reflection relation to two points on $\{y_3\!=\!0\}\times\mathbb{R}^3$ which are also specular-symmetric to each other. 
 In other words, we have the following commutative diagram (when $x\!\in\!\partial\Omega$  i.e.,$y_3\!=\!0$ ):
 \begin{equation*}
 \xymatrix{
 	(y,w)\ar[r]^{\vec{\Phi}^{-1}}\ar[d]_{R_y} & (x,v)\ar[d]^{R_x} \\
 	(y,R_y w)\ar[r]^{\vec{\Phi}^{-1}} & (x,R_x v)
 }
 \end{equation*}
 from which we can equivalently write
 \begin{equation*}
 A^{-1}\big(R_y w\big) = R_x\big(A^{-1}w\big),\quad \text{if}\;  y_3\!=\!0.
 \end{equation*}
 This can be verified either geometrically by noticing that\\ $\vec{n} = \partial_1\vec{\eta} \times \partial_2\vec{\eta}$, so
 \begin{align*}
 A^{-1}\big(R_y w\big) &= A^{-1}\langle w_1,w_2,-w_3\rangle = w_1\cdot\partial_1\vec{\eta} + w_2\cdot\partial_2\vec{\eta} - w_3\cdot\vec{n} \\
 &= R_x\big(w_1\cdot\partial_1\vec{\eta} + w_2\cdot\partial_2\vec{\eta} + w_3\cdot\vec{n}\big)
 = R_x\big(A^{-1}w\big)  ;
 \end{align*}
 or arithmetically using the definition $R_x v = v-2(n_x\!\cdot\! v) n_x$.
 
 Having this property, the specular reflection boundary condition on the solutions is also preserved: 
 \begin{equation*}
 \tilde{f}(t,y,w) = \tilde{f}(t,y,Rw),\quad \text{on}\;  \{y_3\!=\!0\},
 \end{equation*}
 where  $R\!\eqdef \!{\rm diag}\{1,1,-1\}$,
 which allows us to construct the mirror extension (as in the next subsection) that is consistent with this restriction (and thus is automatically satisfied). 
 
 To conclude this part, we take down more computations for later use: 
 \begin{equation}
 D\vec{\Phi}^{-1} = \left[\frac{\partial(x,v)}{\partial(y,w)}\right] = \left(
 \begin{array}{c|c}
 \frac{\partial x}{\partial y}  &  \frac{\partial x}{\partial w} \\[3pt]
 \hline \\[-12pt]
 \frac{\partial v}{\partial y}  &  \frac{\partial v}{\partial w}
 \end{array}
 \right)
 = \left(
 \begin{array}{c|c}
 A^{-1} &  0_{3\times 3} \\[1pt]
 \hline \\[-12pt]
 B & A^{-1}
 \end{array}
 \right) ,\label{DPhi}  
 \end{equation} 
 \begin{multline*}
 B \eqdef  \Big[\frac{\partial v}{\partial y}\Big] 
 \\=  \begin{pmatrix} 
 - y_3 \rho_{1\!1\!1}\!\cdot\! w_1 \!-\! y_3 \rho_{1\!1\!2}\!\cdot\! w_2  
 &\;\; - y_3 \rho_{1\!1\!2}\!\cdot\! w_1 \!-\! y_3 \rho_{1\!2\!2}\!\cdot\! w_2  
 &\;\; - \rho_{11}\!\cdot\! w_1 \!-\! \rho_{12}\!\cdot\! w_2 \\[-2pt]
 - \rho_{11}\!\cdot\! w_3 
 & - \rho_{12}\!\cdot\! w_3 
 &  \\[3pt]
 - y_3 \rho_{1\!1\!2}\!\cdot\! w_1 \!-\! y_3 \rho_{1\!2\!2}\!\cdot\! w_2  
 &\;\; - y_3 \rho_{1\!2\!2}\!\cdot\! w_1 \!-\! y_3 \rho_{2\!2\!2}\!\cdot\! w_2  
 &\;\; - \rho_{12}\!\cdot\! w_1 \!-\! \rho_{22}\!\cdot\! w_2 \\[-2pt]
 - \rho_{12}\!\cdot\! w_3 
 & - \rho_{22}\!\cdot\! w_3 
 &  \\[3pt]
 \rho_{11}\!\cdot\! w_1 + \rho_{12}\!\cdot\! w_2 
 & \rho_{12}\!\cdot\! w_1 + \rho_{22}\!\cdot\! w_2 
 & 0 
 \end{pmatrix}  ,
 \end{multline*}
 \begin{align*}
 &A =  \frac{1}{\det\!\left(A^{-1}\right)}\cdot\left(A^{-1}\right)^{\ast} \\
 &= \left[ y_3^{ 2}\!\cdot\!\left(\rho_{11}\rho_{22}\!-\!\rho_{12}^{ 2}\right) + y_3\!\cdot\!\left(2\rho_1\rho_2\rho_{12}\!-\!\rho_2^{ 2}\rho_{11}\!-\!\rho_1^{ 2}\rho_{22}\!-\!\rho_{11}\!-\!\rho_{22}\right) + \left(\rho_1^{ 2}\!+\!\rho_2^{ 2}\!+\!1\right) \right]^{-1} \\[3pt]
 &\;\cdot 
 \begin{pmatrix} 
  (1\!+\!\rho_2^{ 2}) - y_3\!\cdot\!\rho_{22} 
 &\;\; -\rho_1\rho_2 + y_3\!\cdot\!\rho_{12} 
 &\;\; \rho_1 + y_3\!\cdot\!(\rho_2\rho_{12}\!-\!\rho_1\rho_{22})  \\[5pt]
 -\rho_1\rho_2 + y_3\!\cdot\!\rho_{12} 
 &\;\; (1\!+\!\rho_1^{ 2}) - y_3\!\cdot\!\rho_{11} 
 &\;\; \rho_2  + y_3\!\cdot\!(\rho_1\rho_{12}\!-\!\rho_2\rho_{11})  \\[5pt]
 -\rho_1 + y_3\!\cdot\!(\rho_1\rho_{22}\!-\!\rho_2\rho_{12}) 
 &\;\; -\rho_2 + y_3\!\cdot\!(\rho_2\rho_{11}\!-\!\rho_1\rho_{12}) 
 & 1 - y_3\!\cdot\!(\rho_{11}\!+\!\rho_{22})  \\[-1pt]
 & & + y_3^{ 2}\!\cdot\!(\rho_{11}\rho_{22}\!-\!\rho_{12}^{ 2})
 \end{pmatrix},
 \end{align*}
  
 \begin{align*}
 C &\eqdef  A^{-T}\!A^{-1}\\
 &=  \begin{pmatrix}
 y_3^{ 2}\!\cdot\!(\rho_{11}^{ 2}\!+\!\rho_{12}^{ 2}) 
 &\;\; y_3^{ 2}\!\cdot\!\rho_{12}(\rho_{11}\!+\!\rho_{22})  
 &\;\; y_3\!\cdot\!(\rho_1\rho_{11}\!+\!\rho_2\rho_{12})  \\[-.5pt]
 - y_3\!\cdot\!2\rho_{11} + (\rho_1^{ 2}\!+\!1) 
 & - y_3\!\cdot\!2\rho_{12} + \rho_1\rho_2 
 &  \\[5pt]
 y_3^{ 2}\!\cdot\!\rho_{12}(\rho_{11}\!+\!\rho_{22}) 
 &\;\; y_3^{ 2}\!\cdot\!(\rho_{12}^{ 2}\!+\!\rho_{22}^{ 2})  
 &\;\; y_3\!\cdot\!(\rho_1\rho_{12}\!+\!\rho_2\rho_{22})  \\[-.5pt]
 - y_3\!\cdot\!2\rho_{12} + \rho_1\rho_2 
 & - y_3\!\cdot\!2\rho_{22} + (\rho_2^{ 2}\!+\!1) 
 &  \\[5pt]
 y_3\!\cdot\!(\rho_1\rho_{11}\!+\!\rho_2\rho_{12}) 
 & y_3\!\cdot\!(\rho_1\rho_{12}\!+\!\rho_2\rho_{22}) 
 & \rho_1^{ 2}\!+\!\rho_2^{ 2}\!+\!1
 \end{pmatrix}, \\[10pt]
 & C^{-1} = AA^T =  \frac{1}{\det(C)}\cdot C^{\ast}. 
 \end{align*}\begin{remark}
 	Here we just assume $\rho$ is (locally) smooth enough and its derivatives remain uniformly bounded, so that all the coefficients of transformed equations where the above matrices appear will keep roughly the same size as the original ones.
 \end{remark}
 \subsubsection{Mirror extension across the specular-reflection boundary.}

 After flattening the boundary, we then ``flip over'' $\tilde{f}$ to the upper half space by setting
 \begin{equation}
 \bar{f}(t,y',w') \stackrel{\text{def}}{=} \left\{ 
 \begin{array}{rcl}
 & \tilde{f}(t,y',w'), & \text{if}\;\; y'\!\in \overline{\mathbb{H}}_{-} \\[4pt]
 & \tilde{f}(t,Ry',Rw'),\quad & \text{if}\;\; y'\!\in \overline{\mathbb{H}}_{+}
 \end{array}\right.,
 \label{f-bar}
 \end{equation}
 where  $R\!\eqdef \!{\rm diag}\{1,1,-1\}$. Combined with the corresponding partition of unity, we are able to define our solutions in the whole space.
 \begin{remark}
  	The above construction of extension coincides with the specular reflection boundary condition, which in turn makes it a well-defined and continuous extension across the boundary. This observation suggests that, unfortunately, we cannot apply the same kind of extension to other boundary condition cases. 
 \end{remark}
 Also, it is worth pointing out the necessity of ``continuity of $\bar{f}$ across the boundary'' lies in that, on one hand, it ensures $\bar{f}$ is indeed a solution (at least) in the weak sense in the whole space (see Section \ref{Well-definedness}); on the other hand, $\bar{g}\eqdef \bar{f}^{(n)}$ will appear in the coefficients of the linearized equation through the iteration argument process, and we require some kind of continuity of the second-order coefficient for the $S^p$ estimate (see Section \ref{Continuity-Coefficients}).  
 \subsubsection{Transformed equations.} \label{Transformed-Equation}
 By using the chain rule with our definition (\ref{Phi}) and (\ref{f-trans}) of the transformation $\vec{\Phi}$, we first compute the transformed equation satisfied by $\tilde{f}$ in the lower half space:\footnote{We use the column vector convention in the following matrix operation expressions.}
 \begin{equation*} 
 \partial_t f = \partial_t \tilde{f},
 \end{equation*}
 \begin{align*} 
 v\cdot\nabla_{\!x} f &= \big(A^{-1}w\big)^T \left\{A^T  \nabla_{\!y}\tilde{f} + \big[\frac{_{\partial w}}{^{\partial x}}\big]^T \nabla_{\!w}\tilde{f}\right\} \\
 &= w^T\big(A^{-T}A^T\big)\nabla_{\!y}\tilde{f} + \big(A^{-1}w\big)^T \big(A\big[\frac{_{\partial v}}{^{\partial y}}\big]A\big)^T \nabla_{\!w}\tilde{f} \\
 &= w\cdot\nabla_{\!y} \tilde{f} + \big(ABw\big)\cdot\nabla_{\!w}\tilde{f},
 \end{align*}
 \begin{equation*} 
 a_g\cdot\nabla_{\!v}f = \widetilde{a_g}\cdot\big(A^T  \nabla_{\!w}\tilde{f} \big) = \big(A \widetilde{a_g}\big)\cdot\nabla_{\!w}\tilde{f},
 \end{equation*}and
 \begin{equation*} 
 \nabla_v\cdot\big(\sigma_{\!G}\nabla_{\!v} f\big) = \nabla_w\cdot\left(\big[A \widetilde{\sigma}_{\!G}A^T\big] \nabla_{\!w} \tilde{f} \right).
 \end{equation*}
 See (\ref{DPhi}) for explicit definition of $A, B$, and
 \begin{align*} 
 \widetilde{a_g}(t,y,w) &\eqdef  a_g\big(t,\vec{\Phi}^{-1}(y,w)\big) = a_g(t,x,v), \\
 \widetilde{\sigma}_{\!G}(t,y,w) &\eqdef  \sigma_{\!G}\big(t,\vec{\Phi}^{-1}(y,w)\big) = \sigma_{\!G}(t,x,v).
 \end{align*}
 
 Based on our construction of the extension (\ref{f-bar}), we then go on deriving the equation satisfied by $\bar{f}$ for the upper half space:
 \begin{equation*} 
 \partial_t \tilde{f} = \partial_t \bar{f}
 \end{equation*}
 \begin{equation*} 
 w\cdot\nabla_{\!y} \tilde{f} = \big(Rw'\big)^T R^T  \nabla_{\!y'}\bar{f} = w'^{ T}\big(R^T\!R^T\big)\nabla_{\!y'}\bar{f} = w'\cdot\nabla_{\!y'} \bar{f}  
 \end{equation*}
 \begin{equation*} 
 \big(ABw\big)\cdot\nabla_{\!w}\tilde{f} = \big(\bar{A}\bar{B}Rw'\big)\cdot R^T\nabla_{\!w'}\bar{f} = \big(R\bar{A}\bar{B}Rw'\big)\cdot \nabla_{\!w'}\bar{f}  
 \end{equation*}
 \begin{equation*} 
 \big(A \widetilde{a_g}\big)\cdot\nabla_{\!w}\tilde{f} = \big(\bar{A} \overline{a}_g\big)\cdot R^T\nabla_{\!w'}\bar{f} = \big(R\bar{A} \overline{a}_g\big)\cdot \nabla_{\!w'}\bar{f}  
 \end{equation*}
 \begin{equation*}  
 \nabla_w\cdot\left(\big[A \widetilde{\sigma}_{\!G}A^T\big] \nabla_{\!w} \tilde{f} \right) = \nabla_{\!w'}\cdot\Big(\big[R\bar{A} \overline{\sigma}_{\!G}\bar{A}^T\!R\big] \nabla_{\!w'} \bar{f} \Big)
 \end{equation*}
 where
 \begin{align*} 
 \bar{A}(y') &\eqdef  A(Ry') = A(y), \\
 \bar{B}(y',w') &\eqdef  B(Ry',Rw') = B(y,w),
 \end{align*}
 and $\overline{a}_g$, $\overline{\sigma}_{\!G}$ are $a_g$, $\sigma_{\!G}$ defined with $(t,y',w')$, respectively.
 
 Summing up the above computations, we now obtain that $\bar{f}$ satisfies (pointwisely) the following equation(s) in the lower and upper space, respectively:
 \begin{equation}
 \partial_t \bar{f} + w'\cdot\nabla_{\!y'} \bar{f} = \nabla_{w'}\cdot\big(\mathbb{A} \nabla_{\!w'} \bar{f} \big)+\mathbb{B}\cdot\nabla_{\!w'}\bar{f},
 \label{f-bar-eq}
 \end{equation}
 where the coefficients $\mathbb{A}$ and $\mathbb{B}$ are piecewise-defined:
 \begin{equation} 
 \mathbb{A}(t,y',w') \eqdef  \left\{ 
 \begin{array}{rcl}
 & \widetilde{\mathbb{A}} \eqdef  A \widetilde{\sigma}_{\!G}A^T, & \text{if}\;\; y'\!\in \mathbb{H}_{-} \\[4pt]
 & \overline{\mathbb{A}} \eqdef  R\bar{A} \overline{\sigma}_{\!G}\bar{A}^T\!R,\quad & \text{if}\;\; y'\!\in \mathbb{H}_{+}
 \end{array}\right.,
 \label{coeff-A}
 \end{equation}
 \begin{equation} 
 \mathbb{B}(t,y',w') \eqdef  \left\{ 
 \begin{array}{rcl}
 & \widetilde{\mathbb{B}} \eqdef  ABw' + A \widetilde{a_g}, & \text{if}\;\; y'\!\in \mathbb{H}_{-} \\[4pt]
 & \overline{\mathbb{B}} \eqdef  R\bar{A}\bar{B}Rw' + R\bar{A} \overline{a}_g,\quad & \text{if}\;\; y'\!\in \mathbb{H}_{+}
 \end{array}\right..
 \end{equation}
 
 \begin{remark}
 Thanks to our design of the form of transformation (\ref{Phi}) and extension (\ref{f-bar}), the transport operator of the equation remains {\em invariant} after change of variables, which is vital for our future analysis.
 \end{remark}
 It is also worth noting that the new second-order coefficient $\mathbb{A}$ preserves the positivity of $\sigma_{\!G}$, and thus the hypo-ellipticity of the equation, since $A$ and $R$ are non-degenerate. In fact, the second-order term would also be invariant if $A\!=\!D\vec{\phi}$ was an orthogonal matrix; i.e.,$A^T\! A\!=\!AA^T\!=\!I$, but that is not necessary in general and actually unrealizable. 

 \subsubsection{Adding the weights.}
 Since the velocity weight plays an important role in closing the $L^\infty$ esitimate, we need to consider the weighted equation (\ref{weighted-eq-h}), which has just one more first-order term. Thus we can simply replace its coefficient $a_g$ with $a_g^{ \theta}$, and the rest apply the same way as above.
 \subsubsection{Recovering the inhomogeneous term $\bar{K}_g^{ \theta} f$.}
 To avoid technical tediousness, we prefer to work in the whole space all the way to the end and obtain estimates for extended solutions to  (\ref{linearized-eq}) as well, using Duhamel's principle introduced in \eqref{Eq : Duhamel principle}. So we need to transform the term $\bar{K}_g^{ \theta} f$ also. And it is conceivable that the estimates for $\bar{K}_g^{ \theta} f$ after transformation should still hold valid as in Lemma 2.9 of \cite{2016arXiv161005346K}, which is all that desired by later arguments.
 
 \subsection{Well-definedness of extended solutions in the whole space} \label{Well-definedness}
 After doing the extension, it is important to make sure that across the boundary the equation(s) (\ref{f-bar-eq}) are satisfied by $\bar{f}$ in some proper sense (at least in the weak sense). That means $\bar{f}$ should satisfy the following weak formation of equation (\ref{f-bar-eq}) in the whole space: 
 \begin{align*} 
 & \iint_{\mathbb{R}^3\!\times\mathbb{R}^3} \left[(\bar{f}\varphi)(t)-(\bar{f}\varphi)(0) \right] dy'dw' \\
 & = \int_0^t\!\iint_{\mathbb{R}^3\!\times\mathbb{R}^3} \Big\{\bar{f}  \big[(\partial_s+w'\!\cdot\!\nabla_{\!y'})\varphi-\mathbb{B}\cdot\nabla_{\!w'}\varphi\big] - \nabla_{\!w'}\bar{f}\cdot(\mathbb{A} \nabla_{\!w'}\varphi) \Big\} dy'dw'ds
 \end{align*}
 
 Normally a weak formation is obtained by multiplying the equation by some suitable test function $\varphi$ and then integrating by parts over the domain where the equation(s) are defined i.e.,$(0,t)\times(\mathbb{H}_{-}\!\cup\mathbb{H}_{+})\times\mathbb{R}^3$. This process yields  
 \begin{align*} 
 & \iint_{\widetilde{\Omega}\times\mathbb{R}^3} \left[(\bar{f}\varphi)(t)-(\bar{f}\varphi)(0) \right] dy'dw' \\
 & = \int_0^t\!\iint_{\widetilde{\Omega}\times\mathbb{R}^3} \Big\{\bar{f}  \big[(\partial_s+w'\!\cdot\!\nabla_{\!y'})\varphi-\mathbb{B}\cdot\nabla_{\!w'}\varphi\big] - \nabla_{\!w'}\bar{f}\cdot(\mathbb{A} \nabla_{\!w'}\varphi) \Big\} dy'dw'ds \\
 &\quad -\int_0^t\!\!\int_{\tilde{\gamma}} \bar{f}\varphi  d\tilde{\gamma} ds
 \end{align*}
 Here  $\widetilde{\Omega} \eqdef  \mathbb{H}_{-}\!\cup\mathbb{H}_{+}$, $\tilde{\gamma} \eqdef  \partial\widetilde{\Omega}\times\mathbb{R}^3 = (\partial\mathbb{H}_{-}\!\cup\partial\mathbb{H}_{+})\times\mathbb{R}^3$, and $d\tilde{\gamma} \eqdef  (w'\!\cdot n_{y'}) dS_{y'}dw'$.
 \begin{remark}
 	 	The only boundary-integral term $I_{\tilde{\gamma}} \eqdef  \int_0^t\!\int_{\tilde{\gamma}} \bar{f}\varphi  d\tilde{\gamma} ds$ above comes from integration by parts in $y'$. Note that integration by parts in $w'$ does not produce any boundary terms. 
 \end{remark}
 
 Compared with the above definition, this is equivalent to saying that we have to be sure the boundary term vanishes:
 \begin{equation*}
 \int_{\tilde{\gamma}} \bar{f}\varphi  d\tilde{\gamma} = \left(\iint_{\partial\mathbb{H}_{-}\!\times\mathbb{R}^3} \!+ \iint_{\partial\mathbb{H}_{+}\!\times\mathbb{R}^3}\right) \bar{f}\varphi  (w'\!\cdot n_{y'}) dS_{y'}dw' = 0,
 \end{equation*}
 which is indeed true since 
 \begin{equation*}
 \bar{f}(t ;y_1',y_2',0- ;w') = \bar{f}(t ;y_1',y_2',0+ ;w')
 \end{equation*}
 due to continuity of $\bar{f}$ across the boundary, while the normal vectors at same point of outer and inner boundary are of opposite directions
 \begin{equation*}
 n_{y'}(y_3'\!=\!0-)|_{\partial\mathbb{H}_{-}} = - n_{y'}(y_3'\!=\!0+)|_{\partial\mathbb{H}_{+}},
 \end{equation*}
 plus the coincidence of $y'$-derivative term (transport operator) on two sides.
 
 Therefore, we can now conclude that $\bar{f}$ is a (weak) solution to the equation (\ref{f-bar-eq}) in the whole space.
 
 \subsection{Continuity of the coefficients across the boundary} \label{Continuity-Coefficients}
 The $L^\infty$ estimate, H\"{o}lder estimate, and $S^p$ estimate are based on a reformulation (\ref{linearized-eq}) or (\ref{weighted-eq}) of the linearized equation, which is of the form of a class of kinetic Fokker-Planck equations (also called hypoelliptic   or ultraparabolic   of Kolmogorov type) with rough coefficients (see \cite{golse2016harnack}): 
 \begin{equation*}
  \partial_t f + v\cdot\nabla_{\!x} f = \nabla_v\cdot(\mathbf{A} \nabla_{\!v} f )+\mathbf{B}\cdot\nabla_{\!v}f + \mathbf{C}f  .
 \end{equation*}
 
 The properties of the coefficients used in \cite{2016arXiv161005346K} for the estimates to hold are as follows: if $\|g\|_{\infty}$ is sufficiently small, \\[3pt] 
 $\mathbf{A}(t,x,v)\eqdef \sigma_{\!G}:\ 3\!\times\!3$ non-negative matrix, but \textit{not uniformly} elliptic, $0<(1\!+\!|v|)^{-3}I \lesssim \mathbf{A}(v) \lesssim (1\!+\!|v|)^{-1}I$\; (Lemma 2.4 in \cite{2016arXiv161005346K}); uniformly H\"{o}lder continuous if $g$ is so\; (Lemma 7.5 in \cite{2016arXiv161005346K}). \\[3pt]
 $\mathbf{B}(t,x,v)\eqdef a_g$ : \;essentially bounded $3d$-vector, $\|\mathbf{B}[g]\|_{\infty}\lesssim \|g\|_{\infty}^{2/3}\ll 1$\; (Appendix A in \cite{golse2016harnack}). \\[3pt]
 $\mathbf{C}\eqdef \bar{K}_g$ :  $L^\infty \!\rightarrow\! L^\infty$\! operator, $\|\mathbf{C}\|_{L^\infty \!\rightarrow L^\infty}\!\lesssim\! 1$ \;(Lemma 2.9 in \cite{2016arXiv161005346K}).
 
 The ellipticity and boundedness of the new coefficients after extension are easy to check (look back the transformed equations in Section \ref{Transformed-Equation}). 
 
 So we are left with one main task -- checking the H\"{o}lder continuity of the second-order coefficient $\mathbb{A}$ across the boundary, which is necessary only for the $S^p$ estimate (see Theorem 7.2 and Lemma 7.5 in \cite{2016arXiv161005346K}). 
 A direct computation on (\ref{coeff-A}) gives
 \begin{align*} 
 \widetilde{\sigma}_{\!G} &= \left|\det\!\left(A^{-1}\right)\right|\cdot \widetilde{\phi} \ast \left(\tilde{\mu}\!+\!\tilde{\mu}^{1\!/2}\tilde{g}\right),  \\[2pt]
 \phi(v) &= |v|^{-1}\!\cdot\! I - |v|^{-3}\!\cdot\!\left(vv^T\right), \\[2pt]
 \widetilde{\phi}(y,w) &= (w^T\!A^{-T}\!A^{-1}w)^{-1\!/2}\!\cdot\! I - (w^T\!A^{-T}\!A^{-1}w)^{-3/2}\!\cdot\!\left[A^{-1}ww^TA^{-T}\right], \\
 &= (w^T\! Cw)^{-1\!/2}\!\cdot\! I - (w^T\! Cw)^{-3/2}\!\cdot\!\left[A^{-1}ww^TA^{-T}\right],\ \text{and}\  \\[2pt]
 \tilde{\mu}(y,w) &= e^{-w^T\! Cw},\quad \bar{\mu}(y',w') = e^{-w'^T\!R \bar{C}Rw'}.
 \end{align*}
 Then we have
 \begin{align*}
 \widetilde{\mathbb{A}} &= A \widetilde{\sigma}_{\!G}A^T \\ 
 &= \left|\det\!\left(A^{-1}\right)\right|\!\cdot \Big\{(w^T\! Cw)^{-1\!/2}\!\cdot\! \left[AA^T\right] - (w^T\! Cw)^{-3/2}\!\cdot\!\left[ww^T\right]\!\Big\}
 \ast \left(\tilde{\mu}\!+\!\tilde{\mu}^{1\!/2}\tilde{g}\right), \\[3pt]
 \overline{\mathbb{A}} &= R\bar{A} \overline{\sigma}_{\!G}\bar{A}^T\!R \\ 
 &= \left|\det\!\left(\bar{A}^{-1}\right)\right|\!\cdot \Big\{(w'^T\!R \bar{C}Rw')^{-1\!/2}\!\cdot\! \left[R\bar{A}\bar{A}^T\!R\right] - (w'^T\!R \bar{C}Rw')^{-3/2}\!\cdot\!\left[w'\!w'^T\right]\!\Big\} \\
 &\qquad\qquad\qquad \ast \left(\bar{\mu}\!+\!\bar{\mu}^{1\!/2}\bar{g}\right).
 \end{align*}
 Since $\mathbb{A}$ is already H\"{o}lder continuous in both lower and upper spaces, it suffices to ensure that it is continuous across $\{y_3'\!=\!0\}$:
 \begin{equation*}
 \widetilde{\mathbb{A}} (t ;y_1',y_2',0- ;w') = \overline{\mathbb{A}} (t ;y_1',y_2',0+ ;w'),
 \end{equation*}
 for which we only need to verify
 \begin{enumerate}
 	\item[(1)] $A (y_1',y_2',0-) = \bar{A} (y_1',y_2',0+)$
 	\item[(2)] For $\lambda(y,w) \eqdef  w^T\! Cw,\; \bar{\lambda}(y',w') \eqdef  w'^T\!R \bar{C}Rw'$, 
 	$$\lambda (y_1',y_2',0- ;w') = \bar{\lambda} (y_1',y_2',0+ ;w') $$
 	 
 	\item[(3)] For $\Lambda(y) \eqdef  AA^T = C^{-1},\; \bar{\Lambda}(y') \eqdef  R\bar{A}\bar{A}^T\!R = R\bar{C}^{-1}\!R$, 
 	$$\Lambda (y_1',y_2',0-) = \bar{\Lambda} (y_1',y_2',0+) .$$
 \end{enumerate}
 The first claim is quite straightforward from the expression of $A$, while the latter two are less obvious. But it should become clear by writing out the matrices $C$ and $C^{-1}$, with the observation that there are no constant terms for $y_3$ in four entries $c_{13}, c_{23}, c_{31}, c_{32}$ of matrix $C$, so that at $y_3\!=\!0$ we have $c_{13}\!=\!c_{23}\!=\!c_{31}\!=\!c_{32}\!=\!0$. This means $C$ will remain unchanged after both left-multiplying and right-multiplying by an $R$. The same argument applies to $C^{-1}$ (corresponding to $\Lambda$), which also reads (when $y_3\!=\!0$)
 \begin{equation*}
 C^{-1}|_{y_3=0} 
 = \left(
 \begin{array}{c c|c}
 &  \;&\; 0 \\[-1pt]
 &  \;&\; 0 \\
 \hline 
 0 &\; 0 \;&\; \ast
 \end{array}
 \right)
 = R\bar{C}^{-1}\!R |_{y_3=0}.
 \end{equation*}
 
 Of course, we also need the assumption that $\bar{g}$ is H\"{o}lder continuous, which will be satisfied by applying the H\"{o}lder estimate to $\bar{g}\!\eqdef \!\bar{f}^{(n)}$ in the iteration process.

\begin{remark}
	The H\"{o}lder continuity of $\mathbb{A}$ across the boundary plays a key role in successfully closing our arguments. It is not trivial at all and again due to our particular choice of the transformation as well as the structure of the Landau collision kernel.
\end{remark}
 
 \begin{remark}
 	However, the first-order coefficient $\mathbb{B}$ is actually discontinuous at the boundary $\{y_3'\!=\!0\}$. Fortunately we do not require this condition in our case thanks to the presence of a higher-order term (cf. the one for the \textit{Boltzmann equation} \cite{MR2679358} and \cite{MR3592757}).
 \end{remark} 

 \subsection{Modifications of the original proofs}
 With everything prepared, we finally provide the specific ways to modify the original proofs in Section 5 - 8 of \cite{2016arXiv161005346K} for the whole space problem: 
 \begin{itemize}
 	\item[$\bullet$] The arguments in Section 5 - 8 still hold for the whole-space case by replacing each $\mathbb{T}^3$ with $\mathbb{R}^3$, as the original De Giorgi-Nash-Moser iteration for kinetic Fokker-Planck equation in \cite{mouhot2015holder} also works for the choice of cylinders $Q_n\eqdef [-t_n,0]\times \rth\times B(0;R_n)$.
 	\item[$\bullet$] For the local $L^2\!\rightarrow\!L^\infty$ estimate in Section 5.1, we can just let the cutoff function be independent of $x$, since only the truncation in $v$ is essentially required.
 	\item[$\bullet$] That being so, we also need another version of Lemma 5.8 with cylinders $Q_n$ nested in all three variables as in Section 6 (see Lemma 11 of \cite{golse2016harnack}) to deduce Lemma 6.5.
 	\item[$\bullet$] After obtaining the estimates and well-posedness results for $\bar{f}$ in the whole space, we come back to $f$ in the end by taking restrictions.
 \end{itemize}
 
 \subsection{Conclusion: applicability of the previous arguments to the whole-space case}
 By designing a suitable ``boundary-flattening'' transformation, we are able to extend our solutions to the whole space for the specular reflection boundary condition case while preserving the form of transformed equation to the largest extent. Now that the well-definedness of extended solutions and conditions on the coefficients of new equation are checked, we can conclude that the whole-space problem fits into the existing $L^2\!\rightarrow\!L^\infty$ framework without obstacles. This allows us to apply various techniques in the paper \cite{2016arXiv161005346K} and finally go back to obtain the desired results for our bounded-domain problem.

  \section*{Acknowledgements} Yan Guo's research is supported in part by NSF grants DMS-1611695, DMS-1810868, Chinese NSF grant 10828103, BICMR.  Hyung Ju Hwang's research is supported by the Basic Science Research Program through the National Research Foundation of Korea NRF-2017R1E1A1A03070105. Jin Woo Jang's research is supported by the Korean IBS project IBS-R003-D1.

\bibliographystyle{hplain}

\bibliography{Landau}{}

\end{document}